\journalname{Statistics and Computing}
\newcommand{\sfC}{{\mathsf C}}
\def\tn{\theta}   % les vecteurs de poids normalises
\def\otn{{\overline{\theta}}} 
\def\tu{\tilde \theta}  % les non normalises
\def\tstar{\theta_\star} % le vecteur limite, normalise
\def\pas{\gamma}
\newcommand{\un}{\mathbf{1}}
\def\tv{\mathrm{TV}} 
\def\eqsp{\;}
\def\Xset{\mathsf{X}}
\def\rmd{\mathrm{d}}
\newcommand{\R}{{\mathbb R}}
\newcommand{\N}{{\mathbb N}}
\newcommand{\hatH}{\widehat{H}}
\renewcommand{\P}{{\mathbb P}}
\def\E{{\mathbb E}}
\def\S{S}
\newcommand{\dps}{\displaystyle}
\renewcommand{\leq}{\leqslant}
\renewcommand{\geq}{\geqslant}
\hfill\mbox{\rule{2 true mm}{3 true mm}}\vskip 2 ex\noindent}
\newcounter{hypoconbis}
\newcounter{saveconbis}
\newcommand\debutA{\begin{list} {\textbf{A\arabic{hypoconbis}}}{\usecounter{hypoconbis}}\setcounter{hypoconbis}{\value{saveconbis}}}
\newcommand\finA{\end{list}\setcounter{saveconbis}{\value{hypoconbis}}}
\newtheorem{algo}{Algorithm}
\begin{document}

\title{Self-Healing Umbrella Sampling
\thanks{This work is supported by the European Research Council under
  the European Union's Seventh Framework Programme (FP/2007-2013) /
  ERC Grant Agreement number 614492 and by the French National
  Research Agency under the grant ANR-12-BS01-0019 (STAB).
  We also benefited from the scientific environment of the Laboratoire International Associ\'e between the Centre National de la Recherche Scientifique and the University of Illinois at Urbana-Champaign.}
}
\subtitle{Convergence and efficiency}
%\titlerunning{Short form of title}        % if too long for running head

\author{Gersende Fort \and Benjamin Jourdain \and Tony Leli\`evre \and Gabriel Stoltz}
\authorrunning{G. Fort, B. Jourdain, T. Leli\`evre and G. Stoltz} 

\institute{ 
  G. Fort \at LTCI, CNRS \& Telecom Paris Tech
  46, rue Barrault \\
  75634 Paris Cedex 13 \\
  France \\
  \email{gersende.fort@telecom-paristech.fr}          
  \and 
  B. Jourdain, T. Leli\`evre and G. Stoltz \at
  Universit\'e Paris-Est, CERMICS (ENPC), INRIA \\
  F-77455 Marne-la-Vall\'ee \\
  France
  \email{\{jourdain,lelievre,stoltz\}@cermics.enpc.fr}          
}

\date{Received: date / Accepted: date}
% The correct dates will be entered by the editor

\maketitle

\begin{abstract}
The Self-Healing Umbrella Sampling (SHUS) algorithm is an adaptive biasing
algorithm which has been proposed in~\cite{SHUS} in order to
efficiently sample a multimodal probability measure. We show that
this \linebreak method can be seen as a variant of the well-known Wang-Landau
algorithm~\cite{wang-landau-01,wang-landau-01-PRL}. Adapting
results on the convergence of the Wang-Landau
algorithm obtained in~\cite{fort:jourdain:kuhn:lelievre:stoltz:2014}, we prove the
convergence of the SHUS algorithm. We also compare the two methods in
terms of efficiency. We finally propose a modification of the SHUS algorithm in order to increase its efficiency, and exhibit some similarities of SHUS with the well-tempered metadynamics method~\cite{barducci-bussi-parrinello-08}.

\keywords{Wang-Landau algorithm \and Stochastic Approximation
  Algorithm \and Free energy biasing techniques}
% \PACS{PACS code1 \and PACS code2 \and more}
% \subclass{MSC code1 \and MSC code2 \and more}
\end{abstract}

%-------------------------------------------------
\section{Introduction}
\label{intro}

The efficient sampling of a probability measure defined over a high
dimensional space is required in many
application fields, such as computational statistics or molecular
dynamics~\cite{lelievre-rousset-stoltz-book-10}. Standard algorithms consist in building a dynamics which is
ergodic with respect to the target distribution such as Langevin
dynamics~\cite{lelievre-rousset-stoltz-book-10,roberts-tweedie-96} or Metropolis-Hastings dynamics~\cite{MRRTT53,Hastings70}. Averages over trajectories
of this ergodic dynamics are then used as approximations of averages
with respect to the
target probability measure. In many cases of interest, this probability measure is
multimodal: regions of high probability are separated by regions of
low probability, and this implies that the ergodic dynamics is
metastable. This means that it takes a lot of time to leave a high probability region
(called a metastable state). The consequence of this metastable
behavior is that trajectorial averages converge very slowly to their
ergodic limits.

Many techniques have been proposed to overcome these
difficulties. Among them, importance sampling consists in modifying the target
probability using a well-chosen bias in
order to enhance the convergence to equilibrium. Averages with respect
to the original target are then recovered using a reweighting of the
biased samples. In general, it is not
easy to devise an appropriate bias. Adaptive importance sampling
methods have thus been proposed in order to automatically build a ``good'' bias
(see~\cite[Chapter~5]{lelievre-rousset-stoltz-book-10} for a review of these approaches).

Let us explain the principle of adaptive biasing techniques in a
specific setting (we refer to~\cite[Chapter
5]{lelievre-rousset-stoltz-book-10} for a more general introduction to
such methods). To fix
the ideas, let us consider a target probability measure $\pi \, \rmd \lambda$ on the state space $\Xset \subseteq
\R^D$, where $\lambda$ denotes the
Lebesgue measure on $\R^D$, and let us be given a partition $\Xset_1, \cdots, \Xset_d$ of
$\Xset$ into $d$ subsets. The subsets $\Xset_i$ are henceforth called 'strata'.  The choice of such a partition will be
discussed later (see Footnote~\ref{foot} below).
We assume that the target measure is multimodal in
the sense that the weights of the strata span several orders of magnitude. In other
words, the weights of the strata
\begin{equation}
  \label{eq:def:thetastar}
  \tstar(i) =  \int_{\Xset_i} \pi(x) \, \rmd \lambda(x), \quad  i=1,\ldots,d
\end{equation}
vary a lot and $\frac{\max_{1 \leq i \leq d}   \tstar(i)}{
  \min_{1 \leq i \leq d}   \tstar(i)}$ is large. We
suppose from now on and without loss of generality (up to removing some strata) that $\min_{1 \leq i \leq d}
\tstar(i) > 0$.  In such a situation,
it is natural to consider the following biased probability density:
\begin{equation}\label{eq:def:pithetastar}
\pi_{\tstar}(x)=  \frac{1}{d}  \sum_{i=1}^d  \frac{\pi(x)}{\tstar(i)} \un_{\Xset_i}(x) \eqsp,
\end{equation}
which is such that 
\begin{equation}
  \label{eq:uniform_strata}
  \int_{\Xset_i} \pi_{\tstar}(x) \, \rmd \lambda(x) = \frac{1}{d}
\end{equation}
for all $i \in \{1, \ldots, d\}$: under the biased
probability measure $\pi_{\tstar} \, \rmd \lambda$, each stratum has the same weight. In particular,
the ergodic dynamics which are built with
$\pi_{\tstar}$ as the target measure
are typically  less metastable than the dynamics with  target
$\pi$. The practical difficulty to implement this technique is of course that the vector
\[
\tstar= \Big\{\tstar(1), \ldots, \tstar(d) \Big\}
\]
is unknown. The principle of
adaptive biasing methods is  to learn on the fly the vector
$\tstar$ in order to eventually sample the biased probability measure
$\pi_{\tstar}$. Adaptive algorithms thus build a sequence of vectors
$(\tn_n)_{n \geq 0}$ which is expected to converge to $\tstar$.
Various updating procedures have been proposed~\cite[Chapter~5]{lelievre-rousset-stoltz-book-10}. Such
adaptive techniques are used on a daily basis by
many practitioners in particular for free energy computations in computational statistical
physics. In this context, the partition of $\Xset$ is related to the
choice of a so-called reaction coordinate\footnote{\label{foot}For a given measure
  $\pi \, \rmd \lambda$, the choice of a
  good partition or, equivalently, of a good reaction coordinate does
  of course influence the efficiency of the algorithm. This choice is a
  difficult problem that we do not consider here (see for
  example~\cite{chopin-lelievre-stoltz-12} for such discussions 
  in the context of computational statistics). Here, both $\pi \, \rmd
  \lambda$ and $\Xset_1, \cdots, \Xset_d$ are assumed to be given.}, and the weights
$\tstar$ give the free energy profile associated with this reaction coordinate.
 We focus here on a specific adaptive biasing method called the Self-Healing Umbrella Sampling (SHUS)
technique~\cite{SHUS,dickson-legoll-lelievre-stoltz-fleurat-lessard-10}. We
will show that it is a variant of the well-known Wang-Landau
method~\cite{wang-landau-01,wang-landau-01-PRL}. From a practical
viewpoint, the main interest of SHUS compared to Wang-Landau is that
the practitioner has less numerical parameters to tune (as will be
explained in Section~\ref{sec:refshuswl}).

The aim of this paper is to analyze the SHUS algorithm in terms of
convergence and efficiency. First, we adapt the results
of~\cite{fort:jourdain:kuhn:lelievre:stoltz:2014} which prove the
convergence of Wang-Landau to obtain the
convergence of SHUS (see Theorem~\ref{theo:cvgshus}). Second, we
perform numerical experiments to analyze the efficiency of SHUS, in
the spirit of~\cite{amrx} where similar numerical tests are performed for the
Wang-Landau algorithm. The efficiency analysis consists in estimating the average
exit time from a metastable state in the limit when $\frac{\max_{1 \leq i \leq d} \tstar(i)}{
  \min_{1 \leq i \leq d}   \tstar(i)}$ goes to infinity. Adaptive
techniques (such as SHUS or Wang-Landau) yield exit times which are
much smaller than for the original non-adaptive dynamics.

The main output of this work is that, both in terms of convergence (longtime behavior) and
efficiency (exit times from metastable states), SHUS is essentially equivalent to 
the Wang-Landau algorithm for a
specific choice of the numerical parameters. These numerical
parameters are not the optimal ones in terms of efficiency and we propose
in Section~\ref{sec:acc} a modified SHUS algorithm which is (in the longtime limit) equivalent
to the Wang-Landau algorithm with better sets of parameters.

\medskip

This article is organized as follows. In Section~\ref{sec:1}, we introduce the SHUS algorithm, check its asymptotic correctness and explain how it can be seen as a Wang-Landau algorithm with stochastic stepsize sequence. In Section 3, we state a convergence result for Wang-Landau algorithms with general (either deterministic or stochastic) stepsize sequences and deduce the convergence of SHUS. The proofs are based on stochastic approximation arguments and 
postponed to Section~\ref{sec:proofs}. Numerical results illustrating
the efficiency of the algorithm and comparing its performance with the
standard Wang-Landau algorithm are provided in
Section~\ref{sec:numerical}. Finally, in Section~\ref{sec:SHUS_WL}, we draw some conclusions on the
interest of SHUS compared with the Wang-Landau algorithm, 
and further compare SHUS with other adaptive techniques, such as the well-tempered metadynamics algorithm~\cite{barducci-bussi-parrinello-08} and the above-mentionned modified SHUS algorithm. We also prove the convergence of this modified SHUS algorithm (see Proposition \ref{propshusa}), and present numerical results showing that this new method is closely related to a Wang-Landau dynamics with larger stepsizes.

\section{The SHUS algorithm}
\label{sec:1}

Using the notation of the introduction, we consider a target probability measure $\pi \, \rmd \lambda$ on the state space $\Xset \subseteq
\R^D$ and a partition of $\Xset$ into $d$ strata $\Xset_1, \cdots, \Xset_d$.

We introduce a family of biased densities $\pi_\tn$, for $\tn \in \Theta$,
where $\Theta$ is the set of positive probability measures on $\{1, \cdots, d \}$:
\[
\Theta = \left\{ 
\tn = (\tn(1), \cdots, \tn(d)) \in (0,1)^d, \ \ \sum_{i=1}^d \tn(i) = 1 \right\} \eqsp.
\]
The biased
densities $(\pi_\tn)_{\tn \in \Theta}$ are obtained from
$\pi$ by a
reweighting of each stratum:
\begin{equation}
  \label{eq:def:PiTheta}
 \pi_\tn(x) =  \left( \sum_{j=1}^d \frac{\tstar(j)}{\tn(j)} \right)^{-1}  \sum_{i=1}^d \frac{\pi(x)}{\tn(i)} \un_{\Xset_i}(x)\eqsp,
\end{equation}
where $\theta_\star \in \Theta$ is defined by~\eqref{eq:def:thetastar}.
Observe that for any $\tn \in \Theta$ and $i \in \{1, \ldots, d \}$,
\begin{equation}
  \label{eq:PiThetaStratum}
  \int_{\Xset_i} \pi_\tn(x) \,\rmd \lambda(x) = \frac{\tstar(i)/\tn(i)}{\sum_{j=1}^d \tstar(j)/\tn(j)}.
\end{equation}
Equations~\eqref{eq:def:pithetastar} and~\eqref{eq:uniform_strata} are
respectively Equations~\eqref{eq:def:PiTheta}
and~\eqref{eq:PiThetaStratum} with the specific choice $\theta=\theta_\star$.

\subsection{Description of the algorithm}\label{secdesal}

As explained above, the principle of many adaptive biasing techniques, and SHUS in particular, is to build a sequence $(\theta_n)_{n
  \geq 1}$ which converges to $\theta_\star$. This allows to 
sample $\pi_{\tstar}$, which is less multimodal than $\pi$. In order to understand how the
updating rule for $\theta_n$ is built for SHUS, one may proceed as follows.

Let us first assume that we are given a Markov chain $(X_n)_{n \geq 0}$
which is ergodic with respect to the target measure $\pi \, \rmd
\lambda$ (think of a Metropolis-Hastings dynamics). Let us introduce
the sequence (for a given $\gamma > 0$)
  \begin{align}
    \label{eq:def:thetatilde1}
  \tu_{n+1}(i)& = \tu_n(i) + \pas \, \un_{\Xset_i}(X_{n+1})  \\ \nonumber
& = \begin{cases}\tu_{n}(i)+{\pas} \mbox{ if }X_{n+1}\in \Xset_i \eqsp,\\
\tu_{n}(i) \mbox{ otherwise \eqsp,}\end{cases} 
  \end{align}
which, in some sense, counts the
number of visits to each stratum.
By the ergodic property, 
it is straightforward to check that
$       \tn_n = \frac{\tu_n}{\sum_{j=1}^d \tu_n(j)}$
converges almost surely (a.s.) to $\tstar$ as $n \to \infty$. As explained in the introduction, the difficulty with this algorithm is
that the convergence of $\tn_n$ to $\tstar$ is very slow due to the
metastability of the density $\pi$ and thus of the Markov chain $(X_n)_{n \geq 0}$.

The idea is then that if  an estimate  $\otn$ of $\tstar$ is available, one
should instead consider a Markov chain $(X_n)_{n \geq 0}$ which is
ergodic with respect to $\pi_{\otn}$ and thus hopefully less metastable. To
estimate $\tstar$ with this new Markov chain, one should
modify the updating rule~\eqref{eq:def:thetatilde1} as
  \begin{equation}
    \label{eq:def:thetatilde2}
  \tu_{n+1}(i) = \tu_n(i) + \pas \, \otn(i) \un_{\Xset_i}(X_{n+1})
  \end{equation}
in order to unbias the samples $(X_n)_{n \geq 0}$ (since
$\frac{\pi(x)}{\pi_{\overline{\theta}}(x)} = \left( \sum_{j=1}^d \frac{\tstar(j)}{\otn(j)} \right)\sum_{i=1}^d \otn(i) \un_{\Xset_i}(x)$). Again, by the
ergodic property, one easily gets that $\tn_n = \frac{\tu_n}{\sum_{j=1}^d \tu_n(j)}$ converges a.s. to $\tstar$ as $n \to \infty$. Indeed, $n^{-1}\sum_{k=1}^{n}\un_{\Xset_i}(X_k)$ converges a.s. to
$\int_{\Xset_i} \pi_\otn(x) \,\rmd \lambda(x)$ (given by (\ref{eq:PiThetaStratum})) as $n\to\infty$. Since
\begin{equation}
  \label{eq:RecurrenceTildeTheta}
  \frac{\tu_n(i)}{n}=\frac{\tu_0(i)}{n}+\pas \frac{\otn(i)}{n} \sum_{k=1}^{n}\un_{\Xset_i}(X_k) \eqsp, 
\end{equation}
$(\tu_n(i)/n)_{n \geq 0}$ converges a.s. to $\frac{\pas
  \tstar(i)}{\sum_{j=1}^d\tstar(j)/\otn(j)}$, which implies in turn 
\[
\lim_{n\to+\infty} \frac{1}{n} \sum_{i=1}^d \tu_n(i)=\frac{\pas}{\sum_{j=1}^d \tstar(j)/\otn(j)} \quad \mathrm{a.s.}
\]
Hence  $       \tn_n = \frac{\tu_n}{\sum_{j=1}^d \tu_n(j)}$
converges a.s. to $\tstar$ as $n \to \infty$.

The SHUS algorithm consists
in using the current
value~$\theta_n$ as the estimate $\otn$ of $\tstar$ in the previous algorithm. Let us now precisely define the SHUS algorithm.  Let
$(P_\tn)_{\tn \in \Theta}$ be a family of transition kernels on~$\Xset$ which are ergodic with respect to $\pi_\tn$. In particular, 
\[
\forall \tn \in \Theta, \quad \pi_\tn P_\tn = \pi_\tn.
\] Let
$\pas>0$, $X_0 \in \Xset$ and
$\tu_0=(\tu_0(1),\ldots,\tu_0(d))\in(\R_+^*)^d$ be deterministic.
The SHUS algorithm consists in iterating the following steps:
\begin{algo}\label{algoshus}
 Given $(\tu_n,X_n)\in
(\R_+^*)^d\times \Xset$,
\begin{itemize}
   \item compute %% the normalizing constant 
     %% \begin{equation}
     %%   \label{eq:def:bigthetatilde}
     %%   \tilde{\Theta}_n \eqdef \sum_{i=1}^d \tilde{\theta}_n(i)
     %% \end{equation}
     %% and
     the probability measure on $\{1,\ldots,d\}$,
     \begin{equation}
       \label{eq:def:theta}
       \tn_n = \frac{\tu_n}{\sum_{j=1}^d \tu_n(j)} \in \Theta\eqsp,
     \end{equation}
   \item draw $X_{n+1}$ according to the kernel $P_{\tn_n}(X_{n},\cdot)$,
   \item compute, for all $i \in \{1, \ldots d\}$,
  \begin{equation}
    \label{eq:def:thetatilde}
  \tu_{n+1}(i) = \tu_n(i) + \pas \,  \tn_n(i) \un_{\Xset_i}(X_{n+1})
  \eqsp .
  \end{equation}
\end{itemize}
\end{algo}
Notice that only simulation under $P_\tn$ is needed to implement SHUS. By
choosing $P_\tn$ as a Metropolis-Hastings kernel with target measure $\pi_\tn\rmd \lambda$, 
the density $\pi_\tn$ needs to be known only up to
a normalizing constant. Therefore, SHUS covers the case when the
target measure $\pi \, \rmd
\lambda$ is only known up to a multiplicative constant
which is generally the case in view of
applications to Bayesian statistics and statistical physics.  

As proved in Section~\ref{sec:CvgResults}, $(\tn_n)_{n \geq 0}$
converges almost surely to $\tstar$ when $n \to \infty$. According to the update mechanism~(\ref{eq:def:thetatilde}),
$\tu_{n+1}(i) - \tu_n(i)$ is non negative, and it is 
positive if and only if the current draw $X_{n+1}$ lies in stratum $i$.
In addition, this variation is proportional to the current approximation
$\tn_n(i)$ of $\tstar(i)$ with a factor $\pas$ chosen by the user (prior to the
run of the algorithm; the choice of~$\pas$ is numerically investigated 
in Section~\ref{sec:numerical}). 

The principle of this algorithm is
thus to penalize the already visited strata, in order to favor
transitions towards unexplored strata. The penalization strength is
proportional to the current bias of the strata. The prefactor
$\theta_n(i)$ in~\eqref{eq:def:thetatilde} can be understood as a way
to unbias the samples $(X_n)_{n \geq 0}$ in order to recover samples
distributed according to the target measure $\pi \, \rmd \lambda$, see also formula~\eqref{theo:ergoLLN}
  below.

\subsection{Asymptotic correctness}
\label{sec:asymptotic_consistency}

Let us consider
the SHUS algorithm~\ref{algoshus} and let us assume that $(\tn_n)_{n \geq 0}$ converges to some value, say
$\otn \in \Theta$. It is then expected that $n^{-1} \sum_{k=1}^{n}\un_{\Xset_i}(X_k)$
converges to $\int_{\Xset_i} \pi_\otn(x) \, \rmd\lambda(x)$ and that the
updating rule~\eqref{eq:def:thetatilde} leads to the same asymptotic behavior
as~\eqref{eq:def:thetatilde2} (where $\theta_n(i)$ in~\eqref{eq:def:thetatilde} has been replaced by its
limit $\otn(i)$). Thus, it is expected that $\lim_n
{n}^{-1}\tu_n=\frac{\pas \tstar}{\sum_{j=1}^d\tstar(j)/\otn(j)}$ a.s. by
(\ref{eq:RecurrenceTildeTheta}) and thus $\lim_n \tn_n=\tstar$ a.s.. Therefore, the only possible limit of the sequence $(\tn_n)_{n \geq 0}$
is $\tstar$.

This heuristic argument is of course not a proof of convergence,
but it explains why one
can expect the SHUS algorithm to behave like a Metropolis-Hastings
algorithm with target measure $\pi_{\tstar}$. The rigorous result
for the convergence is given in Section~\ref{sec:CvgResults}, and the efficiency of
SHUS is discussed in Section~\ref{sec:numerical}.

\subsection{Reformulation as a Wang-Landau algorithm with a stochastic
  stepsize sequence}\label{sec:refshuswl}

One key observation of this work is that SHUS can be seen as a Wang-Landau algorithm with nonlinear update of the
weights and with a specific stepsize sequence $(\gamma_n)_{n \geq
  1}$, see~\cite{wang-landau-01,wang-landau-01-PRL,fort:jourdain:kuhn:lelievre:stoltz:2014,amrx}. The
Wang-Landau algorithm with nonlinear update of the
weights consists in replacing the updating formula~\eqref{eq:def:thetatilde} by:
\begin{align}
    \label{eq:def:thetatilde_WL}
  \tu^{\rm WL}_{n+1}(i) = \tu^{\rm WL}_n(i) \Big(1 + \gamma^{\rm WL}_{n+1} \un_{\Xset_i}(X_{n+1})\Big),
  \end{align}
where the deterministic stepsize sequence $(\gamma^{\rm WL}_n)_{n \geq 1}$ has to be
{\em chosen} by the practitioner beforehand. The choice of this sequence is not
easy: it should converge to zero when $n$ goes to infinity (vanishing
adaption) in order to ensure the convergence of the sequence
$(\tu^{\rm WL}_{n})_{n \geq 0}$, but not too fast otherwise the convergence
of $\tn^{\rm WL}_n=   \frac{\tu^{\rm WL}_n}{\sum_{i=1}^d \tu^{\rm WL}_n(i)}$ to $\tn_\star$ is
not ensured. 

\begin{remark}
In fact, the updating rule of the original Wang-Landau algorithm is more complicated 
than~\eqref{eq:def:thetatilde_WL} since the stepsizes are changed at random 
stopping times related to a quasi-uniform population of the strata and
not at every iteration, see~\cite{JR11} for a mathematical
analysis of the well-posedness of the algorithm.
\end{remark}  

Going back to SHUS, by setting
\begin{equation}
  \label{eq:def:stepsize}
  \gamma_{n+1} =\frac{{\pas}}{\sum_{j=1}^d \tu_n(j)} \eqsp, \qquad \text{ for $n\in\N$} \eqsp,
\end{equation}
it is easy to check that \eqref{eq:def:thetatilde} is equivalent to
\begin{equation}
\label{eq:SHUS_WL}
  \tu_{n+1}(i) = \tu_n(i) \Big(1 + \pas_{n+1} \un_{\Xset_i}(X_{n+1})\Big),  
\end{equation}
which explains why SHUS can be seen as a Wang-Landau algorithm with nonlinear update of the
weights (see~\eqref{eq:def:thetatilde_WL}) but with a stepsize sequence $(\gamma_n)_{n \geq 1}$ 
which is not chosen by the practitioner: it is adaptively built by the algorithm.

%%%%%%%%%%%%%%%%%%%%%%%%%%%%%%

\section{Convergence results}
%%%%%%%%%%%%%%%%%%%%%%%%%%%%%%
\label{sec:CvgResults}
The goal of this section is to establish convergence results on the sequence
$(\tn_n)_{n \geq 0}$ given by (\ref{eq:def:theta}) to the weight vector $\tstar$ and on
the distribution of the samples $(X_n)_{n \geq 0}$. To do so, we will extend the results of~\cite{fort:jourdain:kuhn:lelievre:stoltz:2014} and more generally prove convergence of the Wang-Landau algorithm with general (either deterministic or stochastic) stepsize sequence $(\gamma_n)_{n \geq 1}$ and the nonlinear update of the weights.
We need the following assumptions on the target density $\pi$ and the kernels
 $P_\theta$:
\debutA
\item \label{hyp:targetpi} The density $\pi$ of the target
  distribution is such that $0 < \inf_{\Xset}\pi \leq \sup_\Xset \pi <
  \infty$ and the strata $(\Xset_i)_{i \in \{1, \ldots ,d\}}$ satisfy $\min_{1 \leq i \leq d }
  \lambda(\Xset_i)> 0$.  \finA 

\noindent
Notice that this assumption implies that
  $\tstar$ given by~(\ref{eq:def:thetastar}) is such that $\min_{1 \leq i \leq
    d }\tstar(i)>0$ (hence $\tstar \in \Theta$).

\debutA
\item \label{hyp:kernel} For any $\tn \in \Theta$, $P_\tn$ is a
  Metropolis-Hastings transition kernel with proposal kernel $q(x,y)\,\rmd
  \lambda(y)$ where $q(x,y)$ is symmetric and satisfies $\inf_{\Xset^2} q > 0$,
  and with invariant distribution $\pi_\tn \, \rmd \lambda$, where $\pi_\tn$ is
  given by (\ref{eq:def:PiTheta}).  \finA 
\subsection{Convergence of Wang-Landau algorithms with
  a general stepsize sequence}
\label{sec:cvgWL}

In~\cite{fort:jourdain:kuhn:lelievre:stoltz:2014}, we consider the Wang-Landau
algorithm with a linear update of the weights. The linear update version of
Wang-Landau consists in changing the updating rule~\eqref{eq:def:thetatilde_WL}
to a linearized version (in the limit $\gamma^{\rm WL}_{n+1}\to 0$) on the normalized weights:
\begin{align}
\tn^{\rm WL}_{n+1}(i)&=\tn^{\rm WL}_n(i) \label{eq:linear_update}\\
&\quad+\gamma^{\rm WL}_{n+1}\tn^{\rm WL}_n(i) \left( \un_{\Xset_i}(X_{n+1}) -
  \tn^{\rm WL}_n(I(X_{n+1}))\right). \nonumber
\end{align}
 We prove in~\cite{fort:jourdain:kuhn:lelievre:stoltz:2014} that, when the target density $\pi$ and the kernels
 $P_\theta$ satisfy A\ref{hyp:targetpi} and A\ref{hyp:kernel},  the Wang-Landau algorithm with this linear update of
 the weights converges under the following
condition on $(\gamma^{\rm WL}_n)_{n \geq 1}$: the sequence $(\gamma^{\rm WL}_n)_{n \geq 1}$ is deterministic, ultimately non-increasing,
$$\sum_{n \geq 1} \gamma^{\rm WL}_n =
+\infty\mbox{ and }\sum_{n\geq 1}(\gamma^{\rm WL}_n)^2< +\infty.$$ 

To the best of knowledge, neither the convergence of the Wang-Landau algorithm with the nonlinear update of the weights~\eqref{eq:def:thetatilde_WL} nor the case of a random sequence of stepsizes are addressed in the literature. It is a particular case of the following Wang-Landau algorithm with general stepsize sequence which also generalizes SHUS: starting from random variables $\tu_0=(\tu_0(1),\ldots,\tu_0(d))\in(\R_+^*)^d$ and $X_0\in\Xset$, iterate the following steps:
\begin{algo}\label{algogen}
 Given $(\tu_n,X_n)\in
(\R_+^*)^d\times \Xset$,
\begin{itemize}
   \item compute %% the normalizing constant 
     %% \begin{equation}
     %%   \label{eq:def:bigthetatilde}
     %%   \tilde{\Theta}_n \eqdef \sum_{i=1}^d \tilde{\theta}_n(i)
     %% \end{equation}
     %% and
     the probability measure on $\{1,\ldots,d\}$,
     \begin{equation}
       \label{eq:def:thetagen}
       \tn_n = \frac{\tu_n}{\sum_{j=1}^d \tu_n(j)} \in \Theta\eqsp,
     \end{equation}
   \item draw $X_{n+1}$ according to the kernel $P_{\tn_n}(X_{n},\cdot)$,
   \item compute, for all $i \in \{1, \ldots d\}$,
  \begin{equation}
    \label{eq:def:thetatildegen}
  \tu_{n+1}(i) = \tu_n(i)(1+\gamma_{n+1}\un_{\Xset_i}(X_{n+1}))
  \eqsp ,
  \end{equation}
\end{itemize}
where the positive stepsize sequence $(\gamma_{n})_{n\geq 1}$ is supposed to be predictable with respect to the filtration ${\cal F}_n=\sigma(\tu_0,X_0,X_1,\ldots,X_n)$ (i.e. $\gamma_{n}$ is ${\cal F}_{n-1}$-measurable).
\end{algo}
Algorithm~\ref{algogen} is a meta-algorithm: to obtain a practical algorithm, one has to specify the way the stepsize sequence $(\gamma_{n})_{n\geq 1}$ is generated.
\begin{definition}\label{defconvalgo}
  An algorithm of the type described in Algorithm~\ref{algogen} is said to converge if it satisfies the following properties:
  \begin{enumerate}[(i)]
  \item \label{theo:cvg:theta:item1} $\dps \P\left( \lim_{n \to +\infty} \tn_n
      = \tstar\right)=1$.
\item \label{theo:cvg:theta:item2} For any bounded measurable function $f$ on $\Xset$,
\begin{align}
    \lim_{n\to\infty} \E\left[f(X_n)\right] & = \int_\Xset f(x) \, \pi_{\tn_\star}(x) \, 
 \rmd \lambda( x)
    \eqsp, \notag \\
    \lim_{n\to\infty}  \frac{1}{n} \sum_{k=1}^n f(X_k) & = \int_\Xset
    f(x) \, \pi_{\tn_\star}(x) \, \rmd\lambda( x)  \quad \mathrm{a.s.}\eqsp \nonumber
  \end{align}
\item \label{theo:cvg:theta:item3} For any bounded measurable function $f$ on $\Xset$,
\begin{align}
  & \lim_{n\to\infty} \E\left[d \sum_{j=1}^d  \theta_{n-1}(j)\,f(X_n)
    \un_{\Xset_j}(X_n)\right]  \nonumber \\
  & \qquad \qquad = \int_\Xset f(x) \, \pi(x)
  \, \rmd \lambda(x)
  \eqsp,  \nonumber  
\\
  &  \lim_{n\to\infty}\frac{d}{n} \sum_{k=1}^n \sum_{j=1}^d \theta_{k-1}(j) \, \un_{\Xset_j}(X_k)
  f(X_k)  \nonumber  \\
  & \qquad \qquad = \int_\Xset f(x) \, \pi(x)\, \rmd\lambda( x) \quad \mathrm{a.s.} 
  \label{theo:ergoLLN}
  \end{align}
  \end{enumerate}
\end{definition}
In addition to the almost sure convergence of $(\tn_n)_{n \geq 0}$ to $\tstar$, this definition encompasses the ergodic convergence
of the random sequence $(X_n)_{n \geq 0}$ to $\pi_{\tn_\star} \, \rmd \lambda$ and the convergence of an importance sampling-type Monte Carlo average to the
probability measure~$\pi \, \rmd \lambda$.
Our main result is the following Theorem.
\begin{theorem}
  \label{theo:cvggal}
  Assume A\ref{hyp:targetpi}, A\ref{hyp:kernel} and that there exists a deterministic sequence $(\overline{\gamma}_n)_n$ converging to $0$ such that
\begin{equation}\label{eq:stepsize:assumptions}
\P\left(\forall n\geq 1,\;\gamma_n\leq \overline{\gamma}_n,\;\sum_n \gamma_n = +\infty \eqsp,\;\sum_n \gamma_n^2 < \infty\right)=1,
\end{equation}and the stepsize sequence $(\gamma_n)_{n \geq 1}$ is a.s. non-increasing. Then Algorithm~\ref{algogen} converges in the sense of Definition~\ref{defconvalgo}. 
\end{theorem}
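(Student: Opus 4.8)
The plan is to recast Algorithm~\ref{algogen} as a stochastic approximation recursion on the simplex $\Theta$ and to run it through the Lyapunov / controlled-Markov-chain machinery of~\cite{fort:jourdain:kuhn:lelievre:stoltz:2014}, the only genuinely new feature being the random stepsize. First I would derive the dynamics of $\tn_n$ from the weight update~\eqref{eq:def:thetatildegen}. Writing $S_n=\sum_{j=1}^d\tu_n(j)$ and letting $I(X_{n+1})$ denote the index of the stratum containing $X_{n+1}$, one finds $S_{n+1}=S_n\bigl(1+\gamma_{n+1}\tn_n(I(X_{n+1}))\bigr)$, hence
\[
\tn_{n+1}(i)=\tn_n(i)\,\frac{1+\gamma_{n+1}\un_{\Xset_i}(X_{n+1})}{1+\gamma_{n+1}\tn_n(I(X_{n+1}))}\eqsp.
\]
As the denominator lies in $(1,1+\gamma_{n+1})$, an exact first-order expansion gives
\[
\tn_{n+1}(i)=\tn_n(i)+\gamma_{n+1}H_i(\tn_n,X_{n+1})+\gamma_{n+1}^2\,\rho_{n+1}(i)\eqsp,
\]
with $H_i(\tn,x)=\tn(i)\bigl(\un_{\Xset_i}(x)-\tn(I(x))\bigr)$ and a remainder $\rho_{n+1}$ bounded uniformly in $n$. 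Crucially, $H$ is exactly the field driving the linear update~\eqref{eq:linear_update}: the nonlinearity contributes only the second-order term $\gamma_{n+1}^2\rho_{n+1}$, which is summable since $\sum_n\gamma_n^2<\infty$ a.s. This reduces the nonlinear update to the linear one up to a negligible perturbation.

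The averaged field is $h_i(\tn)=\int H_i(\tn,\cdot)\,\pi_\tn\,\rmd\lambda$, and using~\eqref{eq:PiThetaStratum} together with $\sum_j\tstar(j)=1$ a short computation yields
\[
h_i(\tn)=\frac{\tstar(i)-\tn(i)}{\sum_{j=1}^d\tstar(j)/\tn(j)}\eqsp,
\]
whose unique zero on $\Theta$ is $\tstar$, and for which the relative entropy $V(\tn)=\sum_i\tstar(i)\log(\tstar(i)/\tn(i))$ is a strict Lyapunov function (a Cauchy--Schwarz argument shows $\langle\nabla V(\tn),h(\tn)\rangle<0$ for $\tn\neq\tstar$). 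Since the field and the Lyapunov function coincide with those of the linear update, I would import the corresponding ODE stability statement from~\cite{fort:jourdain:kuhn:lelievre:stoltz:2014}. To control the noise $H_i(\tn_n,X_{n+1})-h_i(\tn_n)$ I would use that A\ref{hyp:targetpi}--A\ref{hyp:kernel} make $(P_\tn)$ uniformly geometrically ergodic on compact subsets of $\Theta$: since $\inf_\Xset\pi>0$ and $\int\pi\,\rmd\lambda=1$ force $\lambda(\Xset)<\infty$, the minorisation $\inf q>0$ together with the bounds on $\pi$ gives a Doeblin condition as soon as the acceptance ratios $\tn(I(x))/\tn(I(y))$ stay bounded, i.e. $\tn$ remains in a compact subset of $\Theta$. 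On such compacts the Poisson equation $\hatH_i(\tn,\cdot)-P_\tn\hatH_i(\tn,\cdot)=H_i(\tn,\cdot)-h_i(\tn)$ admits a solution $\hatH_i$ that is bounded and Lipschitz in $\tn$, uniformly.

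The core of the argument, and its main obstacle, is then to push the stochastic approximation analysis through with a \emph{random} stepsize. Using the Poisson solution I would split the noise into a martingale increment $M_{n+1}=\hatH(\tn_n,X_{n+1})-P_{\tn_n}\hatH(\tn_n,X_n)$ plus telescoping remainders. The decisive point is that $\gamma_{n+1}$ is $\mathcal{F}_n$-measurable: by predictability it factors out of conditional expectations exactly like a deterministic coefficient, so $\sum_n\gamma_{n+1}M_{n+1}$ is a martingale with conditional second moments bounded by $C\sum_n\gamma_{n+1}^2<\infty$ a.s., hence convergent; the a.s. domination $\gamma_n\leq\overline{\gamma}_n\to0$ and the monotonicity of $(\gamma_n)$ make the telescoping and second-order sums converge as well, while $\sum_n\gamma_n=+\infty$ a.s. provides the persistence of excitation needed to reach $\tstar$. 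Before any of this, however, one must secure stability, namely that $\tn_n$ a.s. stays in a compact subset of $\Theta$ bounded away from $\partial\Theta$, so that the uniform ergodicity above applies. I expect this non-degeneracy estimate --- the interplay between the stabilising drift ($h_i>0$ whenever $\tn(i)<\tstar(i)$) and the random stepsize --- to be the most delicate part, to be handled by adapting the stability lemmas of~\cite{fort:jourdain:kuhn:lelievre:stoltz:2014} to predictable stepsizes dominated by $\overline{\gamma}_n$.

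Finally I would deduce the two sampling statements of Definition~\ref{defconvalgo}. The ergodic convergence of $(X_n)$ to $\pi_{\tstar}\,\rmd\lambda$ follows from the a.s. convergence $\tn_n\to\tstar$ combined with an ergodic theorem for the time-inhomogeneous chain driven by the slowly varying kernels $P_{\tn_n}\to P_{\tstar}$, again along the lines of~\cite{fort:jourdain:kuhn:lelievre:stoltz:2014}. The importance-sampling statement is then a corollary: applying the previous convergence to the bounded function $g(x)=d\,\tstar(I(x))f(x)$ and using the identity $\tstar(I(x))\,\pi_{\tstar}(x)=\pi(x)/d$ gives $\int g\,\pi_{\tstar}\,\rmd\lambda=\int f\,\pi\,\rmd\lambda$, while replacing $\tstar$ by $\tn_{n-1}$ in the prefactor produces an error controlled by $\max_i|\tn_{n-1}(i)-\tstar(i)|\to0$.
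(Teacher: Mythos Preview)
Your high-level strategy---recasting the update as a stochastic-approximation recursion with field $H_i(\theta,x)=\theta(i)(\un_{\Xset_i}(x)-\theta(I(x)))$, mean field $h(\theta)=(\tstar-\theta)/\sum_j\tstar(j)/\theta(j)$, Lyapunov function $V$, and Poisson-equation noise control exploiting the $\mathcal{F}_n$-predictability of $\gamma_{n+1}$---is exactly the paper's. The logical ordering of stability versus noise control, however, has a genuine gap. You claim uniform geometric ergodicity of $P_\theta$ only on compact subsets of $\Theta$ and therefore want to secure stability \emph{first} so that the Poisson bounds (boundedness and Lipschitz regularity of $\hatH_\theta$) apply. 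But you give no independent route to full stability ($\inf_n\min_i\theta_n(i)>0$ a.s.): the ``stability lemmas'' of~\cite{fort:jourdain:kuhn:lelievre:stoltz:2014} you plan to adapt yield only a \emph{recurrence} property (Proposition~\ref{prop:stabilite} here: $\limsup_k\underline{\theta}_{T_k-1}>0$ along return times to the smallest-weight stratum), and upgrading recurrence to stability via~\cite[Theorem~2.2]{andrieu:moulines:priouret:2005} already \emph{requires} the noise condition. Your ordering is therefore circular.

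The paper breaks this circle with a stronger input you have underestimated: Proposition~\ref{prop:ergounif} (taken from~\cite[Proposition~3.1]{fort:jourdain:kuhn:lelievre:stoltz:2014}) gives geometric ergodicity uniform over \emph{all} of $\Theta$, not merely on compacts, whence $\sup_{\theta\in\Theta}\sup_{x\in\Xset}|\hatH_\theta(x)|<\infty$. For the $\theta$-regularity remainder the paper likewise bypasses any compactness hypothesis by exploiting the multiplicative form~\eqref{eq:theta:update:multiplicatif} to obtain $\bigl|1-\theta_{n+1}(i)/\theta_n(i)\bigr|\vee\bigl|1-\theta_n(i)/\theta_{n+1}(i)\bigr|\leq\gamma_{n+1}$ for every $\theta_n\in\Theta$; combined with Lipschitz bounds on $\theta\mapsto P_\theta\hatH_\theta$ expressed through these ratios (rather than through $|\theta-\theta'|/\min_i\theta(i)$), this controls the regularity term uniformly. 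With these uniform estimates the noise condition is established \emph{before} any stability information, and stability then follows from recurrence plus the noise condition via~\cite[Theorem~2.2]{andrieu:moulines:priouret:2005}. Once you reverse your order and invoke the uniform-in-$\Theta$ ergodicity, the remainder of your outline (including the derivations of items~(ii) and~(iii)) goes through essentially as written.
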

One immediately deduces convergence of the Wang-Landau algorithm with determistic stepsize sequence and nonlinear update~\eqref{eq:def:thetatilde_WL} under the same assumptions as for the linear update~\eqref{eq:linear_update}, which generalizes the result of~\cite{fort:jourdain:kuhn:lelievre:stoltz:2014}.
\begin{corollary}
   Assume A\ref{hyp:targetpi}, A\ref{hyp:kernel} and that the deterministic sequence $(\gamma^{\rm WL}_n)_{n \geq 1}$ is non-increasing and such that $\sum_{n\geq 1}\gamma^{\rm WL}_n=+\infty$ and $\sum_{n\geq 1}(\gamma^{\rm WL}_n)^2<\infty$. Then the Wang-Landau algorithm with nonlinear update~\eqref{eq:def:thetatilde_WL} converges in the sense of Definition \ref{defconvalgo}. 
\end{corollary}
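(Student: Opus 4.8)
The plan is to recognize the nonlinear Wang-Landau algorithm as a particular instance of the meta-algorithm described in Algorithm~\ref{algogen} and then to invoke Theorem~\ref{theo:cvggal} directly. First I would note that the update rule~\eqref{eq:def:thetatilde_WL} is formally identical to the generic update~\eqref{eq:def:thetatildegen} once one sets $\gamma_n = \gamma^{\rm WL}_n$ for every $n \geq 1$: the two share the same normalization $\tn_n = \tu_n / \sum_j \tu_n(j)$ and the same sampling step $X_{n+1} \sim P_{\tn_n}(X_n, \cdot)$. Moreover, since the sequence $(\gamma^{\rm WL}_n)_{n \geq 1}$ is deterministic, each $\gamma^{\rm WL}_n$ is trivially ${\cal F}_{n-1}$-measurable, so the predictability requirement built into Algorithm~\ref{algogen} is automatically met. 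Hence the nonlinear Wang-Landau dynamics is exactly Algorithm~\ref{algogen} run with the deterministic stepsize sequence $(\gamma^{\rm WL}_n)_{n \geq 1}$.

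Next I would verify the hypotheses of Theorem~\ref{theo:cvggal}. Assumptions A\ref{hyp:targetpi} and A\ref{hyp:kernel} are assumed in the statement. For the stepsize condition~\eqref{eq:stepsize:assumptions}, I would take the dominating deterministic sequence to be $\overline{\gamma}_n = \gamma^{\rm WL}_n$ itself, so that $\gamma_n \leq \overline{\gamma}_n$ holds trivially with equality. The summability $\sum_n (\gamma^{\rm WL}_n)^2 < \infty$ forces $\gamma^{\rm WL}_n \to 0$, hence $\overline{\gamma}_n \to 0$; the conditions $\sum_n \gamma_n = +\infty$ and $\sum_n \gamma_n^2 < \infty$ hold deterministically and therefore with probability one; and the sequence is non-increasing by hypothesis, a fortiori almost surely. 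All hypotheses being satisfied, Theorem~\ref{theo:cvggal} yields that Algorithm~\ref{algogen}, i.e. the nonlinear Wang-Landau algorithm, converges in the sense of Definition~\ref{defconvalgo}.

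Since each verification above is immediate, there is no real obstacle: the entire mathematical content of the corollary is carried by Theorem~\ref{theo:cvggal}, and the only points worth flagging are the (trivial) remarks that a deterministic sequence is automatically predictable and may serve as its own deterministic majorant $\overline{\gamma}_n$ in~\eqref{eq:stepsize:assumptions}.
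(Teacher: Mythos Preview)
Your proposal is correct and matches the paper's approach exactly: the paper presents this corollary as an immediate consequence of Theorem~\ref{theo:cvggal}, and your verification that a deterministic non-increasing square-summable sequence with divergent sum trivially satisfies~\eqref{eq:stepsize:assumptions} (with $\overline{\gamma}_n=\gamma^{\rm WL}_n$) is precisely the intended one-line deduction.
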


Before outlining the proof of Theorem \ref{theo:cvggal}, let us discuss its application to the SHUS algorithm.

\subsection{Convergence of SHUS}
The stepsize sequence $\left(\gamma_n\right)_{n\geq 1}=\left(\frac{1}{\sum_{i=1}^d\tu_{n-1}(i)}\right)_{n\geq 1}$ obtained in the reformulation of the SHUS algorithm~\ref{algoshus} as a Wang-Landau algorithm is clearly decreasing since, the sequence $(\sum_{i=1}^d\tu_n(i))_{n\geq 0}$ is increasing. To apply Theorem \ref{theo:cvggal}, we also need to check \eqref{eq:stepsize:assumptions}. This is the purpose of the following proposition which is proved in Section~\ref{sec:proofs}.
\begin{proposition}\label{lem:bounds:BigThetaTilde} 
With probability one, the stepsize sequence $(\gamma_n)_{n \geq 1}$ in the SHUS algorithm~\ref{algoshus} is decreasing and for any $n \in \N$,
\[
\forall n\in\N,\;\frac{\gamma_1}{1+n \gamma_1} \leq \gamma_{n+1} \leq \frac{\gamma_1}{
  \sqrt{ 1+2n \gamma_1\min_{1\leq i\leq d}\tn_0(i)}} \eqsp.
\]
Moreover, under A\ref{hyp:targetpi} and A\ref{hyp:kernel}, there exists a random variable $\sfC$ such that
\[
\P\left(\sfC >0 \ \mathrm{and} \ \sup_{n\in\N} n^{\frac{1+\sfC}{2}} \gamma_{n+1}
< \infty \right)=1.
\]
\end{proposition}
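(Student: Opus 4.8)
The plan is to reduce everything to the growth of the partition function $\S_n=\sum_{j=1}^d\tu_n(j)$, since by~\eqref{eq:def:stepsize} one has $\gamma_{n+1}=\pas/\S_n$, so that upper (resp.\ lower) bounds on $\gamma_{n+1}$ are exactly lower (resp.\ upper) bounds on $\S_n$. The monotonicity is immediate: because the strata partition $\Xset$, exactly one indicator $\un_{\Xset_i}(X_{n+1})$ equals one, so summing~\eqref{eq:def:thetatilde} over $i$ gives $\S_{n+1}=\S_n+\pas\,\tn_n(I(X_{n+1}))$, where $I(X_{n+1})$ is the index of the stratum containing $X_{n+1}$ and $0<\pas\,\tn_n(I(X_{n+1}))\le\pas$; hence $\S_n$ is strictly increasing and $\gamma_{n+1}=\pas/\S_n$ is decreasing. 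The same identity yields $\S_n\le \S_0+n\pas$, which rearranges into the lower bound $\gamma_{n+1}\ge\gamma_1/(1+n\gamma_1)$ after substituting $\gamma_1=\pas/\S_0$. For the deterministic upper bound I would square the recursion: $\S_{n+1}^2=\S_n^2+2\pas\,\tu_n(I(X_{n+1}))+(\pas\tn_n(I(X_{n+1})))^2\ge \S_n^2+2\pas\min_i\tu_0(i)$, using $\S_n\tn_n=\tu_n$ and $\tu_n\ge\tu_0$ componentwise. Since $\min_i\tu_0(i)=\S_0\min_i\tn_0(i)$, telescoping gives $\S_n^2\ge \S_0^2+2n\pas\,\S_0\min_i\tn_0(i)$, which is exactly the claimed upper bound on $\gamma_{n+1}$.

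For the final ``moreover'' statement, the deterministic bound only gives $\gamma_{n+1}=O(n^{-1/2})$, which is not enough (it fails $\sum\gamma_n^2<\infty$, the hypothesis needed to invoke Theorem~\ref{theo:cvggal}); the core difficulty is to show that $\S_n$ grows strictly faster than $\sqrt n$. The key observation I would exploit is that, since exactly one stratum is updated at each step, the multiplicative form~\eqref{eq:SHUS_WL} makes the product $\Pi_n=\prod_{i=1}^d\tu_n(i)$ satisfy the clean recursion $\Pi_{n+1}=\Pi_n(1+\gamma_{n+1})$, whence $\Pi_n=\Pi_0\prod_{k=1}^n(1+\gamma_k)$. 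On the other hand each factor is controlled a priori by $\tu_n(i)\le\S_n\le \S_0+n\pas$, so $\Pi_n\le(\S_0+n\pas)^d$. Combining the two and taking logarithms gives $\sum_{k=1}^n\log(1+\gamma_k)\le d\log(\S_0+n\pas)-\log\Pi_0$.

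Finally I would convert this into a pointwise bound. Using $\log(1+x)\ge x/(1+x)\ge x/(1+\gamma_1)$ for $0\le x\le\gamma_1$ (valid since $(\gamma_k)$ is decreasing, so $\gamma_k\le\gamma_1$) turns the last inequality into $\sum_{k=1}^n\gamma_k=O(\log n)$; and since $(\gamma_k)$ is decreasing, summing over the upper half of the indices gives $\tfrac{n}{2}\gamma_{n+1}\le\sum_{k=1}^{n+1}\gamma_k=O(\log n)$, i.e.\ $\gamma_{n+1}=O(\log n/n)$. Consequently $n^{(1+\sfC)/2}\gamma_{n+1}\to0$ for any fixed $\sfC\in(0,1)$, which proves the statement with a bound that is in fact deterministic, a fortiori the announced random one. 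The only genuine obstacle is the middle step: recognizing that the telescoping product $\Pi_n$, squeezed between its exact value $\Pi_0\prod(1+\gamma_k)$ and the a priori polynomial bound $(\S_0+n\pas)^d$, forces $\sum\gamma_k$ to be logarithmic; this neatly bypasses any ergodicity or minorization estimate for the kernels $P_\tn$, and everything else is routine manipulation of the recursions.
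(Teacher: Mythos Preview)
Your treatment of the deterministic bounds (monotonicity of $S_n$, $S_n\le S_0+n\pas$ for the lower bound on $\gamma_{n+1}$, and $S_{n+1}^2\ge S_n^2+2\pas\min_i\tu_0(i)$ for the upper bound) is exactly the argument in the paper.

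For the ``moreover'' part you take a genuinely different route. The paper relies on the recurrence property of Proposition~\ref{prop:stabilite}: under A\ref{hyp:targetpi}--A\ref{hyp:kernel} the return times $T_k$ to the stratum of smallest weight satisfy $T_k\le C_T k$ for a random $C_T$, and the multiplicative update at those times forces $\underline{\tu}_{T_k}\ge\underline{\tu}_0\prod_{j\le k}(1+\gamma_{T_j})$, from which one deduces $\underline{\tu}_n\gtrsim n^{1/C_T}$ and then $S_n^2\gtrsim n^{1+1/C_T}$, giving the random exponent $\sfC=1/C_T$. Your argument instead observes that the product $\Pi_n=\prod_i\tu_n(i)$ satisfies the exact recursion $\Pi_{n+1}=(1+\gamma_{n+1})\Pi_n$, while the trivial bound $\tu_n(i)\le S_n\le S_0+n\pas$ gives $\Pi_n\le(S_0+n\pas)^d$; squeezing yields $\sum_{k\le n}\log(1+\gamma_k)=O(\log n)$, hence $\sum_{k\le n}\gamma_k=O(\log n)$ and, by monotonicity, $\gamma_{n+1}=O(\log n/n)$. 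This is correct, purely algebraic, and strictly sharper than the paper's estimate: it gives a \emph{deterministic} bound that works for every $\sfC\in(0,1)$ and makes no use of A\ref{hyp:targetpi}--A\ref{hyp:kernel}. What the paper's approach buys is that Proposition~\ref{prop:stabilite} is needed anyway to prove convergence (Theorem~\ref{theo:cvggal}), so the authors get the stepsize estimate essentially for free once that machinery is in place; your shortcut sidesteps the dynamics entirely but of course does not replace the recurrence argument where it is genuinely required.
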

Since $\gamma_1=\frac{\gamma}{\sum_{i=1}^d \tu_0(i)}$ where $\tu_0$ is deterministic, combining Theorem \ref{theo:cvggal} and Proposition \ref{lem:bounds:BigThetaTilde}, we obtain the following convergence result for the SHUS algorithm~\ref{algoshus}.
\begin{theorem}\label{theo:cvgshus}
 Under A\ref{hyp:targetpi} and A\ref{hyp:kernel}, the SHUS algorithm~\ref{algoshus} converges in the sense of Definition \ref{defconvalgo}.
\end{theorem}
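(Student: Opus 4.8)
The plan is to deduce the theorem by verifying that the stochastic stepsize sequence $(\gamma_n)_{n\geq 1}$ arising from the SHUS reformulation~\eqref{eq:def:stepsize} satisfies every hypothesis of Theorem~\ref{theo:cvggal}, and then to invoke that theorem directly. Since Proposition~\ref{lem:bounds:BigThetaTilde} has already established the crucial quantitative control on $(\gamma_n)_{n\geq 1}$, the argument reduces to a short bookkeeping step matching its conclusions to the three requirements in~\eqref{eq:stepsize:assumptions} together with the monotonicity assumption. First I would record that $\gamma_1 = \gamma/\sum_{i=1}^d \tu_0(i)$ is a deterministic positive constant, because $\tu_0$ is deterministic by hypothesis in Algorithm~\ref{algoshus}; this is what allows the random bounds of Proposition~\ref{lem:bounds:BigThetaTilde} to be dominated by a deterministic sequence.

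Next I would check the three summability/growth conditions. The monotonicity requirement is immediate: Proposition~\ref{lem:bounds:BigThetaTilde} states that $(\gamma_n)_{n\geq 1}$ is decreasing almost surely, so it is in particular a.s. non-increasing. For the divergence $\sum_n \gamma_n = +\infty$, I would use the lower bound $\gamma_{n+1}\geq \frac{\gamma_1}{1+n\gamma_1}$ from the proposition; since $\gamma_1>0$ is a constant, the comparison series $\sum_n \frac{\gamma_1}{1+n\gamma_1}$ behaves like $\sum_n \frac1n$ and diverges, forcing $\sum_n \gamma_n=+\infty$ with probability one. For the square-summability $\sum_n \gamma_n^2<\infty$ and the domination by a deterministic vanishing sequence $(\overline\gamma_n)_n$, I would invoke the second part of Proposition~\ref{lem:bounds:BigThetaTilde}: there is a random $\sfC>0$ with $\sup_{n} n^{(1+\sfC)/2}\gamma_{n+1}<\infty$ a.s., which gives $\gamma_{n+1}=O(n^{-(1+\sfC)/2})$ and hence $\gamma_{n+1}^2 = O(n^{-(1+\sfC)})$, a summable sequence. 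For the deterministic envelope, the simplest choice is $\overline\gamma_n = \gamma_1/\sqrt{1+2(n-1)\gamma_1 m}$ with $m=\min_{1\leq i\leq d}\tn_0(i)$; note that $\tn_0=\tu_0/\sum_j\tu_0(j)$ is deterministic, so $m$ is a deterministic positive constant and $\overline\gamma_n\to 0$, while the upper bound in Proposition~\ref{lem:bounds:BigThetaTilde} gives $\gamma_{n+1}\leq \overline\gamma_{n+1}$ surely.

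The one subtlety to handle carefully is that in~\eqref{eq:stepsize:assumptions} the three properties must hold on a single almost-sure event, and the domination $\gamma_n\leq\overline\gamma_n$ must hold for \emph{all} $n\geq 1$ against a \emph{deterministic} sequence $(\overline\gamma_n)_n$. This is exactly why the deterministic value of $\tu_0$ (and hence of $\gamma_1$ and of $m$) is essential: it lets the envelope $\overline\gamma_n$ be deterministic rather than random, so that $\gamma_n\leq\overline\gamma_n$ is a sure inequality coming from the sure upper bound in Proposition~\ref{lem:bounds:BigThetaTilde}, while divergence and square-summability each hold on probability-one events whose intersection is again of probability one. I do not anticipate a genuine obstacle here, since Proposition~\ref{lem:bounds:BigThetaTilde} was designed precisely to supply these bounds; the main work of the proof is already discharged in that proposition, and the present theorem is its corollary. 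Once~\eqref{eq:stepsize:assumptions} and the monotonicity are verified, Theorem~\ref{theo:cvggal} applies verbatim and yields convergence of SHUS in the sense of Definition~\ref{defconvalgo}.
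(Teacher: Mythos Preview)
Your proposal is correct and follows exactly the approach the paper intends: the paper's own argument for Theorem~\ref{theo:cvgshus} is simply the one-line observation that, since $\gamma_1=\gamma/\sum_{i=1}^d\tu_0(i)$ with $\tu_0$ deterministic, one can combine Theorem~\ref{theo:cvggal} with Proposition~\ref{lem:bounds:BigThetaTilde}. Your write-up spells out precisely this combination---using the deterministic upper bound for the envelope $\overline\gamma_n$, the deterministic lower bound for divergence of $\sum_n\gamma_n$, and the stochastic upper bound $\gamma_{n+1}=O(n^{-(1+\sfC)/2})$ for square-summability (the deterministic envelope alone decays only like $n^{-1/2}$ and would not suffice here)---and correctly notes why determinism of $\tu_0$ is essential.
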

\begin{remark}
Since the sequence $((X_n,\tu_n))_{n\geq 0}$ generated by the SHUS algorithm is a Markov chain, one easily deduces that the SHUS algorithm started from a random initial condition $(X_0,\tu_0)\in\Xset\times({\mathbb R}_+^*)^d$ also converges in the sense of Definition~\ref{defconvalgo}. 
\end{remark}

The convergence result from Theorem~\ref{theo:cvgshus} allows us to characterize the asymptotic behavior of the stepsizes. Indeed, since for $i\in\{1,\cdots,d\}$ and $n\geq 1$,  
\[
\frac{\tu_n(i)}{n}=\frac{\tu_0(i)}{n}+\frac{\pas}{n}\sum_{k=1}^n\theta_{k-1}(i)\un_{\Xset_i}(X_k) \eqsp,
\]
the property~\eqref{theo:ergoLLN} with $f(x)=\un_{\Xset_i}(x)$ implies that the SHUS algorithm generates sequences $(\tu_n)_{n \geq 0}$ and $(\gamma_n)_{n \geq 1}$ which satisfy
\begin{corollary}\label{cor:asumptpas}
  Under A\ref{hyp:targetpi} and A\ref{hyp:kernel},
  \[
  \lim_{n \to \infty} \frac{\tu_n}{n} = \frac{\gamma \tstar}{d}
  \quad \mathrm{a.s.}  \quad \mathrm{and} \quad \lim_{n \to \infty} n\gamma_n =d \quad \mathrm{a.s.}
  \]
\end{corollary}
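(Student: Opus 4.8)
The plan is to combine the telescoped form of the update rule~\eqref{eq:def:thetatilde} with the ergodic law of large numbers~\eqref{theo:ergoLLN}, which is at our disposal because Theorem~\ref{theo:cvgshus} guarantees that SHUS converges in the sense of Definition~\ref{defconvalgo}. Summing~\eqref{eq:def:thetatilde} over the first $n$ steps yields exactly the identity
\[
\frac{\tu_n(i)}{n}=\frac{\tu_0(i)}{n}+\frac{\pas}{n}\sum_{k=1}^n\tn_{k-1}(i)\,\un_{\Xset_i}(X_k)
\]
recorded just before the corollary, so it suffices to identify the almost sure limit of the Ces\`aro average $\frac1n\sum_{k=1}^n\tn_{k-1}(i)\,\un_{\Xset_i}(X_k)$ for each fixed $i$.

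Next I would apply~\eqref{theo:ergoLLN} with the bounded test function $f=\un_{\Xset_i}$. Since the strata form a partition, $\un_{\Xset_j}(X_k)\un_{\Xset_i}(X_k)=0$ whenever $j\neq i$, so the inner sum over $j$ in~\eqref{theo:ergoLLN} collapses to the single term $\tn_{k-1}(i)\,\un_{\Xset_i}(X_k)$, while the right-hand side becomes $\int_\Xset\un_{\Xset_i}(x)\,\pi(x)\,\rmd\lambda(x)=\tstar(i)$ by the definition~\eqref{eq:def:thetastar}. Dividing through by $d$ gives
\[
\lim_{n\to\infty}\frac1n\sum_{k=1}^n\tn_{k-1}(i)\,\un_{\Xset_i}(X_k)=\frac{\tstar(i)}{d}\quad\mathrm{a.s.}
\]
Substituting this into the recursion above, and noting that the term $\tu_0(i)/n$ vanishes because $\tu_0$ is deterministic, yields $\lim_n \tu_n(i)/n=\pas\,\tstar(i)/d$ almost surely, which is the first assertion.

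For the second assertion I would sum the first limit over $i$ and use $\tstar\in\Theta$, that is $\sum_{j=1}^d\tstar(j)=1$, to get $\frac1n\sum_{j=1}^d\tu_n(j)\to\pas/d$ almost surely. Recalling the stepsize formula~\eqref{eq:def:stepsize} in the form $\gamma_n=\pas\big/\sum_{j=1}^d\tu_{n-1}(j)$, I would write $n\gamma_n=\frac{n}{n-1}\cdot\frac{\pas}{(n-1)^{-1}\sum_{j=1}^d\tu_{n-1}(j)}$; since $\frac{n}{n-1}\to1$ and the denominator converges to $\pas/d$, the product converges to $d$ almost surely.

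I do not expect any genuine obstacle here: the statement is a direct consequence of the convergence already established in Theorem~\ref{theo:cvgshus}. The only point that requires a little care is the reduction of the general ergodic average appearing in~\eqref{theo:ergoLLN} to the precise quantity entering the recursion, which relies on the choice $f=\un_{\Xset_i}$ together with the disjointness of the strata, and on the harmless index shift between $\gamma_n$ and $\sum_{j=1}^d\tu_{n-1}(j)$.
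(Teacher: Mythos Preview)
Your proposal is correct and follows essentially the same approach as the paper: the corollary is deduced from the telescoped update identity $\frac{\tu_n(i)}{n}=\frac{\tu_0(i)}{n}+\frac{\pas}{n}\sum_{k=1}^n\tn_{k-1}(i)\,\un_{\Xset_i}(X_k)$ together with the ergodic average~\eqref{theo:ergoLLN} applied to $f=\un_{\Xset_i}$. Your write-up simply makes explicit the collapse of the $j$-sum and the index shift for $\gamma_n$, which the paper leaves to the reader.
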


Notice that this Corollary implies that the stepsize sequence
$(\gamma_n)_{ n \geq 1}$
scales like $d/n$ in the large $n$ limit. In Section~\ref{sec:scalings}, we will
therefore compare SHUS with the Wang-Landau algorithm implemented with
a stepsize sequence $\gamma^{\rm WL}_n=\frac{\gamma_\star}{n}$, for
some positive parameter $\gamma_\star$.

\subsection{Strategy of the proof of Theorem \ref{theo:cvggal}}
The proof of Theorem~\ref{theo:cvggal} is given in
Section~\ref{sec:proofs}. It relies on a rewriting of the updating mechanism of the sequence
$(\tn_n)_{n \geq 0}$ as a Stochastic Approximation (SA) algorithm for which
convergence results have been proven (see for
example~\cite{andrieu:moulines:priouret:2005}). Notice that
the updating formula \eqref{eq:def:thetagen}--\eqref{eq:def:thetatildegen} is equivalent to: for all $i \in \{1, \cdots, d \}$ and $n \in \N$
\begin{equation}
  \label{eq:theta:update:multiplicatif}
  \tn_{n+1}(i) =  \tn_n(i) \frac{1 + \gamma_{n+1}
  \un_{\Xset_i}(X_{n+1})}{1 + \gamma_{n+1}\tn_n(I(X_{n+1}))} \eqsp,
\end{equation}
where for all
$x \in \Xset$,
\[
I(x)=\sum_{j=1}^d j \un_{\Xset_j}(x)
\]
denotes the index of the stratum where $x$ lies.
Upon noting that $(1+a)/(1+b) = 1 + a -b + b (b-a)/(1+b)$,
(\ref{eq:theta:update:multiplicatif}) is equivalent to
\begin{equation}
  \label{eq:theta:approxsto}
  \tn_{n+1}(i) = \tn_n(i) + \gamma_{n+1} \, H_i(X_{n+1}, \tn_n) +
\gamma_{n+1} \,  \Lambda_{n+1}(i) \eqsp, 
\end{equation}
where $H: \Xset \times \Theta \to \R^d$ is defined by
\begin{equation}
  \label{eq:def:Hn}
  H_i(x,\tn) = \tn(i) \Big( \un_{\Xset_i}(x) - \tn(I(x)) \Big),
\end{equation}
and 
\begin{equation}
\label{eq:def:Lamndan}
\begin{aligned}
  \Lambda_{n+1}(i)  &= \gamma_{n+1} \ \tn_n(i) \ \tn_n(I(X_{n+1}))  \\
  & \quad \times\frac{\tn_n(I(X_{n+1})) - \un_{\Xset_i}(X_{n+1})}{1 + \gamma_{n+1} \, \tn_n(I(X_{n+1}))}\eqsp.
\end{aligned}
\end{equation}
Notice that the last term $\gamma_{n+1} \,  \Lambda_{n+1}$
in~\eqref{eq:theta:approxsto} is of the order of $\gamma_{n+1}^2$. The
recurrence relation~\eqref{eq:theta:approxsto} is thus in a standard
form to apply convergence results for SA algorithms 
(see {\it e.g.}~\cite{andrieu:moulines:priouret:2005}).

There are however three specific points in Algorithm~\ref{algogen} which make the study of this SA
recursion quite technical. A first difficulty raises from the fact that $(X_n)_{n \geq 0}$ alone is not a
Markov chain: given the past up to time~$n$, $X_{n+1}$ is generated according
to a Markov transition kernel computed at the current position $X_n$ but
controlled by the current value $\tn_n$, which depends on the whole trajectory $(\tu_0,X_0,\ldots,X_n)$. 
A second one comes from the randomness
of the stepsizes $(\gamma_n)_{n \geq 1}$. Finally, it is not clear
whether the sequence $(\tn_n)_{n \geq 0}$ remains a.s. in a compact subset of
the open set $\Theta$ so that a preliminary step when proving the convergence
of the SA recursion is to establish its recurrence, namely the fact
that the sequence $(\theta_n)_{n \geq 0}$ returns to a compact set of $\Theta$
infinitely often.

\subsection{A few crucial intermediate results}

Let us highlight a few results which are crucial to tackle these
difficulties and establish Theorem~\ref{theo:cvggal}.  The fundamental
result to address the dynamics of $(X_n)_{n \geq 0}$ controlled by  $(\theta_n)_{n \geq 0}$ is the following proposition established in \cite[Proposition 3.1.]{fort:jourdain:kuhn:lelievre:stoltz:2014} 
\begin{proposition}\label{prop:ergounif}
  Assume A\ref{hyp:targetpi} and A\ref{hyp:kernel}. There exists $\rho \in
  (0,1)$ such that
\[
\sup_{x\in\Xset} \sup_{\tn \in \Theta} \| P_\tn^n(x,\cdot) - \pi_\tn \, \rmd \lambda \|_{\tv} \leq 2 (1-\rho)^n \eqsp,
\]
where for a signed measure $\mu$, the total variation norm is defined as
\[
\| \mu \|_{\tv} = \sup_{\{f: \sup_\Xset |f| \leq 1\}} \left| \mu(f) \right|
\eqsp.
\]
\end{proposition}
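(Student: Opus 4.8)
The plan is to reduce the claim to a uniform one-step minorization (Doeblin) condition on the family $(P_\tn)_{\tn\in\Theta}$, with a minorizing constant $\rho$ that does \emph{not} depend on $\tn$, and then invoke the classical fact that such a condition forces geometric convergence to the invariant measure at the uniform rate $2(1-\rho)^n$. Concretely, I would aim to exhibit $\rho\in(0,1)$, independent of $\tn$ and $x$, such that
\[
P_\tn(x,\cdot)\ \ge\ \rho\,\pi_\tn\,\rmd\lambda
\]
as measures, for every $x\in\Xset$ and every $\tn\in\Theta$, where the minorizing measure is the invariant measure $\pi_\tn\,\rmd\lambda$ itself.

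First I would write the Metropolis--Hastings kernel explicitly: since the proposal $q$ is symmetric (A\ref{hyp:kernel}), the acceptance probability collapses to $\alpha_\tn(x,y)=\min\bigl(1,\pi_\tn(y)/\pi_\tn(x)\bigr)$, so the absolutely continuous part of $P_\tn(x,\rmd y)$ has density $q(x,y)\alpha_\tn(x,y)$ with respect to $\lambda$, while the rejection part is a nonnegative atom at $x$ that can be dropped for a lower bound. The crucial quantitative input is a uniform upper bound on $\pi_\tn$. Using the explicit form~\eqref{eq:def:PiTheta}, for $x\in\Xset_{I(x)}$ one has $\pi_\tn(x)=\pi(x)\bigl(\tn(I(x))\sum_{j}\tstar(j)/\tn(j)\bigr)^{-1}$, and bounding the sum in the denominator from below by its $j=I(x)$ term yields the $\tn$-free estimate
\[
\pi_\tn(x)\ \le\ \frac{\pi(x)}{\tstar(I(x))}\ \le\ \frac{\sup_\Xset\pi}{\min_{1\le i\le d}\tstar(i)}\ =:\ M\ <\ \infty,
\]
where positivity and finiteness of the right-hand side follow from A\ref{hyp:targetpi} (which in particular guarantees $\min_i\tstar(i)>0$).

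With this in hand the minorization is immediate. Writing $q_-:=\inf_{\Xset^2}q>0$ and using $q(x,y)\alpha_\tn(x,y)=q(x,y)\min(\pi_\tn(x),\pi_\tn(y))/\pi_\tn(x)$, I would set $\rho:=q_-/M=\inf_{\Xset^2}q\cdot\min_i\tstar(i)/\sup_\Xset\pi$ and check the two cases: when $\pi_\tn(y)\le\pi_\tn(x)$ the density equals $q(x,y)\pi_\tn(y)/\pi_\tn(x)\ge q_-\pi_\tn(y)/M=\rho\,\pi_\tn(y)$, and when $\pi_\tn(y)>\pi_\tn(x)$ it equals $q(x,y)\ge q_-=\rho M\ge\rho\,\pi_\tn(y)$, using $\pi_\tn\le M$ in both. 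This gives $P_\tn(x,\cdot)\ge\rho\,\pi_\tn\,\rmd\lambda$ uniformly. That $\rho\le 1$ is automatic, since integrating the minorization in $y$ gives $1=P_\tn(x,\Xset)\ge\rho$; in the degenerate case $\rho=1$ one simply replaces it by any smaller positive constant to land in $(0,1)$.

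Finally I would conclude by the textbook Doeblin argument: decompose $P_\tn(x,\cdot)=\rho\,\pi_\tn\,\rmd\lambda+(1-\rho)R_\tn(x,\cdot)$ with $R_\tn$ a Markov kernel, and couple two copies started from $\delta_x$ and from $\pi_\tn\,\rmd\lambda$ so that at each step they coalesce with probability at least $\rho$; invariance of $\pi_\tn\,\rmd\lambda$ then gives $\sup_A\bigl|P_\tn^n(x,A)-\int_A\pi_\tn\,\rmd\lambda\bigr|\le(1-\rho)^n$, and the factor $2$ in the statement is exactly the conversion between this total-variation distance and the $\sup_{|f|\le1}$ normalization used in $\|\cdot\|_\tv$. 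The only genuinely delicate step is the uniform-in-$\tn$ upper bound on $\pi_\tn$: it is precisely what prevents the vanishing components of $\tn\in\Theta$ (which degenerate $\pi_\tn$) from ruining uniformity, and it rests entirely on $\min_i\tstar(i)>0$ from A\ref{hyp:targetpi}; everything downstream is standard Markov chain theory.
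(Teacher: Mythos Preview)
Your argument is correct. The uniform bound $\pi_\tn(x)\le\sup_\Xset\pi/\min_i\tstar(i)$, obtained by keeping only the $j=I(x)$ term in the normalizing sum, is exactly the right observation to make the Doeblin constant independent of~$\tn$; the two-case check of the minorization and the contraction step are both clean.

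Regarding the comparison: the present paper does not actually prove this proposition. It merely quotes it from~\cite[Proposition~3.1]{fort:jourdain:kuhn:lelievre:stoltz:2014}, so there is no in-paper argument to compare against. Your proof is the standard route (uniform one-step minorization with the invariant measure as minorizing measure, then Doeblin contraction), and it is in all likelihood the same argument as in the cited reference, since that is essentially the only natural way to exploit the hypotheses $\inf_{\Xset^2}q>0$ and $0<\inf_\Xset\pi\le\sup_\Xset\pi<\infty$. What your write-up adds over a bare citation is making explicit \emph{where} the uniformity in~$\tn$ comes from: the cancellation of $\tn(I(x))$ between the stratum weight and the normalization, leaving only~$\tstar$ in the bound.
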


In the present case, the recurrence
property  of the SA algorithm means the existence of a positive threshold such that infinitely often in $n$, the minimal weight
\begin{equation}
   \label{eq:StratumLowestWeight}
  \underline{\tn}_n = \min_{1\leq i\leq d} \tn_n(i) 
\end{equation} is larger than the threshold. Let \begin{equation}
  \label{eq:DefIn}
  I_n = \min \Big\{i \, : \, \tn_n(i)=\underline{\tn}_n \Big\}, 
\end{equation}
be the smallest index of stratum with smallest weight according to $\tn_n$ and  $(T_k)_{k\in\N}$ be the times of return to the stratum of smallest weight: $T_0=0$
and, for $k\geq 1$,
\begin{equation}
\label{eq:Tk}
T_k=\inf\Big\{n>T_{k-1}\,:\,X_n\in\Xset_{I_n}\Big\} ,
\end{equation}
with the convention $\inf\emptyset=+\infty$. We prove in
Section~\ref{sec:proofs} the following recurrence property.

\begin{proposition}
  \label{prop:stabilite}
  Assume A\ref{hyp:targetpi}, A\ref{hyp:kernel} and the existence of a deterministic sequence $(\overline{\gamma}_n)_n$ converging to $0$ such that $\P(\forall n\geq 1,\;\gamma_n\leq \overline{\gamma}_n)=1$.  Then Algorithm~\ref{algogen} is such that
  \begin{equation}
    \label{proprec}
    \P\left(\forall k\in\N,\;T_k<+\infty\ \mathrm{and} \ \limsup_{k\to\infty} \underline{\theta}_{T_{k}-1} >0\right)=1,
  \end{equation}
  and
\begin{equation}
  \label{lgtk}
  \P\Big(\exists C_T<+\infty,\;\forall k\in\N,\;T_k\leq C_T k\Big)=1.
\end{equation}
\end{proposition}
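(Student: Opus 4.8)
The plan is to base everything on one geometric fact that quantifies the self-healing effect: by~\eqref{eq:PiThetaStratum} and A\ref{hyp:targetpi}, the stratum of currently smallest weight always carries a mass bounded below under the biased measure. Setting $c_0 = \min_i\tstar(i)/(d\,\max_j\tstar(j))$, formula~\eqref{eq:PiThetaStratum} gives $\int_{\Xset_{I_n}}\pi_{\tn_n}\,\rmd\lambda\ge c_0>0$ for every $n$, uniformly in the configuration, because the numerator $\tstar(I_n)/\tn_n(I_n)$ is the largest of the ratios appearing in the denominator; moreover this mass tends to $1$ as $\underline{\tn}_n\to 0$, since then $\tstar(I_n)/\tn_n(I_n)$ dominates the sum. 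This restoring effect drives both halves of the proposition.

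For the finiteness of the $T_k$ and the linear bound~\eqref{lgtk}, I would combine this lower bound with the uniform geometric ergodicity of Proposition~\ref{prop:ergounif}. Over a window of fixed length $N$, the normalized weights move by at most $O(N\,\overline{\gamma}_n)\to 0$, so for $n$ large the target $\pi_{\tn_m}$ and the minimal index $I_m$ are essentially frozen on $(n,n+N]$, ties being broken harmlessly by the lexicographic rule~\eqref{eq:DefIn}. Applying Proposition~\ref{prop:ergounif} to the (almost) frozen chain and choosing $N$ with $2(1-\rho)^N\le c_0/2$ yields $\P(X_m\in\Xset_{I_m}\text{ for some }m\in(n,n+N]\mid\mathcal{F}_n)\ge c_0/2$. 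Hence the inter-return times $T_k-T_{k-1}$ are stochastically dominated by $N$ times a geometric law with parameter bounded away from $0$; this gives $T_k<+\infty$ for all $k$ almost surely, and since these increments have uniformly bounded exponential moments, $\P(T_k>a k)$ is summable in $k$ for $a$ large enough, so Borel--Cantelli furnishes a finite random constant $C_T$ with $T_k\le C_T k$.

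The delicate point, and the main obstacle, is the non-collapse statement $\limsup_k\underline{\tn}_{T_k-1}>0$ in~\eqref{proprec}. A useful elementary remark is that the update~\eqref{eq:def:thetatildegen} only multiplies each $\tu_n(i)$ by a factor $\ge 1$, so $u_n:=\min_i\tu_n(i)$ is non-decreasing and $\underline{\tn}_n=u_n/\sum_j\tu_n(j)$; the whole difficulty is to prevent the total mass from outrunning $u_n$ along the returns. Since the hypotheses here only require $\gamma_n\le\overline{\gamma}_n\to 0$, I would split on the realization of $\sum_n\gamma_n$. If $\sum_n\gamma_n<\infty$, then each $\tu_n(i)$ stays between $\tu_0(i)$ and $\tu_0(i)\prod_n(1+\gamma_n)<\infty$, so $\tn_n$ converges to a limit with strictly positive entries and $\underline{\tn}_{T_k-1}$ is bounded below trivially. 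If $\sum_n\gamma_n=+\infty$, I would set up a drift/recurrence argument: a short computation shows that $\log\tn_{n+1}(i)-\log\tn_n(i)=\gamma_{n+1}\bigl(\un_{\Xset_i}(X_{n+1})-\tn_n(I(X_{n+1}))\bigr)+O(\gamma_{n+1}^2)$, so the increment of $-\log\underline{\tn}_n$ has conditional mean, against the nearly invariant measure $\pi_{\tn_n}$, of order $-\gamma_{n+1}\bigl(\pi_{\tn_n}(\Xset_{I_n})-\sum_\ell\pi_{\tn_n}(\Xset_\ell)\tn_n(\ell)\bigr)$, which is strictly negative once $\underline{\tn}_n$ falls below a threshold $\epsilon_0$ by the strengthened bound above. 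Turning this into genuine recurrence of the set $\{\underline{\tn}_n\ge\epsilon_0\}$ is where the work lies, because $(X_n)$ is not itself Markov, the stepsizes are random, and $-\log\underline{\tn}_n$ is non-smooth with a moving argmin: I would replace one-step conditional expectations by $\pi_{\tn_n}$-averages, controlling the error through Proposition~\ref{prop:ergounif}, and feed the resulting $\gamma$-weighted supermartingale estimate with negative drift below $\epsilon_0$ into the stochastic-approximation recurrence machinery of~\cite{andrieu:moulines:priouret:2005}. Recurrence of $\{\underline{\tn}_n\ge\epsilon_0\}$ is exactly $\limsup_k\underline{\tn}_{T_k-1}>0$, and combined with the window estimate it yields~\eqref{proprec} and~\eqref{lgtk}.
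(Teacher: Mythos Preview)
Your plan for the return-time bound~\eqref{lgtk} is essentially the paper's: a uniform geometric tail $\P(T_{k+1}-T_k>md\mid\mathcal{F}_{T_k})\le(1-p)^m$ is obtained (this is~\eqref{eq:recomp_1}, inherited from~\cite{fort:jourdain:kuhn:lelievre:stoltz:2014}), and then a law-of-large-numbers argument gives linear growth. The paper concludes not via Borel--Cantelli but by an explicit coupling: it constructs $(\tilde T_k)$ equal in law to $(T_k)$ with $\tilde T_{k+1}-\tilde T_k\le d\,\tau_{k+1}$ for i.i.d.\ geometric $(\tau_k)$, and applies the strong law of large numbers to the $\tau_k$ to get $\limsup_k T_k/k\le d/p$. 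Your Borel--Cantelli route works as well and is arguably more direct. One small caveat: your window argument (``the target and $I_m$ are essentially frozen on $(n,n+N]$'') only gives the geometric bound for $n$ large, not uniformly in $k$ as in~\eqref{eq:recomp_1}; this is enough for~\eqref{lgtk}, but the argument in~\cite{fort:jourdain:kuhn:lelievre:stoltz:2014} obtains the uniform bound directly via a minorization that does not require $\gamma_n$ small.

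For the non-collapse half of~\eqref{proprec} the paper does not give a self-contained argument; it simply observes that the proof in~\cite[Section~4.2]{fort:jourdain:kuhn:lelievre:stoltz:2014} goes through verbatim once the stepsizes are dominated by a deterministic $\overline\gamma_n\to 0$. Your Lyapunov sketch around $-\log\underline{\tn}_n$ captures the same restoring mechanism (the minimal-weight stratum carries the largest $\pi_{\tn_n}$-mass, so visits push $\underline{\tn}_n$ up), and the split on $\sum_n\gamma_n$ is sound. The gap is in the last step: you propose to ``feed the supermartingale estimate into the recurrence machinery of~\cite{andrieu:moulines:priouret:2005}'', but in the present paper that reference is invoked \emph{after} Proposition~\ref{prop:stabilite} is established---Theorems~2.2--2.3 there take recurrence to a compact set as an \emph{input} and output stability and convergence, not the other way around. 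So you still owe a genuine recurrence argument from the $\gamma$-weighted drift below $\epsilon_0$ plus the Poisson-equation error control; this is precisely the content of~\cite[Section~4.2]{fort:jourdain:kuhn:lelievre:stoltz:2014} and cannot be outsourced to~\cite{andrieu:moulines:priouret:2005}. Likewise, the claimed equivalence ``recurrence of $\{\underline{\tn}_n\ge\epsilon_0\}$ is exactly $\limsup_k\underline{\tn}_{T_k-1}>0$'' is not immediate: passing from arbitrary times $n$ to the subsequence $T_k-1$ requires the geometric window bound together with $\underline{\tn}_{m+1}\ge\underline{\tn}_m/(1+\gamma_{m+1})$.
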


Using the recurrence property of
Proposition~\ref{prop:stabilite}, we are then able to prove
that Algorithm~\ref{algogen} converges in the sense of Definition~\ref{defconvalgo}-$(i)$ by using general convergence results for SA algorithms
given
in~\cite{andrieu:moulines:priouret:2005}. The properties in Definition~\ref{defconvalgo}-$(ii)$-$(iii)$
then follow from convergence results for adaptive Markov Chain Monte
Carlo algorithms given in~\cite{fort:moulines:priouret:2011}. See Section~\ref{sec:proofs} for the details.

%-------------------------------------------------------------------------

\section{Numerical investigation of the efficiency}
\label{sec:numerical}  

We present in this section some numerical results illustrating
the efficiency of SHUS in terms of exit times from a metastable state. We also 
compare the performances of SHUS and of the Wang-Landau
algorithm on this specific example.

\subsection{Presentation of the model and of the dynamics}

We consider the system based on the two-dimensional potential suggested
in~\cite{PSLS03}, see also~\cite{amrx} for similar experiments on the
Wang-Landau algorithm. The state space is $\Xset = [-R,R] \times \mathbb{R}$ (with
$R=1.2$), and we denote by $x=(x_1,x_2)$ a generic element of~$\Xset$. The
reference measure $\lambda$ is the Lebesgue measure $\rmd x_1\, \rmd x_2$. The density of the target
measure reads 
\[
\pi(x) = Z^{-1} \un_{[-R,R]}(x_1) \, \mathrm{e}^{-\beta U(x_1,x_2)},
\]
for some positive inverse temperature $\beta$, with
% \begin{align}
% U(x_1,x_2)
% & = 3 \exp\left(-x_1^2 - \left(x_2-\frac13\right)^2\right) \nonumber \\
% & \quad - 3 \exp\left(-x_1^2 - \left(x_2-\frac53\right)^2\right) \label{eq:pot_U} \\
% & \quad - 5 \exp\left(-(x_1-1)^2 - x_2^2\right) \nonumber\\
% & \quad - 5 \exp\left(-(x_1+1)^2 - x_2^2\right) \nonumber\\
% & \quad + 0.2 x_1^4 + 0.2 \left(x_2-\frac13\right)^4. \nonumber
% \end{align}
\begin{equation}\label{eq:pot_U}
\text{
\scriptsize
$
\begin{aligned} U(x_1,x_2)
& = 3 \exp\left(-x_1^2 - \left(x_2-\frac13\right)^2\right) - 3 \exp\left(-x_1^2 - \left(x_2-\frac53\right)^2\right) \\
& \quad - 5 \exp\left(-(x_1-1)^2 - x_2^2\right) - 5 \exp\left(-(x_1+1)^2 - x_2^2\right)\\
& \quad + 0.2 x_1^4 + 0.2 \left(x_2-\frac13\right)^4, 
 \end{aligned}
$}
\end{equation}
and the normalization constant 
\[
Z = \int_{\Xset} \mathrm{e}^{-\beta U(x_1,x_2)} \, \rmd x_1 \, \rmd x_2 .
\]
We introduce $d$ strata $\Xset_\ell = (a_\ell,a_{\ell+1}) \times
\mathbb{R}$, with $a_\ell = -R + 2(\ell-1) R/d$ and $\ell=1, \ldots,
d$. Thus, the element $x=(x_1,x_2) \in \Xset$ lies in the stratum $I(x_1)=\left\lfloor
\frac{x_1+R}{2R} d \right\rfloor + 1$.

A plot of the level sets of the potential~$U$ is presented in
Figure~\ref{fig:contour}. The global minima of the
potential~$U$ are located at the points $x_- \simeq (-1.05,-0.04)$ and $x_+ \simeq
(1.05,-0.04)$ (notice that the potential is symmetric with respect to the $y$-axis).

The Metropolis-Hastings kernels are constructed using isotropic proposal moves
distributed in each direction according to independent Gaussian random variables with variance $\sigma^2$. 
The reference Metropolis-Hastings dynamics
$P_{(\frac{1}{d},\cdots,\frac{1}{d})}$ is ergodic and reversible with respect
to the measure with density~$\pi$. This dynamics  is metastable: for
local moves ($\sigma$ of the order of a fraction of $\|x_+-x_-\|$), it takes a
lot of time to go from the left to the right, or equivalently from the right to the left. More
precisely, there are two main metastable states: one located around $x_-$, 
and another one around $x_+$.  These two states are separated
by a region of low probability. The metastability of the dynamics increases
with $\beta$ ({\it i.e.} as the temperature decreases). The larger $\beta$ is,
the larger is the ratio between the weight under $\pi$ of the strata located
near the main metastable states and the weight under $\pi$ of the transition
region around $x_1 = 0$, and the more difficult it is to leave the left
metastable state to enter the one on the right (and conversely). We
refer for example to~\cite[Fig. 3-1]{amrx} for a numerical
quantification of this statement.

As already pointed out in~\cite{amrx}, adaptive algorithms such as the
Wang-Landau dynamics are less metastable than the original
Metropolis-Hastings dynamics, in the sense that the typical time to
leave a metastable state is much smaller thanks to the adaption mechanism.
In this section, we compare
the adaptive Markov chain $(X_n)_{n \geq 0}$ corresponding to the SHUS algorithm with the
one generated by the Wang-Landau dynamics $(X^{\rm WL}_n)_{n \geq 0}$ with a nonlinear update of the
weights (see~\eqref{eq:def:thetatilde_WL}) with stepsizes $\gamma^{\rm WL}_n=\gamma_\star/n$ for
some constant $\gamma_\star>0$. This choice for $\gamma^{\rm WL}_n $
is motivated by the asymptotic behavior
of $(\gamma_n)_{n \geq 1}$ in the limit $n \to \infty$, see Corollary~\ref{cor:asumptpas}. The proposal kernel used in the
Metropolis algorithm is the same for SHUS and Wang-Landau. Therefore,
the two algorithms only differ by the update rules of the weight sequence $(\tn_n)_{n \geq 0}$.  The initial
weight vector $\tu_0$ is $(1/d,\dots,1/d)$ and the initial condition
is $X_0 = (-1,0)$ for both dynamics.

% produced with /travail/enlib/WangLandau/GT/AMRX/numerics_for_revision/many_exits/contour.sce; includes an optimization routine to find the minima
\begin{figure}
  \includegraphics[width=0.53\textwidth]{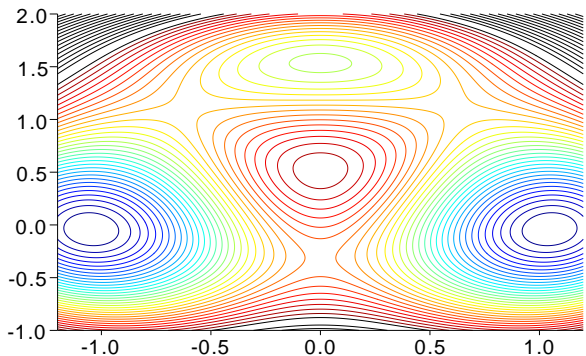}
  \caption{\label{fig:contour}  Level sets of the potential~$U$ defined
  in~\eqref{eq:pot_U}. The minima are located at the positions $x_\pm
  \simeq (\pm 1.05,-0.04)$, and there are three saddle-points, at the positions $x^{{\rm sd},1}_\pm \simeq (\pm 0.6,1.15)$ and $x^{{\rm sd},2} \simeq (0,-0.3)$. The energy differences of these saddle points with respect to the minimal potential energy are respectively~$\Delta U^1 \simeq 2.2$ and~$\Delta U^2 \simeq 2.7$. }
\end{figure}
% For two-column wide figures use
%  \includegraphics[width=0.75\textwidth]{example.eps}

\subsection{Study of a typical realization}

Let us first consider a typical realization of the SHUS algorithm,
in the case when $\sigma$ is equal to the width of a stratum~$2R/d$, with
$d=48$ (so that $\sigma=0.05$), $\gamma=1$ and an inverse temperature~$\beta =
10$. The values of the first component of the chain as a function of the
iterations index $n \mapsto X_{n,1}$ are reported in~Figure~\ref{fig:traj}. The
trajectory is qualitatively very similar to the trajectories obtained with the
Wang-Landau algorithm (see for example~\cite[Figure~5]{amrx}). In
particular, the exit time out of the second visited well is
much larger than the exit time out of the first one. Such a behavior is proved
for the Wang-Landau algorithm with a deterministic stepsize $\gamma_n =
\gamma_\star/n$ in~\cite[Section~6]{amrx}.  After the exit out of the second
well, the convergence of the sequence $(\theta_n)_{n \geq 0}$ is already almost achieved, and
the dynamics freely moves between the two wells.

When the initial exit out of the first well occurs, the biases within this well are already very well converged, see Figure~\ref{fig:bias}. 

In the longtime limit, according to Figure~\ref{fig:eff_time_step} (top), one has
$n\gamma_n \to d = 48$, as predicted by Corollary \ref{cor:asumptpas}. In the
initial phase (before the exit out of the left well), $n\gamma_n$ stabilizes
around a value corresponding to the number of strata explored within
the well, see Figure~\ref{fig:eff_time_step} (bottom) (here,
the exploration is well performed for values of~$x_1$ between -1.2 and~-0.45,
which corresponds to~15 strata). In this phase, only the restriction of the
target density $\pi$ to these strata is seen by the algorithm and this
convergence can be seen as a local version of Corollary \ref{cor:asumptpas}.

\begin{figure}
  \includegraphics[width=0.5\textwidth]{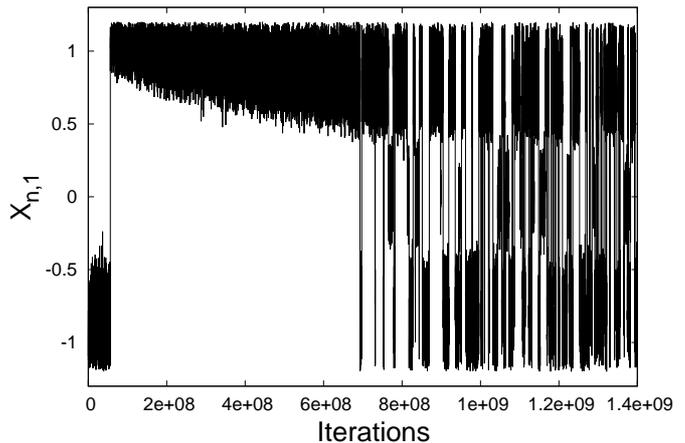}
  \caption{\label{fig:traj}  Top: Typical trajectory~$X_{n,1}$ for the parameters $d=48$, $\sigma = 2R/d = 0.05$, $\gamma = 1$ and $\beta = 10$.}
\end{figure}

\begin{figure}
  \includegraphics[width=0.5\textwidth]{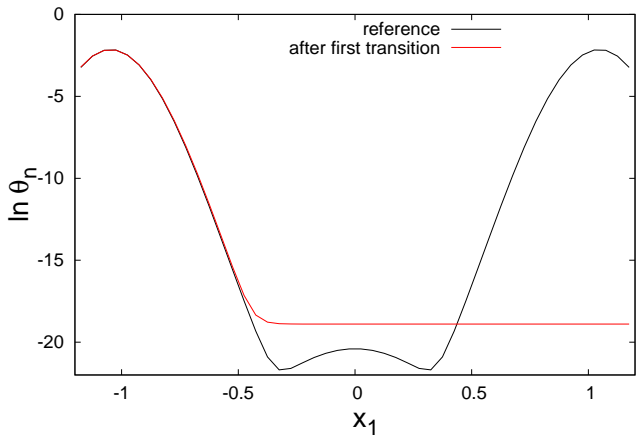}
  \caption{\label{fig:bias}  Plot of the bias $\ln(\theta_n(I(x_1)))$
    at the iteration index~$n$ when the system leaves the left well
    for the first time ($d=48$, $\sigma = 2R/d = 0.05$, $\gamma = 1$ and $\beta = 10$). The reference values $\ln(\theta_\star(I(x_1)))$ are computed with a numerical quadrature of the integrals~\eqref{eq:def:thetastar}.}
\end{figure}

\begin{figure}
  \includegraphics[width=0.5\textwidth]{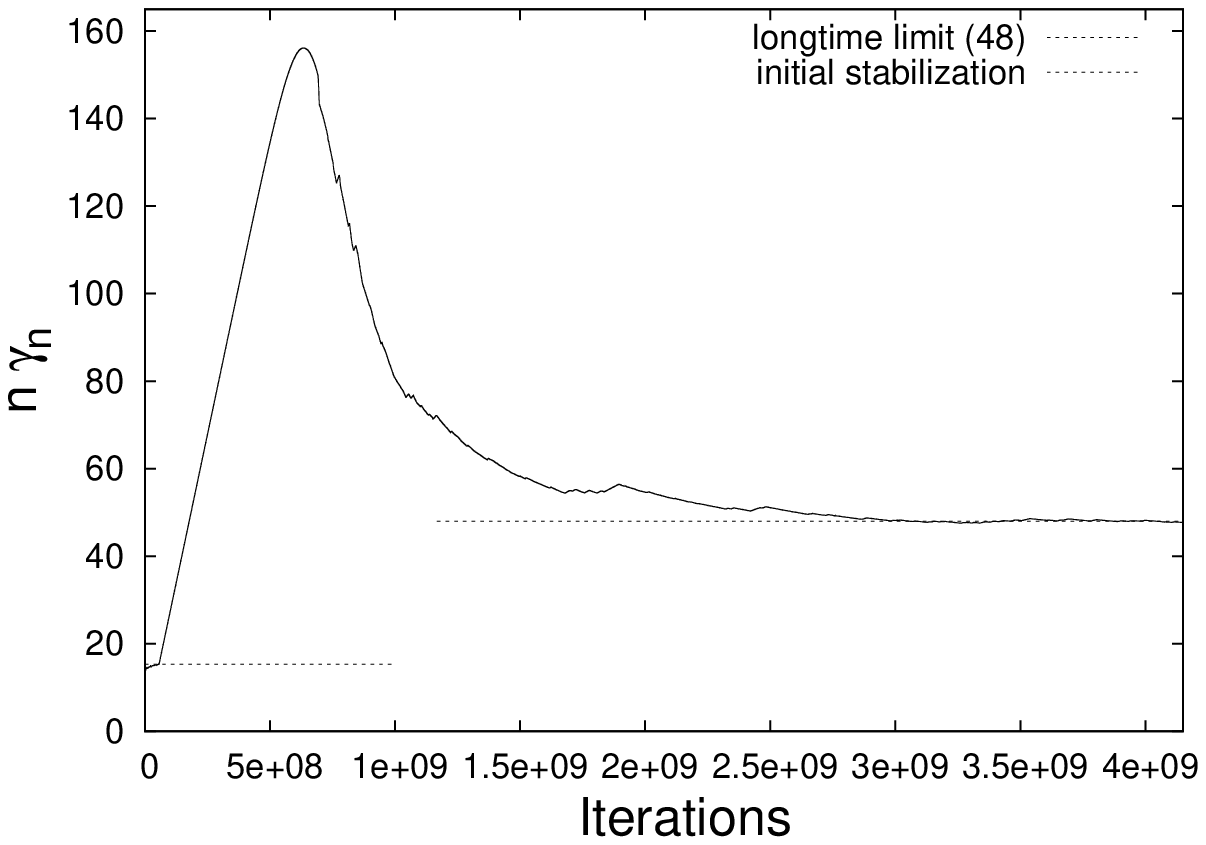}
  \includegraphics[width=0.5\textwidth]{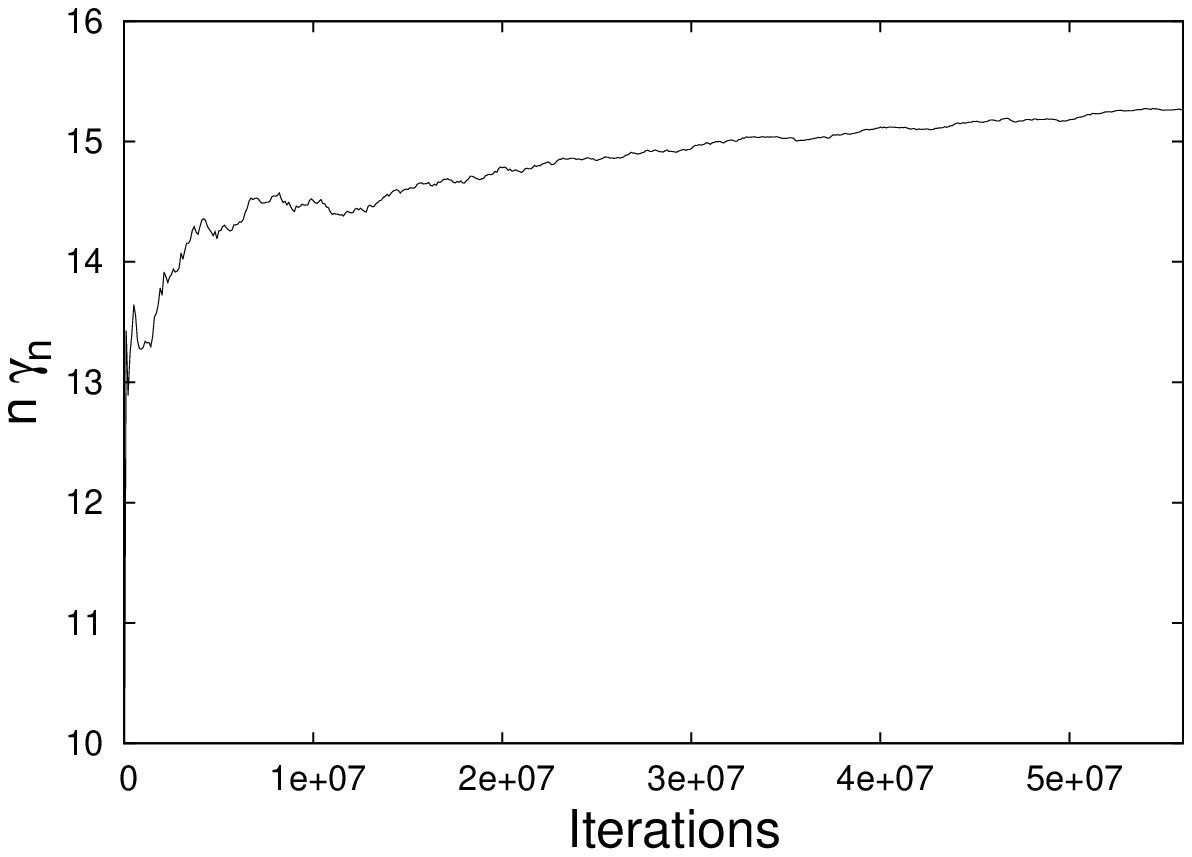}
  \caption{\label{fig:eff_time_step} Behavior of the stepsize
    sequence~$(\gamma_n)_{n \geq 1}$ ($d=48$, $\sigma = 2R/d = 0.05$, $\gamma = 1$ and
    $\beta = 10$). Top: Longtime convergence. Bottom: Stabilization
    during the exploration of the left well before the first
    exit.}
\end{figure}

\subsection{Scalings of the first exit time}
\label{sec:scalings}

In this section, we study the influence of the three parameters
$\sigma$, $\gamma$ and $d$ on the first exit times (in the limit of
small temperature). Concerning $\tu_0$, we stick to the assumption
that without any prior knowledge on the system, the choice
$\tu_0(1)=\ldots=\tu_0(d)$ is natural. In addition, notice that multiplying $\tu_0$ and the parameter
$\gamma$ by the same constant $c>0$ does not modify the sequence $(\tn_n,X_n)_{n
  \geq 0}$ generated by the SHUS algorithm. This is why we always choose $\tu_0(1)=\ldots=\tu_0(d)=1/d$.

Average exit times are obtained by performing independent realizations of the
following procedure: initialize the system in the state $X_0=(-1,0)$, and run
the dynamics until the first time index $\mathcal{N}$ such that
$X_{{\mathcal{N}},1} > 1$ ({\it i.e.} the first component of $X_\mathcal{N}$
is larger than~1) for SHUS or ${X}^{\rm WL}_{{\mathcal{N}},1} > 1$ for
Wang-Landau.  The average of this first exit time is denoted by $t_\beta$ for SHUS and by
${t^{\rm WL}_\beta}$ for Wang-Landau. For a given value of the inverse
temperature~$\beta$, the computed average exit times~$t_\beta$
and~${t^{\rm WL}_\beta}$ are obtained by averaging over $K$ independent
realizations of the process started at $X_0$.  We use the Mersenne-Twister
random number generator as implemented in the GSL library. Since we work with a fixed maximal computational time 
(of about a week or two on our computing machines with our implementation of the code),
$K$ turns out to be of the order of a few thousands for the largest exit times, while $K=10^5$ in the
easiest cases corresponding to the shortest exit times. In our
numerical results, we checked that $K$
is always sufficiently large so
that the relative error on~$t_\beta$ and~${t^{\rm WL}_\beta}$ is less than a
few percents in the worst cases.

According to~the numerical experiments performed in~\cite{amrx} that confirm
the theoretical analysis of a simple three-states model also given in~\cite{amrx}, 
the scaling behavior for $t^{\rm WL}_\beta$ in the limit $\beta \to
\infty$ for the Wang-Landau algorithm with a stepsize sequence $(\gamma_\star/n)_{n\geq 1}$ is
\begin{equation}
  \label{eq:predicted_law_ref}
  {t^{\rm WL}_\beta} \sim C_{\rm WL}\exp(\beta \mu_{\rm WL}) \eqsp,
\end{equation}
where $C_{\rm WL}$ and $\mu_{\rm WL}$ are positive constants which
depend on~$\sigma$, $\gamma_\star$ and $d$.

Due to the initial convergence of $(n\gamma_n)_{n \geq 1}$ to the number $d_{\rm sv}$ of
strata visited before the first exit time, one expects the first exit time of the
SHUS algorithm to behave like the first exit time for the Wang-Landau algorithm with
stepsizes $(d_{\rm sv}/n)_{n\geq 1}$. Note that since we use strata of equal sizes, the number $d_{\rm sv}$ is
proportional to the total number $d$ of strata (see Table~\ref{tab:SHUS_WL} 
for a more quantitative assessment).

We consider three situations:
\begin{enumerate}[(i)]
\item We first study how the exit times vary as a function of $d$ with
  $\sigma = 2R/d$ and the fixed value $\gamma = 1$; see Figure~\ref{fig:SHUS_d} and
  Table~\ref{tab:SHUS_d}.
\item We then fix $\sigma = 0.1$ and study how the exit times depend on $d$, still with the fixed value $\gamma = 1$; see Figure~\ref{fig:SHUS_dx} and Table~\ref{tab:SHUS_dx}.
\item We finally study the scaling of the exit times depending on the
  value $\gamma$ when $d = 12$ and $\sigma = 2R/d = 0.2$ are fixed; see Figure~\ref{fig:SHUS_gamma} and Table~\ref{tab:SHUS_gamma}.  
\end{enumerate}

We observe in all cases that the average first exit time is of the form 
\[
t_\beta \sim C(\gamma,d,\sigma) \, \exp(\beta \mu)
\]
in the limit of large $\beta$, the values $\mu$ and $C(\gamma,d,\sigma)$ being obtained by a least-square fit in log-log scale. In view of~\eqref{eq:predicted_law_ref}, this confirms the validity of the above comparison with the Wang-Landau algorithm. The exponential rate
$\mu$ does not seem to
depend on the values of the parameters $\sigma$, $d$ and $\gamma$.  Only the
prefactors~$C(\gamma,d,\sigma)$ depend on these parameters. The larger the
number of strata, and the lower the value of~$\gamma$, the larger the
prefactor is. A more quantitative assessment of the increase of the prefactor
with respect to larger numbers of strata~$d$ and smaller values of~$\gamma$ is provided in
the captions of Table~\ref{tab:SHUS_dx} and~\ref{tab:SHUS_gamma}.

\begin{figure}
  \includegraphics[width=0.5\textwidth]{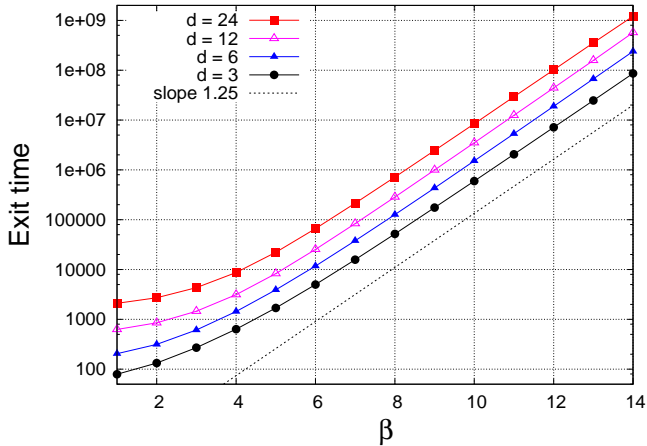}
  \caption{\label{fig:SHUS_d} Exit times as a function of the inverse temperature~$\beta$ when the number of strata $d$ is varied while the magnitude $\sigma$ of the proposed displacement is modified accordingly as $\sigma = 2R/d$ (with $\gamma=1$ fixed).}
\end{figure}

\begin{figure}
  \includegraphics[width=0.5\textwidth]{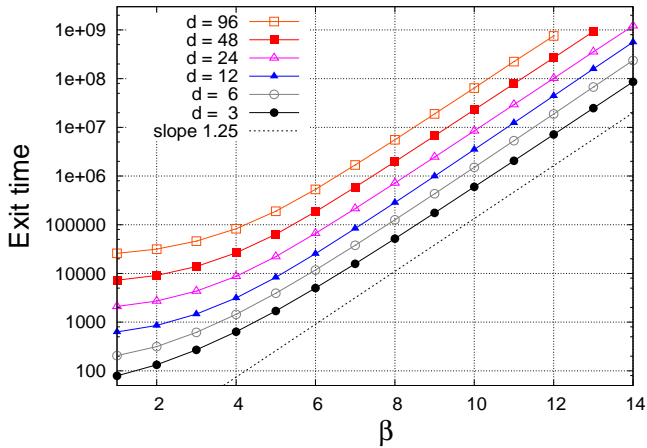}
  \caption{\label{fig:SHUS_dx} Exit times as a function of the inverse temperature~$\beta$ when the number of strata~$d$ is varied, with the fixed values $\gamma = 1$ and $\sigma = 0.1$.}
\end{figure}

\begin{figure}
  \includegraphics[width=0.5\textwidth]{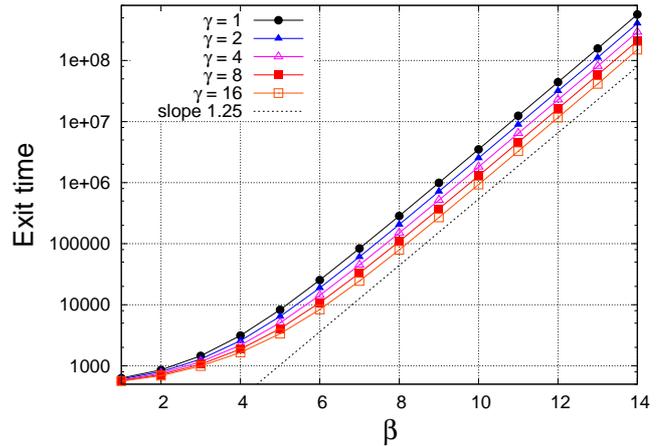}
  \caption{\label{fig:SHUS_gamma} Exit times as a function of the inverse temperature~$\beta$ when $\gamma$ is varied, with the fixed values $d=12$ and $\sigma = 2R/d = 0.2$.}
\end{figure}

\begin{table}
\caption{Scaling law $t_\beta \sim C(\gamma,d,\sigma) \mathrm{e}^{\beta \mu}$ for Figure~\ref{fig:SHUS_d}.}
\label{tab:SHUS_d}      
\begin{tabular}{ccc}
\hline\noalign{\smallskip}
& slope $\mu$ & prefactor $C(\gamma,d,\sigma)$ \\
\noalign{\smallskip}\hline\noalign{\smallskip}
$d=3$  &  1.24 & 2.56 \\
$d=6$  &  1.26 & 5.27 \\
$d=12$ &  1.27 & 11.1 \\
$d=24$ &  1.24 & 34.9 \\
\noalign{\smallskip}\hline
\end{tabular}
\end{table}

\begin{table}
\caption{Scaling law $t_\beta \sim C(\gamma,d,\sigma) \mathrm{e}^{\beta \mu}$ for Figure~\ref{fig:SHUS_dx}. It appears that $C(\gamma,d,\sigma) \simeq C(\gamma,d_0,\sigma) (d/d_0)^{1.4}$ for some reference value~$d_0$.}
\label{tab:SHUS_dx}      
\begin{tabular}{ccc}
\hline\noalign{\smallskip}
& slope $\mu$ & prefactor $C(\gamma,d,\sigma)$ \\
\noalign{\smallskip}\hline\noalign{\smallskip}
$d=3$   &  1.24 & 2.48 \\
$d=6$   &  1.26 & 5.00 \\
$d=12$   &  1.27 & 10.8 \\
$d=24$   &  1.24 & 34.9 \\
$d=48$  &  1.23 & 102 \\
$d=96$ &  1.23 & 295 \\
\noalign{\smallskip}\hline
\end{tabular}
\end{table}

\begin{table}
\caption{Scaling law $t_\beta \sim C(\gamma,d,\sigma) \mathrm{e}^{\beta \mu}$ for Figure~\ref{fig:SHUS_gamma}. It appears that $C(\gamma,d,\sigma) \simeq C(1,d,\sigma) \gamma^{-1/2}$.}
\label{tab:SHUS_gamma}      
\begin{tabular}{ccc}
\hline\noalign{\smallskip}
& slope $\mu$ & prefactor $C(\gamma,d,\sigma)$ \\
\noalign{\smallskip}\hline\noalign{\smallskip}
$\gamma = 1$  & 1.27 & 10.8\\
$\gamma = 2$  & 1.27 & 7.97 \\
$\gamma = 4$  & 1.27 & 5.69 \\
$\gamma = 8$  & 1.27 & 4.01 \\
$\gamma = 16$ & 1.27 & 2.98 \\
\noalign{\smallskip}\hline
\end{tabular}
\end{table}

To conclude these numerical investigations on the comparison between
SHUS and Wang-Landau, we look for which values of
$\gamma_\star$ the two average first exit times ${t^{\rm WL}_\beta}$ and
$t_\beta$ behave similarly (in the
limit of large $\beta$). Some average
exit times for SHUS and the Wang-Landau algorithm are presented in
Figure~\ref{fig:SHUS_WL}. Table~\ref{tab:SHUS_WL} gives intervals of values
for~$\gamma_\star$ for which the exponential rate of increase of the exit times
for the Wang-Landau dynamics matches the ones for the SHUS dynamics. 

As the number of strata is increased, the value of~$\gamma_\star$ has to be
increased in order to retrieve the same asymptotic scaling of exit
times. More precisely, we observe that $\gamma_\star$ should be
proportional to $d$ and this confirms the above comparison between
SHUS and the Wang-Landau algorithm with stepsizes $(d_{\rm sv}/n)_{n\geq 1}$ 
(since $d_{\rm sv}$ is indeed proportional to $d$). It is possible to estimate more precisely the number $d_{\rm sv}$ of
strata visited before the first exit time by a graphical inspection, as illustrated
in Figure~\ref{fig:histo_d_sv}. With this estimate of $d_{\rm sv}$, we
observe that $\gamma_\star$ is indeed very close to $d_{\rm sv}$,
although a bit smaller. 

\begin{figure}
  \includegraphics[width=0.5\textwidth]{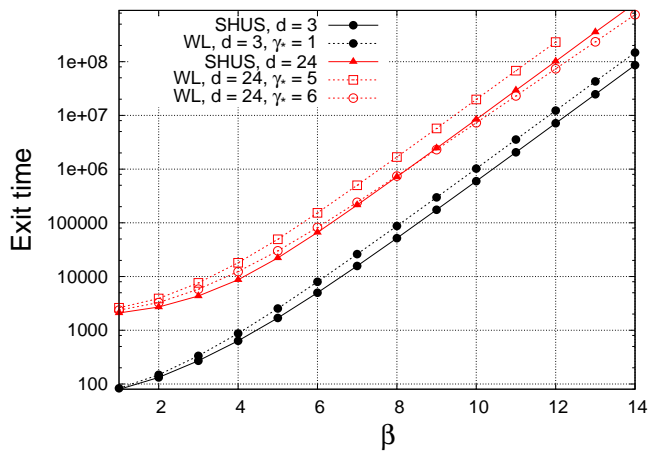}
  \caption{\label{fig:SHUS_WL} Exit times as a function of the inverse temperature~$\beta$, for SHUS and Wang-Landau dynamics, with the choice $\sigma = 2R/d$. In the case $d=24$, the exits times for SHUS are smaller than for Wang-Landau with parameter $\gamma_\star = 5$, but larger than Wang-Landau with parameter $\gamma_* = 6$.}
\end{figure}

\begin{figure}
  \hspace{-0.4cm}\includegraphics[width=0.55\textwidth]{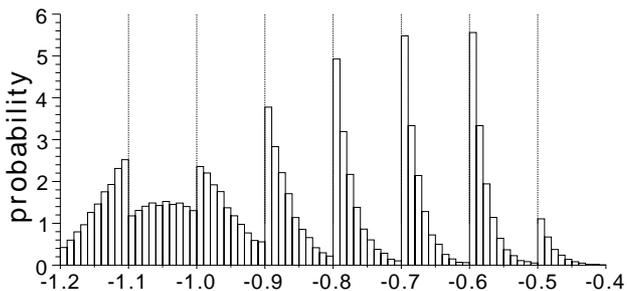}
  \caption{\label{fig:histo_d_sv} Histogram of the values of $x_1$
    visited along a typical trajectory before the first exit of the
    left metastable state, for $\beta = 14$, $d=24$ and $\sigma = 2R/d
    = 0.1$. The frontiers of the strata are indicated by vertical lines. In this example, $d_{\rm sv} = 7-8$ since 7~strata are very well visited, while the stratum corresponding to $-0.5 \leq x_1 \leq -0.4$ is somewhat less visited.}
\end{figure}

\begin{table}
\caption{Values of $\gamma_\star$ for which the asymptotic exponential rate of increase of the exit times for Wang-Landau match the exponential rate of the SHUS dynamics, and values of $d_{\rm sv}$ obtained at $\beta = 14$ by a graphical inspection (see Figure~\ref{fig:histo_d_sv}).}
\label{tab:SHUS_WL}      
\begin{tabular}{ccc}
\hline\noalign{\smallskip}
& $d_{\rm sv}$ & equivalent $\gamma_\star$ \\
\noalign{\smallskip}\hline\noalign{\smallskip}
$d=3$  & 1    & 1          \\
$d=6$  & 2    & 1.6 -- 1.8 \\
$d=12$ & 4    & 2.5 -- 3   \\
$d=24$ & 7 -- 8 & 5 -- 6   \\
\noalign{\smallskip}\hline
\end{tabular}
\end{table}

\section{Discussion, perspectives and extensions}
\label{sec:SHUS_WL}

\subsection{Comparison of SHUS and the Wang-Landau algorithm}

In this section, we would like to summarize our findings about the
comparison between SHUS and the Wang-Landau algorithm.
As explained in Section~\ref{sec:1}, SHUS can be seen as a
Wang-Landau algorithm with the stepsize sequence  $(\gamma_n)_{n
  \geq 1}$ defined by~\eqref{eq:def:stepsize}. From Corollary~\ref{cor:asumptpas},
we thus expect that SHUS behaves in the
longtime regime like the Wang-Landau algorithm with stepsize
$\gamma^{\rm WL}_n= \frac{d}{n}$. These
predictions drawn from our theoretical analysis have been confirmed in
the previous section by numerical experiments. Consistently, it has been shown
that in terms of first exit times from a metastable state, SHUS and the
Wang-Landau algorithm with $\gamma^{\rm WL}_n= \frac{\gamma_\star}{n}$
have similar behaviors, $\gamma_\star$ being close to the number of strata
visited in the metastable state containing the initial condition~$X_0$. 

We have observed numerically that the average exit
time out of a metastable state for the SHUS algorithm is not drastically modified when
changing the numerical parameters $d$, $\sigma$ and $\gamma$ (see
Tables~\ref{tab:SHUS_d},~\ref{tab:SHUS_dx}  and~\ref{tab:SHUS_gamma}
where $\mu$ remains approximately constant). Moreover, it is known
that, in the longtime regime, the choice $\gamma_\star=d$ is the optimal
one for Wang-Landau for stepsize sequences of the form
$\frac{\gamma_\star}{n}$ in terms of asymptotic variance of the weight
sequence, see the discussion after Theorem 3.6 in~\cite{fort:jourdain:kuhn:lelievre:stoltz:2014}. This can be
seen as advantages of SHUS over Wang Landau for which, in particular,
a substantial increase in the exponential
rate of the exit time is observed when $d$ is increased while
$\gamma_\star$ is fixed (see~\cite[Table~1]{amrx}).

On the downside, we observe that the scaling $\gamma^{\rm WL}_n \sim
\frac{\gamma_\star}{n}$ (when $n \to \infty$) is usually not the best one in
terms of efficiency. As explained in~\cite{fort:jourdain:kuhn:lelievre:stoltz:2014}, convergence is also obtained for
larger stepsizes $\gamma^{\rm WL}_n=
\frac{\gamma_\star}{n^\alpha}$ with $\alpha \in (1/2,1)$, which allow
much smaller average exit times from \linebreak metastable states,
see~\cite[Figure 3]{amrx}. In this respect, SHUS is not the most efficient
Wang-Landau type algorithm.

\subsection{Accelerating SHUS}
\label{sec:acc}

In view of the efficiency results of the Wang-Landau dynamics
(see~\cite{amrx} for an analysis of the exit times from metastable states) and
according to general prescriptions for stochastic approximation
algorithms, it seems better to aim for a stepsize sequence
$(\gamma_n)_{n \geq 1}$ which decreases at the slowest possible rate
while still guaranteeing convergence. In the polynomial schedule, this
means that $\gamma_n = \gamma_\star /n^\alpha$ with $\alpha$ close to
$1/2$ rather than $\alpha=1$.
However, it is also known that the asymptotic variance scales as
$\gamma_n$  (see for
example~\cite[Theorem~3.6]{fort:jourdain:kuhn:lelievre:stoltz:2014})
which calls for a very fast decaying stepsize sequence
$(\gamma_n)_{n \geq 1}$ while still guaranteeing convergence. A good practical compromise
is then to combine the stochastic approximation algorithms
with an averaging technique~\cite{fort:2014,polyak-juditsky-92}. This allows to use a large stepsize
sequence (which yields good exploration properties) while keeping a
variance of the optimal order $1/n$.

In view of the above discussion, a natural question is whether SHUS can
be modified in order to obtain an effective stepsize sequence which
scales like $n^{-\alpha}$ with $\alpha \in (1/2,1)$.
A possible way of doing so is to modify the updating rule~\eqref{eq:def:thetatilde} as
% \begin{equation*}
% %  \label{eq:tu_p}
%   \tu_{n+1}(i)^{1/\alpha} = \tu_n(i)^{1/\alpha} + \pas \, \tn_n(i)^{1/\alpha} \, \un_{\Xset_i}(X_{n+1}),
% \end{equation*}
% or equivalently
 \begin{equation}
 \label{eq:tu_p}
\tu_{n+1}(i) = \tu_n(i)\left(1+\frac{\gamma(\alpha) \un_{\Xset_i}(X_{n+1})}{\ln\left(1+\sum_{j=1}^d\tu_n(j)\right)^{\frac{\alpha}{1-\alpha}}}\right)
\end{equation}
for some positive deterministic $\gamma(\alpha)$. 
We call SHUS$^\alpha$ the algorithm which consists in choosing deterministic $X_0 \in \Xset$ and
$\tu_0=(\tu_0(1),\ldots,\tu_0(d))\in(\R_+^*)^d$ then iterating the following steps:
\begin{algo}\label{algoshusa}
 Given $(\tu_n,X_n)\in
(\R_+^*)^d\times \Xset$,
\begin{itemize}
   \item compute %% the normalizing constant 
     %% \begin{equation}
     %%   \label{eq:def:bigthetatilde}
     %%   \tilde{\Theta}_n \eqdef \sum_{i=1}^d \tilde{\theta}_n(i)
     %% \end{equation}
     %% and
     the probability measure on $\{1,\ldots,d\}$,
     \begin{equation*}
    \tn_n = \frac{\tu_n}{\sum_{j=1}^d \tu_n(j)} \in \Theta\eqsp,
     \end{equation*}
   \item draw $X_{n+1}$ according to the kernel $P_{\tn_n}(X_{n},\cdot)$,
   \item compute, for all $i \in \{1, \ldots d\}$, $\tu_{n+1}(i)$ given by \eqref{eq:tu_p}.
  \end{itemize}
\end{algo}SHUS$^\alpha$ can be seen as a Wang-Landau algorithm with
nonlinear update of the weights and with stochastic stepsizes
\begin{equation}\label{eq:gammanSHUSalpha}
\text{for $n\in\N$}, \,
\gamma_{n+1}=\frac{\gamma(\alpha)}{\ln\left(1+\sum_{j=1}^d\tu_n(j)\right)^{\frac{\alpha}{1-\alpha}}}.
\end{equation}

\begin{proposition}\label{propshusa}
  Under A\ref{hyp:targetpi} and A\ref{hyp:kernel}, for each
    $\alpha\in (\frac{1}{2},1)$, the SHUS$^\alpha$ Algorithm~\ref{algoshusa} converges in the sense of Definition 
\ref{defconvalgo}. Moreover its stepsize sequence $(\gamma_n)_{n\geq 1}$ defined by~\eqref{eq:gammanSHUSalpha} satisfies 
\begin{equation}
\label{eq:limit_gamma_n_shus_alpha}
\P\left(
\lim_{n \to \infty} n^\alpha \gamma_n=
\gamma(\alpha)^{1-\alpha}d^\alpha(1-\alpha)^{\alpha}\right)=1.
\end{equation}
\end{proposition}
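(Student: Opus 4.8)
The plan is to treat SHUS$^\alpha$ exactly as SHUS was treated. Writing $S_n=\sum_{j=1}^d\tu_n(j)$ and $b=\frac{\alpha}{1-\alpha}$ (so that $b+1=\frac{1}{1-\alpha}$ and $\frac{b}{b+1}=\alpha$), Algorithm~\ref{algoshusa} is Algorithm~\ref{algogen} run with the predictable, non-increasing stepsize $\gamma_{n+1}=\gamma(\alpha)/(\ln(1+S_n))^{b}$ of \eqref{eq:gammanSHUSalpha}. I would first verify the hypotheses of Theorem~\ref{theo:cvggal} for this sequence — the analogue of Proposition~\ref{lem:bounds:BigThetaTilde} — to obtain convergence in the sense of Definition~\ref{defconvalgo}, and then read off the sharp asymptotics \eqref{eq:limit_gamma_n_shus_alpha} from \eqref{theo:ergoLLN}, as in Corollary~\ref{cor:asumptpas}.

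\smallskip
\noindent\textbf{Easy parts.} Monotonicity is immediate: $S_{n+1}-S_n=\gamma_{n+1}\tu_n(I(X_{n+1}))>0$ makes $(S_n)_n$ increasing, hence $(\gamma_n)_n$ non-increasing. For $\sum_n\gamma_n=+\infty$ I would use $\tu_n(I(X_{n+1}))\leq S_n$, giving $\ln S_n\leq\ln S_0+\sum_{k=1}^n\gamma_k$; if the series converged $(S_n)_n$ would be bounded and $(\gamma_n)_n$ bounded away from $0$, a contradiction. For a deterministic vanishing majorant, I would use $\tu_n(I(X_{n+1}))\geq m_0:=\min_{1\leq i\leq d}\tu_0(i)>0$ together with the increasing primitive $\Phi(s)=\int_{S_0}^s(\ln(1+u))^b\,\rmd u$: since the integrand increases, $\Phi(S_{k+1})-\Phi(S_k)\geq(\ln(1+S_k))^b(S_{k+1}-S_k)\geq\gamma(\alpha)m_0$, so $\Phi(S_n)\geq n\gamma(\alpha)m_0$ forces $S_n\geq\underline s_n:=\Phi^{-1}(n\gamma(\alpha)m_0)\to+\infty$ deterministically, whence $\gamma_n\leq\overline\gamma_n:=\gamma(\alpha)/(\ln(1+\underline s_{n-1}))^b\to 0$.

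\smallskip
\noindent\textbf{The main obstacle.} The remaining and hardest condition is $\sum_n\gamma_n^2<+\infty$. The crude bound above only gives $S_n\gtrsim n/(\ln n)^b$, i.e. $\gamma_n\lesssim(\ln n)^{-b}$, which is \emph{not} square-summable; what is really needed is the stretched-exponential growth $(\ln(1+S_n))^{b+1}\gtrsim n^{\delta}$ with $\delta>\frac{1}{2\alpha}$, since then $\gamma_{n+1}=\gamma(\alpha)\big((\ln(1+S_n))^{b+1}\big)^{-\alpha}\lesssim n^{-\alpha\delta}$ with $2\alpha\delta>1$. This in turn requires the increments $S_{n+1}-S_n=\gamma_{n+1}\,\theta_n(I(X_{n+1}))\,S_n$ to be truly of order $S_n$, i.e. $\theta_n(I(X_{n+1}))$ to remain bounded below sufficiently often — which is almost the convergence $\tn_n\to\tstar$ we are after. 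I expect this to be the crux. The way around the circularity, exactly as for SHUS, is that the deterministic majorant $\overline\gamma_n\to0$ is the \emph{only} stepsize assumption required by the recurrence Proposition~\ref{prop:stabilite}; invoking it yields $T_k\leq C_Tk$ and $\limsup_k\underline{\theta}_{T_k-1}>0$ almost surely, from which, by adapting the argument of Proposition~\ref{lem:bounds:BigThetaTilde}, one should obtain a random exponent $\delta>\frac{1}{2\alpha}$ with $\sum_{k=1}^n\theta_{k-1}(I(X_k))\gtrsim n^{\delta}$, hence $\sum_n\gamma_n^2<+\infty$ a.s. Theorem~\ref{theo:cvggal} then delivers convergence.

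\smallskip
\noindent\textbf{Sharp asymptotics.} Finally, with convergence in hand, I would apply \eqref{theo:ergoLLN} with $f\equiv1$, using $\sum_j\theta_{k-1}(j)\un_{\Xset_j}(X_k)=\theta_{k-1}(I(X_k))$, to get $\frac1n\sum_{k=1}^n\theta_{k-1}(I(X_k))\to\frac1d$ a.s. Setting $F(s)=(\ln(1+s))^{b+1}$, a mean-value expansion combined with $S_k/S_{k-1}=1+\gamma_k\theta_{k-1}(I(X_k))\to1$ and $S_{k-1}\to\infty$ gives
\[
F(S_k)-F(S_{k-1})=(b+1)\gamma(\alpha)\,\theta_{k-1}(I(X_k))\,(1+o(1)),
\]
so Cesàro summation yields $\frac1n(\ln(1+S_n))^{b+1}\to\frac{(b+1)\gamma(\alpha)}{d}=\frac{\gamma(\alpha)}{d(1-\alpha)}$ a.s. Substituting into $\gamma_{n+1}=\gamma(\alpha)\big((\ln(1+S_n))^{b+1}\big)^{-\alpha}$ and simplifying gives $n^\alpha\gamma_{n+1}\to\gamma(\alpha)\big(\tfrac{\gamma(\alpha)}{d(1-\alpha)}\big)^{-\alpha}=\gamma(\alpha)^{1-\alpha}d^\alpha(1-\alpha)^\alpha$; since $(n+1)^\alpha/n^\alpha\to1$, the same limit holds for $n^\alpha\gamma_n$, which is \eqref{eq:limit_gamma_n_shus_alpha}.
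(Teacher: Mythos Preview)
Your overall architecture is exactly the paper's: verify the hypotheses of Theorem~\ref{theo:cvggal} (this is the role of the paper's Lemma~\ref{lem:shusa}) and then extract the sharp limit from~\eqref{theo:ergoLLN} with $f\equiv 1$. The monotonicity, the contradiction argument for $\sum_n\gamma_n=+\infty$, the deterministic majorant via $\Phi$, and the sharp-asymptotics computation via $F(s)=(\ln(1+s))^{b+1}$ are all valid; the paper carries out the last step by an explicit two-stage Taylor expansion of $\ln(1+S_{n+1})$ and then of $(\ln(1+S_{n+1}))^{b+1}$ with controlled remainders, but your mean-value/Ces\`aro argument reaches the same conclusion.

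There is, however, a real gap in the ``main obstacle'' paragraph: you never establish a deterministic \emph{upper} bound on $\ln(1+S_n)$, i.e.\ a \emph{lower} bound $\gamma_n\gtrsim n^{-\alpha}$, and without it the adaptation of Proposition~\ref{lem:bounds:BigThetaTilde} does not go through. The paper derives it first: from $\theta_n(I(X_{n+1}))\leq 1$ one gets $\ln(1+S_{n+1})\leq\ln(1+S_n)+\gamma(\alpha)(\ln(1+S_n))^{-b}$, so $l_n:=(\ln(1+S_n))^{b+1}$ satisfies $l_{n+1}\leq l_n+C$ and hence $\ln(1+S_n)\leq\bar c\,(n+1)^{1-\alpha}$, giving $\gamma_n\geq c\,n^{-\alpha}$. (You could equally well obtain this from your own function $F$: for $S_{k-1}$ past a fixed deterministic threshold $F$ is concave and $F(S_k)-F(S_{k-1})\leq F'(S_{k-1})(S_k-S_{k-1})\leq(b+1)\gamma(\alpha)$, whence $F(S_n)\lesssim n$.) This lower bound on $\gamma_n$ is precisely what feeds into~\eqref{evolmintt}: combined with $T_j\leq C_Tj$ it yields $\ln\underline{\tu}_{T_k}\geq\ln\underline{\tu}_0+c\sum_{j=1}^k j^{-\alpha}\gtrsim k^{1-\alpha}$, hence $\underline{\tu}_n\gtrsim\exp(c_1 n^{1-\alpha})$, and inserting this back into $S_{n+1}-S_n\geq\gamma(\alpha)\underline{\tu}_n/(\ln(1+S_n))^{b}$ together with the same upper bound on $\ln(1+S_n)$ gives $S_n\gtrsim\exp(c_1 n^{1-\alpha})$, i.e.\ $\gamma_n\lesssim n^{-\alpha}$, which is square-summable for $\alpha>\tfrac12$.

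By contrast, your stated route via $\sum_{k=1}^n\theta_{k-1}(I(X_k))\gtrsim n^\delta$ is not clearly derivable from Proposition~\ref{prop:stabilite} alone: the recurrence statement~\eqref{proprec} only gives a $\limsup$ along the return times, which says nothing about the density of the set where $\underline{\theta}_n$ stays bounded below, and the bound $\theta_{k-1}(I(X_k))\geq\underline{\theta}_{k-1}$ does not help without further control. The paper avoids this by tracking the unnormalized minimum $\underline{\tu}_n$ (which is nondecreasing) rather than the normalized $\underline{\theta}_n$, and this is exactly where the lower bound on $\gamma_n$ is needed.
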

In particular, the stepsize sequence scales like $n^{-\alpha}$ as wanted.

\begin{remark}[Relationship with the standard SHUS algorithm]
   To relate the updating rule~\eqref{eq:tu_p} with the one of SHUS~\eqref{eq:def:thetatilde}, notice that~\eqref{eq:def:thetatilde} also writes
\begin{align*}
   \tu_{n+1}(i)& = \tu_n(i)\left(1+\frac{\gamma\un_{\Xset_i}(X_{n+1})}{\sum_{j=1}^d\tu_n(j)}\right)\\&=\tu_n(i)\left(1+\frac{\gamma \un_{\Xset_i}(X_{n+1})}{{\dps \lim_{\alpha\to 1^-}}f\left(\alpha,\sum_{j=1}^d\tu_n(j)\right)}\right)
\end{align*}
where for $\alpha\in(1/2,1)$ and
$s>0$, 
\[
f(\alpha,s)=\exp\left(\frac{\alpha}{1-\alpha}\ln\Big[1+(1-\alpha)\ln(1+s)\Big]\right)-1.
\]
Note that, for a fixed $\alpha\in(1/2,1)$ and in the limit $s\to +\infty$,
\[
f(\alpha,s)\sim \Big((1-\alpha)\ln(1+s)\Big)^{\frac{\alpha}{1-\alpha}}.
\]
SHUS$^\alpha$ is therefore expected to have the same asymptotic behavior in the
limit $n\to +\infty$ as the algorithm based on the more complicated updating rule
\begin{equation}\label{eq:tu_p_prime}
  \tu_{n+1}(i) = \tu_n(i)\left(1+\frac{\gamma(\alpha)
    (1-\alpha)^{\frac{\alpha}{1-\alpha}}\,\un_{\Xset_i}(X_{n+1})}{f\left(\alpha,\sum_{j=1}^d\tu_n(j)\right)}\right).
\end{equation}
Notice that, with the choice,
\begin{equation}
  \label{eq:gamma_alpha}
  \gamma(\alpha)=(1-\alpha)^{-\frac{\alpha}{1-\alpha}} \gamma,
\end{equation}
the updating rule~\eqref{eq:tu_p_prime}
converges to the SHUS updating rule~\eqref{eq:def:thetatilde} when
$\alpha \to 1^-$. In addition, from Proposition~\ref{propshusa},
$\lim_{n \to \infty} n^\alpha \gamma_n=
\gamma^{1-\alpha}d^\alpha$ almost surely. The limiting value $\gamma^{1-\alpha}d^\alpha$ converges to $d$ when
$\alpha \to 1^-$  which is also consistent with what we obtained for the
original SHUS algorithm, see Corollary~\ref{cor:asumptpas}.
\end{remark}

%------------- numerique ici --------------------
\subsubsection*{Numerical results}

We present in this section numerical results showing that the modified SHUS algorithm (Algorithm~\ref{algoshusa}) with parameter $\alpha \in (1/2,1)$ behaves very similarly to the Wang-Landau algorithm with stepsizes scaling as~$n^{-\alpha}$ as $n \to +\infty$. We choose $\gamma(\alpha)$ according to~\eqref{eq:gamma_alpha} in all cases.

In order to implement the modified SHUS algorithm, some care has to be taken in order to avoid overflows related to large values of the normalization factor \linebreak $\sum_{i=1}^d \tu_n(i)$. Prohibitively large numbers can be avoided by first representing the weighted occupation factors in logarithmic scale as $\nu_n(i) = \ln \tu_n(i)$. Second, in order to avoid the uncontrolled increase of $\nu_n(i)$ (which may be quite fast for large values of~$\beta$ and values of~$\alpha$ close to $1/2$), we renormalize the factors at random stopping times where $\sum_{i=1}^d \tu_n(i)$ is greater than a given (large) value $M > 0$. The weight sequence renormalized at these random stopping times is denoted by $\nu_n^M(i)$. The random stopping times are defined as $\tau_0 = 0$, and
\[
\tau_k = \inf \left\{ n > \tau_{k-1} \, : \, \sum_{i=1}^d \mathrm{e}^{\nu_n^M(i)} \geq M \right\} \qquad (k \geq 1).
\]
The renormalized factors $\nu_n^M(i)$ evolve according to the following updating rule (obtained by taking the logarithm of~\eqref{eq:tu_p}, and possibly subtracting a renormalization factor):
\[
\nu_{n+1}^M(i) = \nu_n^M(i) + \ln\left(1+\gamma^M_{n+1} \un_{\Xset_i}(X_{n+1}) \right)- \sigma_n \ln M.
\]
In this expression, $\sigma_n = 0$ if $n \neq \tau_1,\dots,\tau_k,\dots$ while $\sigma_n = 1$ if there exists $k \geq 1$ such that $\tau_k = n$. In addition, the stepsize is
\[
\gamma_{n+1}^M = \frac{\gamma(\alpha) }{\left[ \ln\left(M^{-r_n} + \sum_{j=1}^d\mathrm{e}^{\nu^M_n(j)}\right) + r_n \ln M \right]^{\frac{\alpha}{1-\alpha}}},
\]
where $r_n = \sum_{m=1}^n \sigma_m$ counts the number of times where the weights have been renormalized up to the iteration index~$n$. The logarithmic normalized weights $\ln \tn_n(i)$ are then constructed from the renormalized occupation measures in logarithmic scale $\nu_n^M(i)$ as
\[
\ln \tn_n(i) = \nu_n^M(i) -\ln \left(\sum_{j=1}^d \mathrm{e}^{\nu_n^M(j)}\right).
\]
The sequence of visited states $(X_n)_{n \geq 1}$ and the sequence of weights $(\tn_n)_{n \geq 1}$ do not depend on the value of~$M$, as long as $M$ is in the range of number which can be represented on a computer. In the simulations reported below, we chose $M = 10^{10}$. In addition, in order to have a well-behaved acceptance procedure in the Metropolis step, we compute the probability to accept the proposed move in logarithmic scale. More precisely, the proposed move $\widetilde{X}_{n+1}$ drawn from the proposal kernel starting from~$X_n$ is accepted when
\[
\begin{aligned}
\ln U^n & \leq \ln \pi\left(\widetilde{X}_{n+1}\right) - \ln \pi(X_n) \\
& \quad - \nu_n^M\left( I\left(\widetilde{X}_{n+1}\right) \right) + \nu_n^M\left( I\left(X_{n}\right) \right),
\end{aligned}
\]
where $U^n$ is a sequence of i.i.d. uniform random variables on~$[0,1]$.

We first check whether the effective stepsizes $\gamma_n$ behave as predicted by Proposition~\ref{propshusa}. To this end, we simulate the modified SHUS algorithm, using increments sampled according to isotropic independent Gaussian random variables with variance~$\sigma^2$ to generate proposal moves in the Metropolis-Hastings algorithm. 
% cf. uniform moves need a sufficient range in order to have a correct dynamics in the large \beta limit... if range not sufficient to go to another stratum, then metastability inside the stratum prevents escapes
The value of the renormalized stepsizes
\[
\frac{n^\alpha \gamma_n}{d^\alpha (1-\alpha)^\alpha\gamma(\alpha)^{1-\alpha}} = \left(\frac{n}{d}\right)^\alpha \frac{\gamma_n}{\gamma}
\]
are plotted in Figure~\ref{fig:eff_time_step_mod_SHUS}. As expected, the longtime limit is~1 whatever the value of~$\alpha$. However, the larger~$\alpha$ is, the longer it takes to attain the asymptotic regime. This is due to the fact that exit times out from the left well of the potential are typically increasing as $\alpha$ is increased. 

We study more precisely the behavior of the exit times $t_\beta$ out from the left well in the limit of low temperatures (large $\beta$), following the procedure described in Section~\ref{sec:scalings}, for various values of~$\alpha \in (1/2,1)$. The data presented in Figure~\ref{fig:exit_time_mod_SHUS} have been fitted to power laws
\begin{equation}
\label{eq:predicted_scaling_mod_shus}
t_\beta \sim C_\alpha t^{\mu_\alpha}.
\end{equation}
Such a power law scaling indeed accounts for the behavior of exit times for the standard Wang-Landau algorithm with stepsizes $\gamma_n^{\rm WL} = \frac{\gamma_\star}{n^\alpha}$, and it can be proved that
\begin{equation}
\label{eq:theoretical_mu_alpha}
\mu_\alpha = \frac{1}{1-\alpha}
\end{equation}
for a simple model system (see~\cite{amrx}). This claim is also backed up in~\cite{amrx} by numerical experiments on the same model as the one considered in this work. The powers $\mu_\alpha$ reported in Table~\ref{tab:mod_SHUS}, obtained by a least-square minimization in log-log scale, agree very well with the theoretical prediction~\eqref{eq:theoretical_mu_alpha}. This indicates that SHUS$^\alpha$ is very close to Wang-Landau with stepsizes $\gamma_n = \gamma/n^\alpha$.

\begin{figure}
  \includegraphics[width=0.5\textwidth]{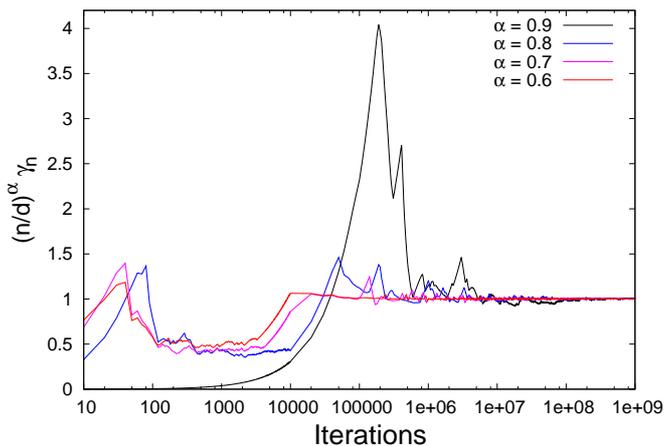}
  \caption{\label{fig:eff_time_step_mod_SHUS} Behavior of the sequence~$((n/d)^\alpha \gamma_n)_{n \geq 1}$ for $d=24$, $\sigma = 2R/d = 0.1$, $\gamma = 1$ and $\beta = 10$, for various values of~$\alpha$. The longtime limit is in all cases~1, as predicted by Proposition~\ref{propshusa}.
  }
\end{figure}

\begin{figure}
  \includegraphics[width=0.5\textwidth]{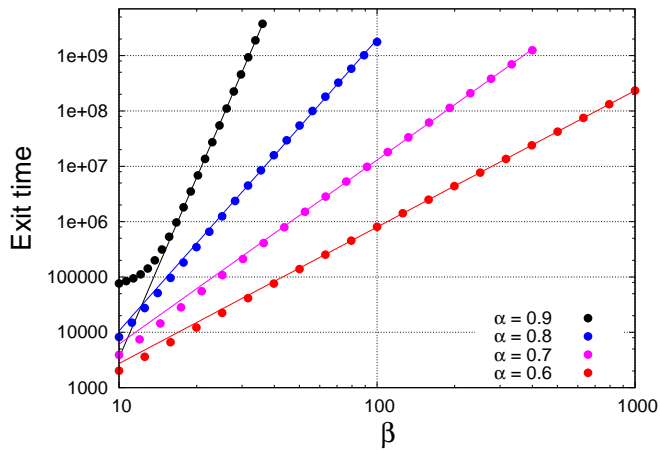}
  \caption{\label{fig:exit_time_mod_SHUS} 
    Computed exit times $t_\beta$ for the modified SHUS algorithm with various choices of the power~$\alpha$, and the parameters $d=24$, $\sigma = 2R/d = 0.1$, $\gamma = 1$. A power law $t_\beta \sim C_\alpha \beta^{\mu_\alpha}$ (plotted in solid lines) is observed in all cases. Estimated powers are reported in Table~\ref{tab:mod_SHUS}.}
\end{figure}

\begin{table}
\caption{Scaling laws~\eqref{eq:predicted_scaling_mod_shus} obtained from the data presented in Figure~\ref{fig:exit_time_mod_SHUS}.}
\label{tab:mod_SHUS}      
\begin{tabular}{ccc}
\hline\noalign{\smallskip}
$\alpha$ & predicted value $(1-\alpha)^{-1}$ & numerical fit $\mu_\alpha$\\
\noalign{\smallskip}\hline\noalign{\smallskip}
0.6  &  2.50 & 2.47 \\
0.7  &  3.33 & 3.33 \\
0.8  &  5    & 5.27 \\
0.9  &  10   & 10.8 \\
\noalign{\smallskip}\hline
\end{tabular}
\end{table}

We next study the convergence of the logarithmic weights $\ln \tn_n(i)$. The empirical variance of the logarithmic weights at iteration~$n$ is estimated by running $K$ independent realizations of the modified SHUS dynamics, as for the estimation of exit times. Denoting by $(\ln \tn_n^k(i))_{n \geq 1}$ the weight sequence of the $i$th stratum for the $k$th realization, the empirical variance of the weight for the stratum~$i$ is estimated as
\[
\mathscr{V}_{n,K}(i) = \frac{1}{K-1} \sum_{k=1}^K \Big( \ln \tn_n^k(i) - \mathscr{M}_{n,K}(i)\Big)^2, 
\]
where the empirical mean $\mathscr{M}_{n,K}(i)$ at time~$n$ is
\[
\mathscr{M}_{n,K}(i) = \frac{1}{K} \sum_{k=1}^K \ln \tn_n^k(i).
\]
We expect that $\mathscr{V}_{n,K}(i)$ scales as $\gamma_n$ in the longtime limit for $K$ sufficiently large. Such a result is proved for the Wang-Landau dynamics in~\cite{fort:jourdain:kuhn:lelievre:stoltz:2014}, where a central limit theorem is established for the weight sequence. In the sequel, we take $K = 7 \times 10^4$.

In all the computations reported below, the value $\alpha =1$ corresponds to the standard SHUS algorithm introduced in Section~\ref{sec:1}. The decrease of the empirical variance as a function of time is plotted in Figure~\ref{fig:sample_var}, together with a numerical fit $C_{{\rm var},i} n^{-a_i}$ obtained by a least-square minimization in log-log scale. The decay exponents~$a_i$ for each stratum~$i$ are reported in Figure~\ref{fig:slopes_var}. We indeed confirm that the empirical variance decreases as $n^{-\alpha}$, except in the case $\alpha = 0.9$ where the decrease is slightly faster than expected since the exponents $a_i$ are around~0.95. Note also that the asymptotic regime is attained only at longer times for the value $\alpha = 0.9$. This is related to the fact that $\gamma(\alpha)$ becomes very large for $\alpha$ close to~1 (here $\gamma(0.9) = 10^9$).

\begin{figure}
  \includegraphics[width=0.5\textwidth]{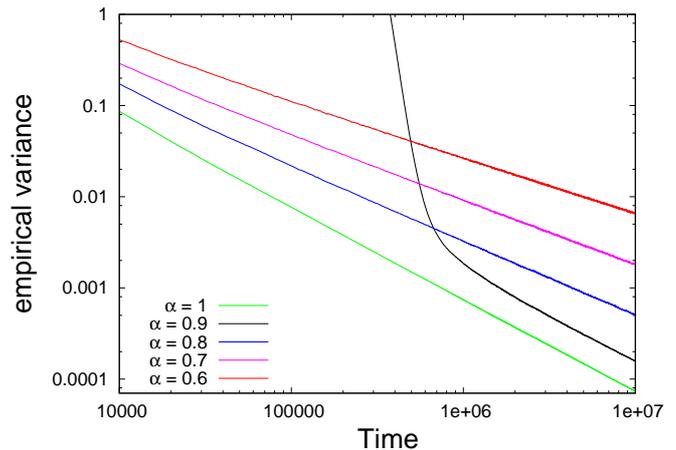}
  \caption{\label{fig:sample_var} 
    Decrease of the empirical variance~$\mathscr{V}_{n,K}(i)$ as a function of the time~$n$, for $d=24$, $\beta = 1$, $\sigma = 2R/d = 0.1$ and $i=3$. The value $\alpha =1$ corresponds to the standard SHUS algorithm introduced in Section~\ref{sec:1}.}
\end{figure}

\begin{figure}
  \includegraphics[width=0.5\textwidth]{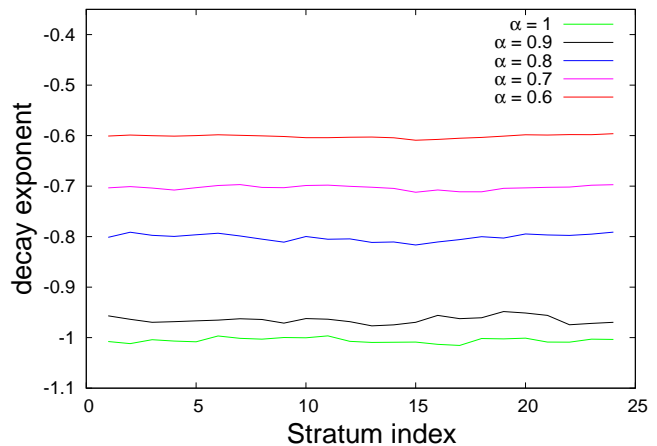}
  \caption{\label{fig:slopes_var} 
    Decay exponents $a_i$ in the fit of the empirical variances $\mathscr{V}_{n,K}(i) \sim C_{{\rm var},i} n^{-a_i}$, for various values of~$\alpha$ (same parameters as in Figure~\ref{fig:sample_var}). }
\end{figure}

We finally consider the decrease of the bias in the estimated empirical averages $\mathscr{M}_{n,K}(i)$, as a function of time. We use a normalized version of the bias and average over the strata, and therefore introduce
\[
\mathscr{B}_{n,K} = \sqrt{\sum_{i=1}^d \left(\frac{\mathscr{M}_{n,K}(i)}{\ln \theta_*(i)}-1\right)^2}.
\]
The reference values $\theta_*(i)$ are computed with a two-dimensional numerical quadrature. The decrease of the bias as a function of time is plotted in Figure~\ref{fig:error_bias}, together with a numerical fit $C_{\rm bias} n^{-a}$ obtained by a least-square minimization in log-log scale. The decay exponents~$a$ are reported in Table~\ref{tab:error_bias}. The bias approximately decrease at the same rate as the variance, namely $n^{-\alpha}$, a standard behavior for Monte-Carlo methods. Here again, the asymptotic behavior for the value $\alpha=0.9$ is observed at longer times only because of the large value of $\gamma(\alpha)$.

\begin{figure}
  \includegraphics[width=0.5\textwidth]{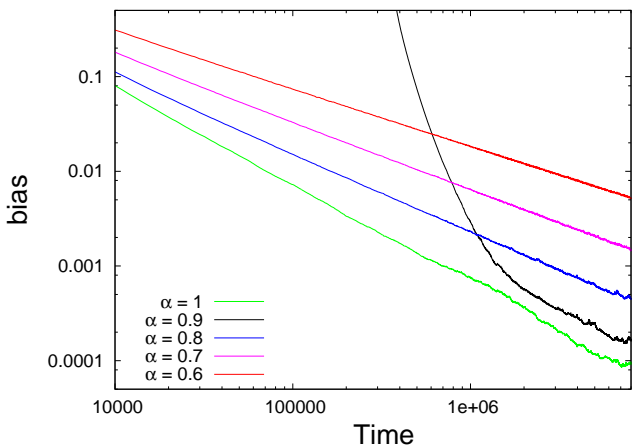}
  \caption{\label{fig:error_bias} Decay of the bias $\mathscr{B}_{n,K}$ as a function of the iteration time~$n$, for $d=24$, $\beta = 1$ and $\sigma = 2R/d = 0.1$. The corresponding decay exponents are reported in Table~\ref{tab:error_bias}.
  }
\end{figure}

\begin{table}
\caption{Decay of the bias $\mathscr{B}_{n,K} \sim C_{\rm bias} n^{-a}$ fitted on the data presented in Figure~\ref{fig:error_bias} for iterations times~$n$ in the range $2 \times 10^6 \leq n \leq 8 \times 10^6$. }
\label{tab:error_bias}      
\begin{tabular}{cc}
\hline\noalign{\smallskip}
$\alpha$ & numerical fit $a$\\
\noalign{\smallskip}\hline\noalign{\smallskip}
0.6  & 0.60  \\
0.7  & 0.69  \\
0.8  & 0.78  \\
0.9  & 0.93  \\
1 & 1.01  \\
\noalign{\smallskip}\hline
\end{tabular}
\end{table}

\subsection{Partially biased dynamics}

It is possible to consider more general biased measures than $\pi_\tn$ defined in~\eqref{eq:def:PiTheta} by applying only a fraction of the bias. This amounts to considering the following biased densities for a given parameter $a \in (0,1]$:
\begin{equation}
  \label{eq:def:PiTheta_alpha}
 \pi_{\tn,a}(x) =  \left( \sum_{j=1}^d \frac{\tstar(j)}{\tn(j)^a}
 \right)^{-1}  \sum_{i=1}^d  \frac{\pi(x)}{\tn(i)^a} \un_{\Xset_i}(x)
 \eqsp.
\end{equation}
The motivation for applying only a fraction of the bias is to avoid
having a random walk among the strata in the asymptotic regime, in
order to
favor the strata which are more likely under the original, unbiased
measure. This idea was first proposed within the so-called
well-tempered metadynamics method\footnote{With the notation of the
  metadynamics works, what we call here $a$ is denoted $\Delta T/(T +
  \Delta T)$ where $T$ is the temperature and $\Delta T > 0$ is a parameter. The limiting regime $a = 1$ is recovered in the limit $\Delta T \to +\infty$, which corresponds to the standard metadynamics~\cite{laio-parrinello-02,bussi-laio-parrinello-06}.}, introduced in~\cite{barducci-bussi-parrinello-08}. 

Following the reasoning of Section~\ref{secdesal}, Algorithm~\ref{algoshus} should then be modified as follows: 
\begin{algo}Given $(\tu_n,X_n)\in (\R_+^*)^d\times \Xset$,\label{algowtmd}
   \begin{itemize}
   \item compute  the probability measure on $\{1,\ldots,d\}$,
     \begin{equation*}
       \tn_n = \frac{\tu_n}{\sum_{j=1}^d \tu_n(j)} \in \Theta\eqsp,
     \end{equation*}
   \item draw $X_{n+1}$ according to the kernel
     $P_{\tn_n,a}(X_{n},\cdot)$ where, for any $\tn \in \Theta$, $P_{\tn,a}$ is a
     transition kernel ergodic with respect to~$\pi_{\tn,a}$ ,
   \item compute, for all $i \in \{1, \ldots d\}$,
\begin{equation}
  \label{eq:update_tu_n_alpha}
  \tu_{n+1}(i) = \tu_n(i) + \pas \,  \tn_n(i)^a \,
  \un_{\Xset_i}(X_{n+1}) \eqsp .
\end{equation}
\end{itemize}
\end{algo}

It is possible to follow the same reasoning as in
Section~\ref{sec:asymptotic_consistency} to check that the only
possible limit for the sequence $(\tn_n)_{n \geq 0}$ is $\theta_\star$.
% The weight update~\eqref{eq:theta:approxsto} is modified as 
% \[
% \tn_{n+1}(i) = \tn_n(i) + \gamma_{n+1} \, H_{i,a}(X_{n+1}, \tn_n) +
% \gamma_{n+1} \,  \Lambda_{n+1,a}(i) \eqsp, 
% \]
% where $H_a: \Xset \times \Theta \to \R^d$ is defined by
% \[
% H_{i,a}(x,\tn) = \tn(i)^a \un_{\Xset_i}(x) - \tn(i)\tn(I(x))^a,
% \]
% with mean $h_a(\tn)=\frac{\theta_\star-\theta}{\sum_{i=1}^d\frac{\theta_\star(i)}{\theta(i)^a}}$
% and 
% \[
% \begin{aligned}
%   \Lambda_{n+1,a}(i)  &= \gamma_{n+1} \ \tn_n(i) \ \tn_n(I(X_{n+1}))^a  \\
%   & \quad \times\frac{\tn_n(I(X_{n+1}))^a - \un_{\Xset_i}(X_{n+1}) \tn_n(i)^{a-1}}{1 + \gamma_{n+1} \, \tn_n(I(X_{n+1}))^a}\eqsp.
% \end{aligned}
% \]
% The definition~\eqref{eq:def:stepsize} of~$\gamma_{n+1}$ is
% unchanged. We therefore
In addition, $\gamma_n$ (defined
by~\eqref{eq:def:stepsize}) should scale as $1/n$ in the longtime
limit.
However, the stepsize sequence $(\gamma_{n})_{n \geq 1}$ needed to rewrite the updating rule \eqref{eq:update_tu_n_alpha} as a particular case of \eqref{eq:def:thetatildegen} is not predictable since $\gamma_n$ depends on~$X_n$:
\[
\gamma_n = \frac{\theta^a_{n-1}(I(X_{n}))}{\theta_{n-1}(I(X_{n}))} \, .
\] 
The convergence of this new algorithm therefore does not enter the framework of Theorem~\ref{theo:cvggal}.

\begin{remark}
The above algorithm with the modified
update~\eqref{eq:update_tu_n_alpha} is very much related to the
well-tempered metadynamics algorithm~\cite{barducci-bussi-parrinello-08}. The
main difference is that we here consider a discrete reaction
coordinate (associated with a partition of the state space) whereas
the standard well-tempered metadynamics method is formulated for continuous
reaction coordinates. As a matter of fact, in the
paper~\cite{dama-parrinello-voth-14}, the authors made the observation
that the well-tempered metadynamics method is a stochastic approximation
algorithm with a stepsize sequence of order $1/n$ (see in
particular~\cite[Equation~(5)]{dama-parrinello-voth-14}). 
% We believe that the formal 
% arguments presented in~\cite{dama-parrinello-voth-14} to prove the
% convergence of well-tempered metadynamics could be made rigorous by
% using techniques similar to those we used to study Wang-Landau and
% SHUS.

The well-tempered metadynamics algorithm is a \linebreak ``parameter-free'' version of the
original metadynamics algorithm~\cite{laio-parrinello-02,bussi-laio-parrinello-06}. The
original metadynamics algorithm consists in penalizing already visited states in a fashion very similar
to the Wang-Landau algorithm. In particular, the original
metadynamics also requires to choose a vanishing stepsize sequence, in
order to penalize less and less the visited states as time goes. One
of the reason why the well-tempered metadynamics algorithm has then been
proposed is to avoid the choice of this sequence. In the well-tempered
dynamics method (as in the SHUS dynamics), the penalization decreases as the
bias of the sampled point becomes larger. Roughly speaking, SHUS is a
parameter-free version of Wang-Landau, in the same way as
well-tempered metadynamics algorithm is a parameter-free version of metadynamics.
As explained above, the
parameter-free version corresponds to a specific choice of the
stepsize sequence, with a $1/n$ scaling for the strength of the
penalization which does not seem to be the optimal choice in terms
of exit times from metastable states, as discussed in Section~\ref{sec:acc}.
\end{remark}

%------------------------------------------------------------------

\section{Proofs}
\label{sec:proofs}
\subsection{Proof of Proposition \ref{prop:stabilite}}
For the
  Wang-Landau algorithm with linear update of the weights
  \eqref{eq:linear_update} and deterministic non-increasing stepsize sequence,
  \eqref{proprec} is proved in~\cite[Section
  4.2]{fort:jourdain:kuhn:lelievre:stoltz:2014}. It is explained
  in~\cite[Section~4.2.4]{fort:jourdain:kuhn:lelievre:stoltz:2014} how to adapt
  the proof to the Wang-Landau algorithm with nonlinear update of the weights
  and deterministic non-increasing stepsize sequence. In addition, a careful look at the
  arguments in \cite{fort:jourdain:kuhn:lelievre:stoltz:2014} shows that the
  randomness of the sequence $(\gamma_n)_{n \geq 1}$ plays no role in the proof of \eqref{proprec} as long as
  the conditional distribution of $X_{n+1}$ given ${\cal F}_n$ is given by
  $P_{\tn_n}(X_n,.)$ and the sequence $(\gamma_n)_{n \geq 1}$ is bounded from above by a deterministic sequence converging to $0$. Hence \eqref{proprec} as well as the existence (proved at the end of~\cite[Section
  4.2.1]{fort:jourdain:kuhn:lelievre:stoltz:2014} for the Wang-Landau algorithm with linear update) of a deterministic $p\in(0,1)$ such that \begin{equation}
    \label{eq:recomp_1}
    \forall k,m\in\N, \quad \P\Big(T_{k+1}-T_k>md \, \Big| \, {\cal F}_{T_k}\Big)\leq (1-p)^m  \eqsp,
  \end{equation}
still hold in the present framework.
Let us deduce from~\eqref{eq:recomp_1} that it is possible to couple a
  sequence $(\tilde{T}_k)_{k\geq 0}$ with the same law as $(T_k)_{k\geq 0}$
  with a sequence $(\tau_k)_{k\geq 1}$ of independent geometric random
  variables with parameter~$p$ in such a way that a.s. 
\begin{equation}\label{eq:Ttau}
\forall k\in\N,\,
  \tilde{T}_{k+1}-\tilde{T}_{k}\leq d\tau_{k+1}.
\end{equation} We can set
  $\tau_k=F^{-1}(U_k)$ where $F^{-1}$ denotes the c\`ag pseudo-inverse of the
  cumulative distribution function $F(x)=\un_{\{x\geq
    0\}}\left(1-(1-p)^{\lfloor x\rfloor}\right)$ of the geometric law with
  parameter $p$ and $(U_k)_{k\geq 1}$ is a sequence of independent uniform random
  variables on $[0,1]$. Now, define
  \[
  F_{(T_0,\ldots,T_k)}(x) = \P\left(\left.\frac{T_{k+1}-T_k}{d}\leq x\right|(T_0,\ldots,T_k)\right)
  \]
  for $k\geq 0$.  Since the random vector $(T_0,\ldots,T_k)$ is ${\mathcal
    F}_{T_k}$-measurable, for any $x \geq 0$, 
\begin{align*}
  F_{(T_0,\ldots,T_k)}(x)&\geq F_{(T_0,\ldots,T_k)}(\lfloor x\rfloor)
  \\
  & \hspace{-1.5cm} = 1-\E\Big[\P\big(T_{k+1}-T_k>\lfloor x\rfloor d \, \big|\,{\cal F}_{T_k}\big)\Big|(T_0,\ldots,T_k)\Big]\\ 
  & \hspace{-1.5cm} \geq
  1-(1-p)^{\lfloor x\rfloor}=F(x). \end{align*} where we used
\eqref{eq:recomp_1} for the second inequality. The sequence
$(\tilde{T}_k)_{k\geq 0}$ defined inductively by $\tilde{T}_0=0$ and
\[
\forall k\in\N, \quad \tilde{T}_{k+1}=\tilde{T}_k+d\,F^{-1}_{(\tilde{T}_0,\ldots,\tilde{T}_k)}(U_{k+1}),
\]
satisfies the required properties: it has the same law as $(T_k)_{k
  \geq 0}$ and it satisfies~\eqref{eq:Ttau}.

As a consequence,
\begin{align*}
  \P\left( \limsup_{k\to\infty}\frac{T_k}{k}\leq \frac{d}{p}\right) &=\P\left(
    \limsup_{k\to\infty}\frac{\tilde{T}_k}{k}\leq \frac{d}{p}\right) \\
  & \leq \P\left(\limsup_{k\to\infty}\frac{1}{k}\sum_{j=1}^k \tau_j\leq
    \frac{1}{p}\right)=1 \eqsp,
\end{align*}
by the strong law of large numbers for i.i.d.  random variables. One then easily concludes that \eqref{lgtk} holds.
\subsection{Proof of Theorem \ref{theo:cvggal}}
The proof of Theorem~\ref{theo:cvggal} is performed by extending the
  technique of proof used in~\cite{fort:jourdain:kuhn:lelievre:stoltz:2014} for
  the convergence of Wang-Landau. We therefore mention only the needed
  extensions, the main difference
  with~\cite{fort:jourdain:kuhn:lelievre:stoltz:2014} being the fact that the
  stepsizes $\gamma_n$ are not necessarily deterministic.

\medskip 

Note first that Lemma~4.6, Lemma~4.7 and Lemma~4.9 in
\cite{fort:jourdain:kuhn:lelievre:stoltz:2014} remain valid since they only
depend on the expression of $\pi_\tn$, $P_\tn$. Let us prove successively the three items in Definition \ref{defconvalgo}.

\textit{(i)} The proof of the first item consists in verifying the
sufficient conditions given in
\cite[Theorems~2.2. and~2.3]{andrieu:moulines:priouret:2005} for the
convergence of SA algorithms.
Define the mean field function $h: \Theta \to \R^d$ by
\[
h(\tn) = \int_{\Xset} H(x,\tn) \, \pi_\tn( \rmd x) =\frac{\tstar-\theta}{\sum_{i=1}^d\frac{\tstar(i)}{\theta(i)}}\eqsp.
\]
The function $h$ is continuous on $\Theta$. 
By~\cite[Proposition~4.5]{fort:jourdain:kuhn:lelievre:stoltz:2014}, 
the function $V$ defined on $\Theta$ by
\[
V(\tn) = - \sum_{i=1}^d \tstar(i) \ \ln\left(
  \frac{\tn(i)}{\tstar(i)}\right)
\]
is non negative, continuously differentiable on $\Theta$ and the level set
$\{\tn \in \Theta: V(\tn) \leq M \}$ is a compact subset of the open set $\Theta$ for
any $M>0$. We also have $\left<\nabla V(\tn),h(\tn)\right> \leq 0$ and
$\left<\nabla V(\tn),h(\tn)\right> = 0$ if and only if $\tn= \tstar$.
Hence, the assumption A1 of \cite{andrieu:moulines:priouret:2005} is verified
with $\mathcal{L} = \{\tstar \}$. 

Under our assumptions, the conditions on the stepsize sequence $(\gamma_n)_{n \geq 1}$ in~\cite[Theorems~2.2 and~2.3]{andrieu:moulines:priouret:2005} 
 hold almost-surely. To apply these theorems, it is enough to prove that 
\begin{equation}\label{lastcond}
   \lim_k \sup_{ \ell \geq k} \left| \sum_{n=k}^{\ell} \gamma_{n+1} \Big(
    H(X_{n+1},\tn_n) - h(\tn_n) + \Lambda_{n+1} \Big) \right| =0
\end{equation}
with probability one, where $\Lambda_{n+1}(i)$ is defined in~\eqref{eq:def:Lamndan}. Indeed \eqref{proprec}, \eqref{lastcond} and \cite[Theorems~2.2]{andrieu:moulines:priouret:2005} imply that Algorithm~\ref{algogen} is stable in the sense that a.s., the sequence $(\tn_n)_n$ remains in a compact subset of $\Theta$. Then \cite[Theorems~2.3]{andrieu:moulines:priouret:2005} ensures its a.s convergence to $\tstar$. 

Let us now check \eqref{lastcond}.
Since $0 \leq \tn_n(i) \leq 1$, it holds
$|\Lambda_{n+1}| \leq \gamma_{n+1}$.  Hence,
\[
\P\left( \forall k, \sup_{\ell \geq k} \left|\sum_{n=k}^\ell \gamma_{n+1}
    \Lambda_{n+1} \right|\leq  \ \sum_{n \geq k}\gamma_{n+1}^2 \right) =1
\eqsp
\]
so that (\ref{eq:stepsize:assumptions}) implies that $\sup_{\ell \geq k} \left|\sum_{n=k}^\ell \gamma_{n+1}
    \Lambda_{n+1} \right|$ converges to $0$ a.s. as $k\to\infty$.

To deal with $H(X_{n+1},\tn_n) - h(\tn_n)$, for each $\theta\in\Theta$, we introduce the Poisson equation 
\[
\forall x\in\Xset, \quad g(x) - P_\tn g(x) = H(x,\tn) - h(\tn)
\]
whose unknown is the function $g:\Xset\to\R$. 
Under the stated assumptions, this equation admits a solution $\hatH_\tn(x)$ which is unique up to an
additive constant and it holds (see {\it e.g.}~\cite[Lemma 4.9.]{fort:jourdain:kuhn:lelievre:stoltz:2014})
\begin{equation}
  \label{eq:bound:hatH}
  \sup_{\tn \in \Theta} \sup_{x \in \Xset} \left|\hatH_\tn(x)\right| < \infty \eqsp.
\end{equation}
We write \begin{align*} H(X_{n+1},\tn_n) - h(\tn_n) &=
  \hatH_{\tn_n}(X_{n+1})-P_{\tn_n}\hatH_{\tn_n}(X_{n+1})\\&=\mathcal{E}_{n+1} +
  R_{n+1}^{(1)} +R_{n+1}^{(2)},
\end{align*} with
\begin{align*}
  \mathcal{E}_{n+1} & =  \hatH_{\tn_n}(X_{n+1}) - P_{\tn_n}\hatH_{\tn_n}(X_{n})  \eqsp, \\
  R_{n+1}^{(1)} & = P_{\tn_n}\hatH_{\tn_n}(X_{n}) - P_{\tn_{n+1}}\hatH_{\tn_{n+1}}(X_{n+1}) \eqsp, \\
  R_{n+1}^{(2)} & = P_{\tn_{n+1}}\hatH_{\tn_{n+1}}(X_{n+1}) -
  P_{\tn_{n}}\hatH_{\tn_{n}}(X_{n+1}) \eqsp.
\end{align*}
Let us first check, using the ${\cal F}_n$-predictability of the sequence
$(\gamma_n)_{n \geq 1}$, that $M_k = \un_{\{k\geq 1\}}
\sum_{n=1}^k\gamma_n\mathcal{E}_n$ converges a.s. as $k\to\infty$, which will
imply that a.s.
\[
\lim_k \sup_{\ell \geq k} \left|\sum_{n=k}^\ell \gamma_{n+1} \mathcal{E}_{n+1}
\right|= 0 \ .
\]
Since $(\mathcal{E}_{n})_{n \geq 0}$ is bounded by $\sup_{\tn \in \Theta} \sup_{x \in
  \Xset} |\hatH_\tn(x)|$ and $(\gamma_n)_{n \geq 1}$ is bounded by $\overline{\gamma}_1$, for each $k$, 
\[
|M_k|\leq 2k\overline{\gamma}_1\sup_{\tn \in \Theta} \sup_{x \in \Xset} \left|\hatH_\tn(x)\right|
\]
and $M_k$ is square integrable by \eqref{eq:bound:hatH}.
Moreover, $\gamma_{n+1}$ is ${\cal F}_n$-measurable and the conditional
distribution of $X_{n+1}$ given ${\cal F}_n$ is $P_{\tn_n}(X_n,\cdot)$, so that
\begin{multline*}
  \E(\gamma_{n+1}\mathcal{E}_{n+1} \,\Big| \, {\cal F}_n) \\
  =\gamma_{n+1}\left[ \E\left( \left. \hatH_{\tn_n}(X_{n+1}) \right|{\cal F}_n\right) -
  P_{\tn_n}\hatH_{\tn_n}(X_{n}) \right]=0
\end{multline*}
In conclusion, $(M_k)_{k \geq 1}$ is a square integrable ${\cal
  F}_k$-martingale.  Since 
\[
\sum_{n}\E((M_{n+1}-M_n)^2|{\cal F}_n)=\sum_{n
}\gamma_{n+1}^2\E(\mathcal{E}_{n+1}^2|{\cal F}_n)
\]
is smaller than
$C\sum_{n\geq 1}\gamma_n^2$ by (\ref{eq:bound:hatH}) and therefore finite with
probability one by (\ref{eq:stepsize:assumptions}), $(M_k)_{k \geq 1}$
converges a.s. by \cite[Theorem 2.15]{hall:heyde:1980}.
% $U_{n+1} = 1/\gamma_{n+1} = \S_n / \pas$: $(U_n)_{n \geq 1}$ is a
% nondecreasing positive sequence by Lemma~\ref{lem:bounds:BigThetaTilde} such
% that $U_{n+1} \in \mathcal{F}_n$. By (\ref{eq:bound:hatH}), there exists a
% constant $C$ such that
% \[
% \sum_k U_k^{-2} \E\left[ |\mathcal{E}_k|^2 \vert \mathcal{F}_{k-1} \right] \leq
% C \, \sum_k \gamma_k^2 \eqsp,
% \]
% and by (\ref{eq:stepsize:properties}), the RHS is finite with probability one.
% Therefore, by \cite[Theorem 2.18]{hall:heyde:1980}, $\sum_n \gamma_{n+1}
% \mathcal{E}_{n+1}$ converges a.s.

We now consider the term $R_{n+1}^{(1)}$. By (\ref{eq:bound:hatH}) and the
monotonic property of $(\gamma_n)_{n \geq 1}$, following the same lines as in
the proof of \cite[Proposition 4.10]{fort:jourdain:kuhn:lelievre:stoltz:2014} we prove that there exists a
constant $C$ such that 
\begin{align*}
\P\left( \forall k,   \sup_{\ell \geq k} \left|\sum_{n=k}^\ell \gamma_{n+1} R_{n+1}^{(1)}
  \right|\leq C \ \gamma_{k+1} \right) =1 \eqsp. 
\end{align*}
Therefore, $\sup_{\ell \geq k} \left|\sum_{n=k}^\ell \gamma_{n+1} R_{n+1}^{(1)}
\right|$ tends to zero a.s. as $k \to \infty$.

We now consider the term
$R_{n+1}^{(2)}$. We have $\hatH_\tn(x) = \sum_n
P_\tn^n(H(\cdot,\tn)-h(\tn))(x)$; by
\cite[Lemma~4.2]{fort:moulines:priouret:2011}, there exists a constant
$C$ which does not depend on $\tn,\tn'$ (thanks to
Proposition~\ref{prop:ergounif}  and the upper bound \linebreak
$\sup_\theta \sup_x |H(\theta,x)|< \infty$) such
that for any $\tn, \tn' \in \Theta$,
\begin{multline*}
  \sup_\Xset \left| P_{\tn}\hatH_{\tn} - P_{\tn'}\hatH_{\tn'} \right| \leq C
  \left(
    \sup_\Xset \left| H(\cdot,\tn) - H(\cdot,{\tn'}) \right|   \right. \\
  \left. + \sup_{x \in \Xset} \| P_\tn(x,\cdot) - P_{\tn'}(x,\cdot) \|_\tv +
    \|\pi_\tn \,\rmd \lambda - \pi_{\tn'} \,\rmd \lambda\|_\tv \right) \eqsp.
\end{multline*}
Then, by \cite[Lemmas 4.6. and 4.7]{fort:jourdain:kuhn:lelievre:stoltz:2014},
there exists a constant $C$ such that for any $\tn, \tn' \in \Theta$
\begin{multline*}
  \sup_\Xset \left| P_{\tn}\hatH_{\tn} - P_{\tn'}\hatH_{\tn'} \right|  \\
  \leq C \left( |\tn - \tn'| + \sum_{i=1}^d \left|1 - \frac{\tn'(i)}{\tn(i)}
    \right| +\sum_{i=1}^d \left|1 - \frac{\tn(i)}{\tn'(i)} \right| \right)
  \eqsp.
\end{multline*}
Since $\sup_{\tn \in \Theta} \sup_{x \in \Xset} |H(x,\tn)| \leq 1$ and
$\P(\sup_n |\Lambda_{n+1}| \leq \gamma_1) =1$, (\ref{eq:theta:approxsto})
implies that $|\tn_{n+1} - \tn_n| \leq (1+\gamma_1) \gamma_{n+1}$ with
probability one.  By (\ref{eq:theta:update:multiplicatif}), for any $i
\in \{1, \cdots, d \}$
\begin{align}
  &\left|1 - \frac{\tn_n(i)}{\tn_{n+1}(i)} \right| \vee \left|1 -
  \frac{\tn_{n+1}(i)}{\tn_n(i)} \right|\notag\\
  & = \frac{|\tn_{n+1}(i)-\tn_{n}(i)|}{\tn_{n}(i) \wedge \tn_{n+1}(i)} \notag \\
  & = \frac{\gamma_{n+1}|\theta_n(I(X_{n+1}))-\un_{\Xset_i}(X_{n+1})|}{1+\gamma_{n+1}(\un_{\Xset_i}(X_{n+1})\wedge\theta_n(I(X_{n+1}))}\leq \gamma_{n+1} \quad \text{a.s.}\label{eq:majoevolrappthet}
\end{align}
This discussion evidences that there exists a constant $C$ such that
\[
\P\left( \forall k, \, \sup_{\ell \geq k} \left|\sum_{n=k}^\ell \gamma_{n+1}
    R_{n+1}^{(2)} \right| \leq C \, (1+\gamma_1) \sum_{n \geq k} \gamma_{n+1}^2 \right) =1
\eqsp.
\]
By (\ref{eq:stepsize:assumptions}), $\sup_{\ell \geq k}
\left|\sum_{n=k}^\ell \gamma_{n+1} R_{n+1}^{(2)} \right|$ tends to zero a.s. as
$k \to \infty$.

\textit{(ii)} The proof follows the same lines as the proof of \cite[Theorem
3.4]{fort:jourdain:kuhn:lelievre:stoltz:2014} and details are omitted. 
The only result which has to be 
adapted is \cite[Corollary 4.8]{fort:jourdain:kuhn:lelievre:stoltz:2014}. Combining \cite[Lemmas~4.6. and 4.7]{fort:jourdain:kuhn:lelievre:stoltz:2014} and \eqref{eq:majoevolrappthet}, we easily obtain the existence of a finite constant $C$
  such that almost surely, for any $n \geq 1$
  \begin{align*}
& \| \pi_{\tn_{n+1}} \, \rmd \lambda - \pi_{\tn_{n}} \, \rmd \lambda
   \|_{\mathrm{TV}} \leq C \gamma_{n+1} \eqsp, \\
     & \sup_{x \in \Xset}  \|P_{\tn_n}(x, \cdot) - P_{\tn_{n+1}}(x,\cdot)
     \|_{\mathrm{TV}} \leq C \gamma_{n+1}
      \eqsp.
   \end{align*}
\textit{(iii)} The proof is very similar to the proof of 
\cite[Theorem~3.5]{fort:jourdain:kuhn:lelievre:stoltz:2014} and is therefore omitted.

\subsection{Proof of Proposition~\ref{lem:bounds:BigThetaTilde}}

Throughout this proof, we denote by 
\[
\S_n = \sum_{i=1}^d \tu_n(i)
\]
the sum of the unnormalized weights. 
In view of~\eqref{eq:def:thetatilde}
\[
\S_{n+1}=\S_{n}+{\pas} \tn_n(I(X_{n+1})) \eqsp.
\]
As $\max_{1\leq i\leq d} \tn_n(i)\leq 1$, a direct induction on $n$ yields
$\S_n\leq \S_0+n \pas$. Since by~\eqref{eq:def:stepsize} 
\begin{equation}
  \gamma_{n+1}=\frac{\pas}{\S_n},\label{lienpassn}
\end{equation} 
the lower bound on $\gamma_{n+1}$ in Proposition~\ref{lem:bounds:BigThetaTilde} follows. To prove the deterministic upper-bound, we remark that
\begin{align}
  \S_{n+1}^2 &=\left(\S_n+\pas
    \frac{\tu_n(I(X_{n+1}))}{\S_n}\right)^2  \nonumber \\
  & \geq \S_n^2+2\pas \tu_n(I(X_{n+1})) \label{evolcnor2} \\
 & \geq \S_n^2+2\pas\min_{1\leq i\leq d}\tu_0(i),\nonumber
\end{align}
where we used that for each $i\in\{1,\ldots,d\}$, the sequence $(\tu_n(i))_{n \geq 0}$
is non-decreasing. By induction on $n$, this implies that $\S_n^2\geq
\S_0^2\left(1+2n\gamma_1\min_{1\leq i\leq d}\tn_0(i)\right)$ and the
deterministic upper-bound follows from \eqref{lienpassn}. 

To prove the stochastic upper-bound,  we suppose  A\ref{hyp:targetpi} and A\ref{hyp:kernel}. The deterministic upper-bound and Proposition \ref{prop:stabilite} ensure that \eqref{lgtk} holds. Let 
\[
\underline{\tu}_n=\min_{1\leq i\leq d}\tu_n(i).
\]
We have $\underline{\tu}_n= \S_n \, \underline{\theta}_n$ for
each $n\in\N$. Moreover the sequence $(\underline{\tu}_n)_{n\in\N}$
is non-decreasing and such that \begin{equation} \forall
  k\in\N^*,\;\underline{\tu}_{T_k}\geq\underline{\tu}_{T_{k-1}}\left(1+\gamma_{T_k}\right),\label{evolmintt}
\end{equation}
with equality when the smallest index of stratum with smallest weight $I_n$ is constant for $T_{k-1}\leq n\leq T_k$.
The inequality is due to the possibility that for some
$n\in\{T_{k-1}+1,T_{k-1}+2,\ldots,T_{k}-1\}$, $X_n\in\Xset_{I_{n-1}}$ and
$\exists i\in\{1,\ldots,I_{n-1}-1,I_{n-1}+1,\ldots,d\}$ such
$\tu_{n-1}(i)<\tu_{n-1}(I_{n-1})+\pas{\theta_{n-1}}(I_{n-1})$ so that
$I_{n}\neq I_{n-1}$. With~\eqref{lgtk} and the lower-bound in
Proposition~\ref{lem:bounds:BigThetaTilde} (recall that $\gamma_1 = \frac{\gamma}{S_0}$), one deduces that
\begin{align*} 
  \underline{\tu}_{T_k} 
  & \geq \underline{\tu}_{0}\prod_{j=1}^k (1+\gamma_{T_j})  \geq \underline{\tu}_{0}\prod_{j=1}^k \left(1 + \frac{\gamma}{S_0 +
    \gamma (T_j -1)}\right)\\
  & \geq \underline{\tu}_{0}\prod_{j=1}^k \left(1 + \frac{\gamma}{S_0 +
    \gamma C_T j}\right)\\
  & = \underline{\tu}_{0}\prod_{j=1}^k\frac{\S_0+\pas+C_T\pas j}{\S_0+C_T\pas j} \\
  &=\underline{\tu}_{0}\frac{\Gamma\left(\frac{\S_0+\pas}{C_T\pas}+k+1\right)}{\Gamma\left(\frac{\S_0}{C_T\pas}+k+1\right)}
  \frac{\Gamma\left(\frac{\S_0}{C_T\pas}+1\right)}{\Gamma\left(\frac{\S_0+\pas}{C_T\pas}+1\right)} \\
  & \sim
  \underline{\tu}_{0}k^{1/C_T}\frac{\Gamma\left(\frac{\S_0}{C_T\pas}+1\right)}{\Gamma\left(\frac{\S_0+\pas}{C_T\pas}+1\right)}\mbox{
    as }k\to+\infty \eqsp.
\end{align*}
Hence there is a positive random variable $C$ such that, for all $k\in \N$,
$\underline{\tu}_{T_k}\geq C(k+1)^{1/C_T}$. Since by \eqref{lgtk},
$T_{\lfloor n/C_T\rfloor}\leq n$ and the sequence $(\underline{\tu}_{n})_{n \geq 0}$ is
non-decreasing, it follows that
\[
\forall n\in\N, \quad \underline{\tu}_{n}\geq\underline{\tu}_{T_{\lfloor n/C_T\rfloor}} \geq C\left(\frac{n}{C_T}\right)^{1/C_T} \quad \mathrm{a.s.}
\]
Since by \eqref{evolcnor2}, $\S_n^2\geq
\S_0^2+2\pas\sum_{j=0}^{n-1}\underline{\tu}_{j}$, this implies that a.s., for
any $n \geq 1$,
\begin{align*}
  \S^2_n & \geq \S_0^2 + 2 \gamma C \sum_{j=0}^{n-1} \left(\frac{j}{C_T}\right)^{1/C_T}\\
  & \geq \S_0^2+2\pas CC_T^{-1/C_T}\int_0^{n-1}x^{1/C_T} \rmd x \\
  & =\S_0^2+2\pas CC_T^{-1/C_T}\frac{C_T}{1+C_T}(n-1)^{\frac{1+C_T}{C_T}}
  \eqsp.
\end{align*}
With \eqref{lienpassn}, one deduces that
\[
\P\left(\sup_{n\in\N}n^{\frac{1+1/C_T}{2}}\gamma_{n+1}<\infty\right)=1.
\]

\subsection{Proof of Proposition~\ref{propshusa}}

Let us consider the SHUS$^\alpha$ algorithm for fixed $\alpha\in
(1/2,1)$. For notational simplicity, we omit the dependence of $\gamma(\alpha)$ on $\alpha$. The proof of Proposition~\ref{propshusa} relies on the next lemma.

\begin{lemma}\label{lem:shusa} 
Under Assumptions A\ref{hyp:targetpi} and A\ref{hyp:kernel}, the
sequence $(S_n)_{n \geq 1} =\left(\sum_{i=1}^d\tu_n(i)\right)_{n\geq 0}$ is increasing and bounded from below by a deterministic sequence $(\underline{s}_n)_{n\geq 0}$ going to $+\infty$ as $n\to\infty$, and satisfies 
\begin{align*}
   \P\bigg(0&<\inf_{n\geq 0}(n+1)^{\alpha-1}\ln(1+S_n)\\&\leq \sup_{n\geq 0}(n+1)^{\alpha-1}\ln(1+S_n)\leq \bar{c}\bigg)=1,
\end{align*} 
where
\[
\begin{aligned}
\bar{c}& = \ln(1+S_0) \\ 
& \times \left(1\vee\left(\left(1+\frac{\gamma}{(\ln(1+S_0))^{\frac{1}{1-\alpha}}}\right)^{\frac{1}{1-\alpha}}\!\!-1\right)^{1-\alpha}\right).
\end{aligned}
\] 
\end{lemma}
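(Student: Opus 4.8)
The plan is to prove the three assertions --- monotonicity of $(S_n)$, the deterministic lower bound $\underline s_n\to+\infty$, and the two-sided control of $(n+1)^{\alpha-1}\ln(1+S_n)$ --- by passing to the logarithmic variable $L_n=\ln(1+S_n)$ and its power $u_n=L_n^{1/(1-\alpha)}$, in terms of which the claimed estimates read $u_n\asymp n$. Monotonicity is immediate: since exactly the stratum $I(X_{n+1})$ is updated at step $n$, the rule \eqref{eq:tu_p} gives $S_{n+1}=S_n+\gamma_{n+1}\tu_n(I(X_{n+1}))$ with $\gamma_{n+1}$ from \eqref{eq:gammanSHUSalpha}, and this increment is strictly positive; in particular $(L_n)$ and $(u_n)$ are nondecreasing, so $u_n\ge u_0=(\ln(1+S_0))^{1/(1-\alpha)}$.

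For the \emph{upper bound} I would first use $\tu_n(I(X_{n+1}))\le S_n$ to get $1+S_{n+1}\le(1+S_n)(1+\gamma_{n+1})$, and then, via $\ln(1+x)\le x$ together with $L_n^{\alpha/(1-\alpha)}=u_n^{\alpha}$, the pathwise recursion $u_{n+1}^{1-\alpha}\le u_n^{1-\alpha}+\gamma u_n^{-\alpha}$. Writing $B=(1+\gamma/u_0)^{1/(1-\alpha)}-1$ and $A=u_0\max(1,B)$, one checks that $A^{1-\alpha}=\bar c$. I would then prove $u_n\le A(n+1)$ by induction: the base case uses $A\ge u_0$, and the step reduces, since $t\mapsto t^{1-\alpha}$ is increasing, to the elementary inequality $(v+A)^{1-\alpha}\ge v^{1-\alpha}+\gamma v^{-\alpha}$ for all $v\ge u_0$. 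As $(v+A)^{1-\alpha}-v^{1-\alpha}=v^{1-\alpha}[(1+A/v)^{1-\alpha}-1]$ and $r\mapsto((1+r)^{1-\alpha}-1)/r$ is nonincreasing (concavity of $r\mapsto(1+r)^{1-\alpha}$), this inequality is tightest at $v=u_0$, where it amounts exactly to $A\ge u_0B$, which holds by definition of $A$. The two branches of the $\max$ are thus precisely what makes both the base case and the inductive step work. Raising $u_n\le A(n+1)$ to the power $1-\alpha$ yields $L_n\le\bar c(n+1)^{1-\alpha}$, i.e. $\sup_n(n+1)^{\alpha-1}L_n\le\bar c$.

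For the deterministic lower bound I would combine $\tu_n(I(X_{n+1}))\ge\underline{\tu}_0=\min_{1\le i\le d}\tu_0(i)>0$ with the upper bound just obtained: $\gamma_{n+1}=\gamma L_n^{-\alpha/(1-\alpha)}\ge\gamma\,\bar c^{-\alpha/(1-\alpha)}(n+1)^{-\alpha}$, so that $S_{n+1}-S_n\ge c_1(n+1)^{-\alpha}$ for a deterministic $c_1>0$. Summing and using $\alpha<1$ gives $S_n\ge\underline s_n=S_0+c_1\sum_{k=1}^n k^{-\alpha}\to+\infty$. Consequently $\gamma_n\le\bar\gamma_n=\gamma[\ln(1+\underline s_{n-1})]^{-\alpha/(1-\alpha)}$ with $\bar\gamma_n\to0$ deterministic, so the hypotheses of Proposition~\ref{prop:stabilite} are satisfied (note $\gamma_n$ is ${\cal F}_{n-1}$-measurable), and \eqref{lgtk} holds: $T_k\le C_T k$ almost surely.

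The bulk of the work is the \emph{sharp lower bound} $\inf_n(n+1)^{\alpha-1}L_n>0$, which I would obtain by the return-time argument of the proof of Proposition~\ref{lem:bounds:BigThetaTilde}. Since the SHUS$^\alpha$ update \eqref{eq:tu_p} has the same multiplicative form as \eqref{eq:SHUS_WL}, iterating the analogue of \eqref{evolmintt} gives $\underline{\tu}_{T_k}\ge\underline{\tu}_0\prod_{j=1}^k(1+\gamma_{T_j})$. Using $T_j\le C_T j$ and the upper bound $L_{T_j-1}\le\bar c\,T_j^{1-\alpha}\le\bar c(C_Tj)^{1-\alpha}$ yields $\gamma_{T_j}\ge c_2 j^{-\alpha}$, whence $\ln\underline{\tu}_{T_k}\ge\ln\underline{\tu}_0+\sum_{j=1}^k\ln(1+c_2 j^{-\alpha})\ge c_3 k^{1-\alpha}$ for large $k$. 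Because $1+S_{T_k}\ge\underline{\tu}_{T_k}$, this forces $L_{T_k}\ge c_3 k^{1-\alpha}$; taking $k=\lfloor n/C_T\rfloor$ (so $T_k\le C_Tk\le n$) and invoking the monotonicity of $(L_n)$ gives $L_n\ge L_{T_k}\ge c_3\lfloor n/C_T\rfloor^{1-\alpha}\gtrsim n^{1-\alpha}$, hence $\inf_n(n+1)^{\alpha-1}L_n>0$ almost surely. The hard part is precisely this last step: the crude deterministic bound $S_n\ge\underline s_n$ only forces $L_n\gtrsim\ln n$, which is far short of the required $n^{1-\alpha}$, so the recurrence property of Proposition~\ref{prop:stabilite} --- guaranteeing that the smallest-weight stratum is revisited at linear rate --- is indispensable, and its hypotheses can only be verified after the upper bound and the deterministic lower bound have been established.
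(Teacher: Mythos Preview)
Your proof is correct and follows the same overall architecture as the paper's: pass to $L_n=\ln(1+S_n)$ and $u_n=L_n^{1/(1-\alpha)}$, obtain the upper bound $u_n\le A(n+1)$ from the one-step recursion, use the resulting deterministic control on $\gamma_n$ to invoke Proposition~\ref{prop:stabilite}, and then exploit the return times $T_k$ and~\eqref{evolmintt} to bound $\underline{\tu}_n$ from below exponentially in $n^{1-\alpha}$.

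There are a few technical differences worth noting. For the upper bound, the paper works directly with $l_n=u_n$ and uses convexity of $x\mapsto(1+x)^{1/(1-\alpha)}$ (the chord inequality on $[0,\gamma/l_0]$) to obtain the \emph{additive} recursion $l_{n+1}\le l_n+l_0B$, whence $l_n\le l_0(1+nB)\le A(n+1)$; you instead run an induction and reduce the step to the concavity inequality $(v+A)^{1-\alpha}\ge v^{1-\alpha}+\gamma v^{-\alpha}$. Both routes produce exactly the same constant~$\bar c$. For the deterministic lower bound $\underline s_n\to\infty$, the paper iterates the map $g(x)=x+\gamma\underline{\tu}_0(\ln(1+x))^{-\alpha/(1-\alpha)}$ without using the upper bound, whereas you first establish the upper bound and then get the more explicit $S_n\ge S_0+c_1\sum_{k\le n}k^{-\alpha}$; your ordering is perfectly legitimate. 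The most interesting divergence is in the sharp lower bound: after obtaining $\ln\underline{\tu}_{T_k}\gtrsim k^{1-\alpha}$, the paper feeds $\underline{\tu}_n$ back into~\eqref{evolsn} and integrates $S_{n+1}-S_n\ge c_0(n+1)^{-\alpha}\mathrm{e}^{c_1(n+1)^{1-\alpha}}$, whereas you simply observe $1+S_{T_k}\ge\underline{\tu}_{T_k}$ and conclude directly. Your shortcut is valid and genuinely streamlines the argument; the paper's extra step is not needed for the lemma as stated.
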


\begin{proof}[of Lemma~\ref{lem:shusa}]
The sequence $(S_n)_{n\geq 0}$ increases according to 
\begin{equation}
  \begin{aligned}
   S_{n+1} &= S_n+\frac{\gamma \tu_n(I(X_{n+1}))}{\left(\ln(1+S_n)\right)^{\frac{\alpha}{1-\alpha}}} \\
   & = S_n\left(1+\frac{\gamma \tn_n(I(X_{n+1}))}{\left(\ln(1+S_n)\right)^{\frac{\alpha}{1-\alpha}}}\right).\label{evolsn}
\end{aligned}
\end{equation} 
Since for $i\in\{1,\hdots,d\}$, the sequence $(\tu_n(i))_{n\geq 0}$ is non-decreasing, one deduces that, for all $n\geq 0$, $S_{n+1}\geq g(S_n)$, where 
\[
g(x)=x+\frac{\gamma\min_{1\leq i\leq d}\tu_0(i)}{(\ln(1+x))^{\frac{\alpha}{1-\alpha}}}
\]
for $x >0$. Let $(\underline{s}_n)_{n\geq 0}$ be defined inductively by $\underline{s}_0=S_0$ and $\underline{s}_{n+1}=g(\underline{s}_{n})$ for all $n\geq 0$. This sequence is increasing and goes to $\infty$ when $n\to\infty$ as $x\mapsto \gamma\left[ \min_{1\leq i\leq d}\tu_0(i)\right](\ln(1+x))^{-\frac{\alpha}{1-\alpha}}$ is locally bounded away from $0$ on $(0,+\infty)$. Moreover $g$ is a convex function which is decreasing on $(0,x_g)$ and increasing on $(x_g,+\infty)$ for some $x_g\in (0,+\infty)$. The minimum of the function~$g$ is therefore attained at~$x_g$. Since 
\[
S_1\geq \underline{s}_1=g(S_0)\geq g(x_g)=x_g+\frac{\gamma\min_{1\leq i\leq d}\tu_0(i)}{(\ln(1+x_g))^{\frac{\alpha}{1-\alpha}}}>x_g,
\]
it follows that $\min(S_n,\underline{s}_n)> x_g$ for any $n\geq 1$, and one easily checks by induction on $n$ that $S_n\geq\underline{s}_n$ for all $n\geq 1$. Unfortunately, this lower-bound is not sharp enough to bound $((n+1)^{\alpha-1}\ln(1+S_n))_{n\geq 0}$ from below by a positive constant (this will proved later on).

To prove that 
\begin{equation}
   \forall n\geq 0,\;\ln(1+S_n)\leq \bar{c}(n+1)^{1-\alpha},\label{minolnsn}
\end{equation} 
we remark that for all $n\in \N$, using $S_{n+1}\geq S_n$ for the first inequality, and $S_{n+1}\leq S_n\left(1+\frac{\gamma}{(\ln(1+S_n))^{\frac{\alpha}{1-\alpha}}}\right)$ for the second one,
\begin{align}
   \ln(1+S_{n+1})&\leq\ln(1+S_n)+\ln\left(\frac{S_{n+1}}{S_n}\right)\notag\\&\leq \ln(1+S_n)+\frac{\gamma}{(\ln(1+S_n))^{\frac{\alpha}{1-\alpha}}}\label{majodifl1+s}.
\end{align}
Therefore, denoting for simplicity $l_n=\left(\ln(1+S_n)\right)^{\frac{1}{1-\alpha}}$ and using the monotonicity of the sequence $(l_n)_{n\geq 0}$ and the convexity of $x\mapsto (1+x)^{\frac{1}{1-\alpha}}$ on $\R_+$ for the second inequality, we get
\begin{align*}
  l_{n+1}&\leq l_n\left(1+\frac{\gamma}{l_n}\right)^{\frac{1}{1-\alpha}}\\
&\leq l_n\left(1+\frac{l_0}{l_n}\left(\left(1+\frac{\gamma}{l_0}\right)^{\frac{1}{1-\alpha}}-1\right)\right).
\end{align*}
This implies by induction on $n$ that
\begin{align*}
   l_n&\leq l_0+nl_0\left(\left(1+\frac{\gamma}{l_0}\right)^{\frac{1}{1-\alpha}}-1\right)\\
&\leq l_0\left(1\vee \left(\left(1+\frac{\gamma}{l_0}\right)^{\frac{1}{1-\alpha}}-1\right)\right)(n+1).
\end{align*}
Raising this inequality to the power $1-\alpha$ leads to \eqref{minolnsn}, which in turn implies the lower bound
\begin{equation}
   \forall n\geq 1,\;\gamma_n\geq \gamma{\bar{c}}^{\frac{-\alpha}{1-\alpha}}n^{-\alpha}\label{minnalpas}.\end{equation}

To bound $(n+1)^{\alpha-1}\ln(1+S_n)$ from below, we are going to adapt the proof of Proposition \ref{lem:bounds:BigThetaTilde}. Since $(\gamma_n)_{n\geq 1}$ is bounded from above by the deterministic sequence $(\gamma(\ln(1+\underline{s}_n))^{-\frac{\alpha}{1-\alpha}})_{n\geq 1}$ which goes to $0$ as $n\to\infty$, Proposition~\ref{prop:stabilite} shows that the sequence $(T_k)_{k\geq 0}$ defined inductively by $T_0=0$ and~\eqref{eq:Tk} satisfies \eqref{lgtk}.
Moreover, \eqref{evolmintt} still holds so that,  using the concavity of the logarithm and the monotonicity of the sequence $(\gamma_n)_{n\geq 1}$ for the second inequality then setting $c=\frac{\gamma\ln(1+\gamma_1)}{C_T^\alpha\gamma_1{\bar{c}}^{\alpha/(1-\alpha)}}$ and using \eqref{minnalpas} and \eqref{lgtk} for the third, one has
\begin{align*}
   \ln \underline{\tu}_{T_k} 
  & \geq \ln \underline{\tu}_{0}+\sum_{j=1}^k \ln (1+\gamma_{T_j}) \\&\geq \ln \underline{\tu}_{0}
+\frac{\ln(1+\gamma_1)}{\gamma_1}\sum_{j=1}^k\gamma_{T_j}\\
&\geq\ln \underline{\tu}_{0}+c\sum_{j=1}^kj^{-\alpha}\\&\geq \ln \underline{\tu}_{0}+\frac{c}{1-\alpha}\left((k+1)^{1-\alpha}-1\right). \end{align*}
With \eqref{lgtk}, one deduces that a.s., for all $n\in \N$,
\begin{align*}
   \underline{\tu}_n&\geq \underline{\tu}_{T_{\lfloor n/C_T\rfloor}} \\&\geq \underline{\tu}_{0}\exp\left(\frac{c}{1-\alpha}\left(\frac{(n+1)^{1-\alpha}}{C_T^{1-\alpha}}-\frac{1}{C_T^{1-\alpha}}-1\right)\right).
\end{align*}
Inserting this lower-bound together with \eqref{minolnsn} into
\eqref{evolsn} and setting $c_0=\frac{\gamma\underline{\tu}_{0}}{{\bar{c}}^{\frac{\alpha}{1-\alpha}}}\exp
  \left(-\frac{c(1+C_T^{1-\alpha})}{(1-\alpha)C_T^{1-\alpha}} \right)$ and $c_1=\frac{c}{(1-\alpha)C_T^{1-\alpha}}$ one gets
\[
\forall n\in\N,\;S_{n+1}\geq S_n+\frac{c_0}{(n+1)^\alpha}\mathrm{e}^{c_1(n+1)^{1-\alpha}}.
\]
Since $x\mapsto x^{-\alpha}\mathrm{e}^{c_1x^{1-\alpha}}$ is increasing for $x\geq \left(\frac{\alpha}{c_1(1-\alpha)}\right)^{\frac{1}{1-\alpha}}$, one deduces that for all $n\geq n_1:=\left\lceil\left(\frac{\alpha}{c_1(1-\alpha)}\right)^{\frac{1}{1-\alpha}}\right\rceil$,
\begin{align*}
   S_n&\geq
   S_{n_1}+\int_{n_1}^nc_0x^{-\alpha}\mathrm{e}^{c_1x^{1-\alpha}} \rmd x\\&=S_{n_1}+\frac{c_0}{c_1(1-\alpha)}\left(\mathrm{e}^{c_1n^{1-\alpha}}-\mathrm{e}^{c_1 n_1^{1-\alpha}}\right)
\end{align*}
so that $\limsup_{n\to\infty}(n+1)^{\alpha-1}\ln(1+S_n)\geq c_1>0$. This concludes the proof since $(n+1)^{\alpha-1}\ln(1+S_n)>0$ for all $n\geq 0$.
\end{proof}

\begin{proof}[of Proposition~\ref{propshusa}]
By Lemma~\ref{lem:shusa}, the sequence $(\gamma_n)_{n\geq 1}$ defined by \eqref{eq:gammanSHUSalpha} is increasing, and bounded from above by the deterministic sequence \linebreak $\left(\gamma(\ln(1+\underline{s}_{n-1}))^{-\frac{\alpha}{1-\alpha}}\right)_{n\geq 1}$ which goes to $0$ as $n\to\infty$. Moreover,
\[
\P\left(\gamma{\bar{c}}^{-\frac{\alpha}{1-\alpha}}\leq \inf_{n\geq 1}n^\alpha\gamma_n\leq\sup_{n\geq 1}n^\alpha\gamma_n <+\infty\right)=1.
\]
Since $\sum_{n \geq 1}\frac{1}{n^\alpha}=+\infty$ and $\sum_{n \geq
  1}\frac{1}{n^{2\alpha}}<+\infty$, one deduces that \eqref{eq:stepsize:assumptions} holds. The convergence is therefore a consequence of Theorem~\ref{theo:cvggal}.

To prove~\eqref{eq:limit_gamma_n_shus_alpha}, we remark that \eqref{evolsn} implies that
\[
\ln(1+S_{n+1})=\ln(1+S_n)+\frac{\gamma\tn_n(I(X_{n+1}))}{(\ln(1+S_n))^{\frac{\alpha}{1-\alpha}}}+R^{(1)}_{n+1},
\]
where, setting $h(x)=\ln(1+x)-x$,
\begin{align*}
   R^{(1)}_{n+1}=&h\left(\frac{\gamma S_n\tn_n(I(X_{n+1}))}{(1+S_n)(\ln(1+S_n))^{\frac{\alpha}{1-\alpha}}}\right)\\&-\frac{\gamma \tn_n(I(X_{n+1}))}{(1+S_n)(\ln(1+S_n))^{\frac{\alpha}{1-\alpha}}}.
\end{align*}
From now on, $C$ denotes a positive random variable which may change from line to line. Lemma \ref{lem:shusa} implies that
\[
\forall n\geq 0,\;\ln(1+S_n)\geq C(n+1)^{1-\alpha}.
\]
Since $0\geq h(x)\geq-x^2/2$ for all $x > 0$ and
\[
\sup_{x>0}\frac{(\ln(1+x))^{\frac{\alpha}{1-\alpha}}}{1+x}<+\infty, 
\]
it follows that 
\[
\forall n\geq 0,\;\left|R^{(1)}_{n+1}\right|\leq c(\ln(1+S_n))^{-\frac{2\alpha}{1-\alpha}}
\]
for some deterministic constant $c\in(0,+\infty)$.
Writing 
\begin{align*}
   (&\ln(1+S_{n+1}))^{\frac{1}{1-\alpha}}=(\ln(1+S_{n}))^{\frac{1}{1-\alpha}}\\&\times\left(1+\frac{\gamma\tn_n(I(X_{n+1}))}{(\ln(1+S_n))^{\frac{1}{1-\alpha}}}+\frac{R^{(1)}_{n+1}}{\ln(1+S_n)}\right)^{\frac{1}{1-\alpha}}
\end{align*}
and remarking that $x\mapsto \frac{1}{x^2}\left|(1+x)^{\frac{1}{1-\alpha}}-1-\frac{x}{1-\alpha}\right|$ is locally bounded on $\R_+$, one deduces that
\[
\begin{aligned}
& (\ln(1+S_{n+1}))^{\frac{1}{1-\alpha}} \\
& =(\ln(1+S_{n}))^{\frac{1}{1-\alpha}}+\frac{\gamma\tn_n(I(X_{n+1}))}{1-\alpha}+R^{(2)}_{n+1},
\end{aligned}
\]
where, for all $n\geq 0$, $\left|R^{(2)}_{n+1}\right|\leq c'(\ln(1+S_n))^{-\frac{\alpha}{1-\alpha}}$ for some deterministic constant $c'\in(0,+\infty)$ depending on $S_0$. Lemma \ref{lem:shusa} ensures the existence of a positive random variable $C$ such that $\forall n\geq 0$, $|R^{(2)}_{n+1}|\leq C(n+1)^{-\alpha}$. One has
\begin{align*}
   \frac{1}{n}\Big(\ln(1+S_{n})\Big)^{\frac{1}{1-\alpha}}=&\frac{1}{n}\Big(\ln(1+S_{0})\Big)^{\frac{1}{1-\alpha}}+\frac{1}{n}\sum_{k=1}^n R^{(2)}_k\\&+\frac{\gamma}{(1-\alpha)n}\sum_{k=1}^n\tn_{k-1}(I(X_{k})).
\end{align*}
The first term in the right-hand side converges a.s. to $0$ as $n\to\infty$. So does the second since 
\[
\frac{1}{n}\sum_{k=1}^nk^{-\alpha}\leq \frac{1}{n}\int_0^nx^{-\alpha}\rmd x=\frac{n^{-\alpha}}{1-\alpha}.
\]
The choice $f\equiv 1$ in \eqref{theo:ergoLLN} ensures that the third term converges a.s. to $\frac{\gamma}{d(1-\alpha)}$. One concludes that $n^\alpha \gamma_{n+1}=\gamma\left(\frac{1}{n}(\ln(1+S_{n}))^{\frac{1}{1-\alpha}}\right)^{-\alpha}$ converges a.s. to $\gamma\left(\frac{\gamma}{d(1-\alpha)}\right)^{-\alpha}$.

\end{proof}

\end{document}